\newtheorem{theorem}{Theorem}[section]
\newtheorem{lemma}[theorem]{Lemma}
\newtheorem{remark}[theorem]{Remark}
\newcommand{\bd}{\mathrm{d}}
\newcommand{\vdot}{v\!\cdot\!\nabla_x}
\newcommand{\sdot}{\!\cdot\!}
\newcommand{\m}{\dm}
\newcommand{\bdv}{\dm \bd v}
\newcommand{\dm}{\mathrm{M}}
\newcommand{\divg}{\mathrm{div}}
\newcommand{\Q}{\mathcal{Q}}
\newcommand{\timess}{\!\times\!}
\newcommand{\curl}{\nabla\!\!\times\!\!}
\newcommand{\reps}{\tfrac{1}{\epsilon}}
\newcommand{\seps}{\sqrt{\epsilon}}
\newcommand{\repst}{\tfrac{1}{\epsilon^2}}
\newcommand{\llv}{L^2_{v,\Lambda}}
\newcommand{\dt}{\frac{\mathrm{d}}{2 \mathrm{d}t}}
\newcommand*{\rom}[1]{\expandafter\@slowromancap\romannumeral #1@}
\newcommand{\bdp}{\mathcal{P}}
\newcommand{\intps}{\int_{\scriptstyle \mathbb{R}^3}\!\int_{\scriptstyle \mathbb{R}^3} }
\newcommand{\intv}{\int_{\mathbb{R}^3}}
\newcommand{\intt}{\int_{\mathbb{R}^3}}
\newcommand{\tdt}{\frac{\mathrm{d}}{ \mathrm{d}t}}
\newcommand{\lrf}{\tfrac{ \epsilon E_\epsilon +  v\times B_\epsilon}{\epsilon \m}}
\newcommand{\hd}{\mathcal{D}^s_\epsilon(t) \sqrt{\mathcal{H}_\epsilon^s}(t)}
\title[ VMB]{ From two species Vlasov-Maxwell-Boltzmann system to magnetohydrodynamics system }
\author{Xu Zhang}
\address[X. Zhang]{\newline School of Mathematics and Statistics, zhengzhou University, Zhengzhou, 450001, P. R. China}
\email{xuzhang889@zzu.edu.cn}
\begin{document}
	\begin{abstract}
  In this work, the magnetohydrodynamics system is formally derived from two species Vlasov-Maxwell-Boltzmann system. By employing the hypocoercivity of the linear Boltzmann  operator and overcoming the difficulties resulting from the singular Lorentz term, we first obtain the  uniform estimates of solutions with respect to the Knudsen number and then derive the magnetohydrodynamics system  from the dimensionless Valsov-Maxwell-Boltzmann system.

\noindent {Keywords: Vlasov-Maxwell-Boltzmann equation; diffusive limit; magnetohydrodynamics }
	\end{abstract}
\maketitle

\section{Introduction and Motivation}
The plasma where the charged particles in dilute gas  move under the influence of the  self-consistent electromagnetic field and collisions can be described by the Vlasov-Maxwell-Boltzmann (VMB) system. For  two species case with equal mass and charge (e.g. anion and cation), the VMB system on the torus in the three dimension is
\begin{align}
\label{vmb}
\begin{cases}
\partial_t   f^+ + \vdot f^+  +  (   E  +    v\timess B ) \sdot\nabla_v f^+  =    \Q(f^+ ,f^+ ) +   \Q(f^+ ,f^- ), \\
 \partial_t   f^-  + \vdot f^-  - (    E  +    v\timess B ) \sdot\nabla_v f^-  =    \Q(f^- ,f^+ ) +  \Q(f^- ,f^- ), \\
 \varepsilon_0 \mu_0\partial_t   E  - \curl   B  = - \mu_0 \intv (f^+ - f^-) v \bd v,\\
\partial_t   B  + \curl   E  =0,\\
\divg   B  =0,~~ \divg   E  = \tfrac{1}{\epsilon_0}\intv (f^+  - f^- ) \bdv.
\end{cases}
\end{align}
In the above system, $f^\pm=f^\pm(t,x,v)(x \in \mathbb{T}^3, ~v \in \mathbb{R}^3)$   are the number density of anion or cation respectively. $B$ and $E$ are electric and magnetic field. The $\Q(f^\pm, f^\pm)$ describe the collision between $f^\pm$ and $f^\pm$ and will be detailed later. In \eqref{vmb}, the first two equations  are kinetic equations and describe that the anions or cations move along their trajectories under the Lorentz force ($ E +    v\timess B) \sdot\nabla_v f^\pm$) and collisions.  The third and forth equations are Maxwell's system which  models how electromagnetic field are self-generated. Specially,  $\varepsilon_0$  and $\mu_0$ are the electric permittivity and magnetic permeability respectively. Furthermore, $\varepsilon_0$  and $\mu_0$ satisfy
\[ c^2 \varepsilon_0 \mu_0 =1,~~c~\text{is the speed of light.}  \]
The couplling of kinetic  system and Maxwell system shows that the moving charged particles in electro-magetic field change the status of the current which in turn affect the creation of the field.

The collission operator $\Q$
\begin{equation}\label{collision-original}
\begin{split}
\Q(f,g)=&\int_{\mathbb{R}^3\times\mathbb{S}^{2}}(f'g'_*-fg_*) b(v-v_*,\omega) \bd v_*\bd\omega\,,
\end{split}
\end{equation}
with $f'=f(v')$, $g'_*= g(v'_*)$, $g_*= g(v_*)$. Here, $v$ and $v_*$ are the velocities of two particles before the   elastic collision and $v'$ and $v_*'$ denotes  velocities after collision. Moreover, the velocities of particles satisfy
\begin{align*}
\begin{cases}
v+ v_* = v' + v'_*, \\
|v|^2 + |v_*|^2 =    |v'|^2 + |v'_*|^2\, ,
\end{cases}
\end{align*}
and
\begin{align*}
\begin{cases}
v' = \frac{v + v_*}{2} + \frac{|v-v_*|}{2}\omega \\
v_*'= \frac{v + v_*}{2} - \frac{|v-v_*|}{2}\omega\,, ~~\omega   \in \mathbb{S}^2.
\end{cases}
\end{align*}

In this work, the cross section $b$  is assumed to be hard potential, i.e, the exists constant $C_\Phi$ such that
\[ b(v-v_*,\omega)= C_\Phi |v-v_*|m(\cos\hat{\theta}),~~\cos\hat\theta = \langle \tfrac{v-v_*}{|v-v_*}, \omega\rangle. \]
Moreover, there exists some positive constant $C_b$ such that
\[ \forall z \in [-1,1],~~ |m(z)| \le C_b, ~~ |m'(z)| \le C_b. \]
Furthermore, $\Q(f,f)$ enjoys the following properties:
\[ \intv \Q(f,f) \bd v =0,~~\intv\Q(f,f)v\bd v =0,~~\intv \Q(f,f) \tfrac{|v|^2-3}{2} \bd v =0.   \]

From the point view of physics, the small parameters (such as Debye number and  Knudsen number) play a key role in describing the transition phenomenon. The Knudsen number which is defined as the ratio of the mean free path  length of the molecular to  representative physical length scale determines whether model of statistical mechanics or continuum mechanics is used.   From the point view of physics, while the { Knudsen number} is very small, the dilute gas under consideration are  fluid regimes. In this work, we only consider the incompressible fluid regimes where the dynamics behavior of the charged particles are described by Navier-Stokes type equations. According to different kinds of scalings of the electromagnetic field in the VMB system, the electric and magnetic field may vanish or preserve in the limiting fluid system.
The limiting fluid system could be Navier-Stokes (NS) euations (the affect of electromagnetic is negligible), Navier-Stokes-Poisson (NSP) system (the affect of the magetic field is negligible), resistive magnetohydrodynamics (MHD) system (the affect of the electric field is negligible) and Navier-Stokes-Maxwell (NSW) system (both fields are important).  The formal derivation of NS, NSP and NSW system from VMB system can be found in \cite{diogosrm-2019-vmb-fluid}. The dimensionless VMB system to MHD system  can be found in \cite{jama2012siam}.

In this work, we concern with the rigorous justification work.  Based on our knowledge,  the Navier-Stokes-Maxwell, Navier-Stokes-Poisson and Navier-Stokes` limits of VMB system have been rigorously verified in the current literature (see \cite{vmbtonswh,vmbtonswu,mvmbtothree,vmbtonsp} for instance). The Navier-Stokes-Poisson limit  of two species VMB system was verified in \cite{vmbtonsp} where the magnetic field in VMB system disappears in the limiting system. The Navier-Stokes-Maxwell  limit   can be found in \cite{vmbtonswh,vmbtonswu} where both electric field and magnetic field preserve in the limiting system. The resistive magnetohydrodynamics system is also a very important model in describing the status of plasma and has not been rigorously derived from VMB system. Besides, there exist new technical difficulties during the justification.  The concern of this work is to justify the transition of system \eqref{vmbtomhd} to the following  MHD system:
\begin{align}
\label{mhddd}
\begin{cases}
\partial_t u + u\sdot \nabla u - \nu \Delta u + \nabla P = (\curl B) \timess B,\\
\partial_t \theta + u \sdot\nabla \theta - \kappa \Delta \theta =0,\\
\divg u = \divg B= 0,~~ \rho + \theta =0,\\
\partial_t B  - \tfrac{1}{\sigma} \Delta B -\curl(u \times B) =0.
\end{cases}
\end{align}
In \eqref{mhddd}, $u$ is the velocity of fluid. $\rho$ and $\theta$ are density and temperature respectively. $P$ is the pressure.
The positive constants $\nu,~~\kappa$ and $\sigma$ have an explanation from the point view of  the kinetic system (see \eqref{nusigma}).   Compared to the Maxwell system in \eqref{vmbtomhd} where there exist both electric and magnetic field, only magnetic field preserves as the Knudsen go to zero.

The formal derivation of MHD equations  from VMB system was  performed in \cite{jama2012siam}. But the Lorentz force term in their dimensionless system is extremely singular and hard to be controlled. In what follows,  we try to derive a new dimensionless VMB system to overcome this difficulty. To deduce the MHD system from the VMB system, we need to perform the dimensionless analysis to system \eqref{vmb}.  The dimensionless Boltzmann equation (see \cite{saint2009book} and \cite{diogosrm-2019-vmb-fluid} for instance) is
\[  \mathrm{St} \partial_t f + \vdot f = \tfrac{1}{\mathrm{Kn}}\Q(f,f), \]
where $\mathrm{St}$ is the { Strouhal number} and $\mathrm{Kn}$ is the { Knudsen number}.
In the incompressible regimes,
\[ \mathrm{St} = \mathrm{Kn} = \epsilon. \]
For the dimensionless analysis, we first introduce the new variables $\tilde{t},~\tilde{x},~\tilde{v}$ with
\[ t =  \tfrac{\tilde{t}}{\epsilon},~ x =  \tfrac{\tilde{x}}{\epsilon},~v =  \tfrac{\tilde{v}}{\epsilon}. \]
Inspired by \cite{jama2012siam}, to get the MHD system, the electric permittivity $\epsilon_0$ is assumed to be very small and magnetic permeability $\mu_0$ be $O(1)$ (in fact, $\mu_0$ is a small constant, withou loss of generality, we set it be one), i.e.,
\[  \epsilon_0 = \epsilon,~~\mu_0 = 1. \]
Under the above setting, the speed of light tends to infinity as $\epsilon$ goes to zero. Since we are performing the dimensionless analysis, this unrealistic discrepancy  should be understood  as asymptotic regimes where proper physical approximations are still valid.

Then, the dimensionless functions for  \eqref{vmb} are defined as follows:
\begin{align*}
{f}^\pm(t,x,v)= \epsilon^3 {f}^\pm_\epsilon(\tilde{t}, \tilde{x}, \tilde{v}), ~~E(t,x) = {E}_\epsilon(\tilde{t},\tilde{x}),~~B(t,x) = {B}_\epsilon(\tilde{t},\tilde{x}),\\
\tilde{\Q}(\tilde{f}, \tilde{f}) = \int_{\mathbb{R}^3\times\mathbb{S}^{2}}(\tilde{f}' \tilde{f}'_*- \tilde{f}\tilde{f}_*) \tilde{b}(\tilde{v}-\tilde{v}_*,\omega) \bd \tilde{v}_*\bd\omega,~~ ~~{b}(v,\omega)=\reps {\tilde{b}}(\tilde{v},\omega).
\end{align*}
Plugging the above relations into \eqref{vmb} and drop the tilde, we can obtain that
\begin{align}
\label{vmbtomhd}
\begin{cases}
\epsilon \partial_t   f^+_\epsilon + \vdot f^+_\epsilon +  (  \epsilon E_\epsilon +    v\timess B_\epsilon) \sdot\nabla_v f^+_\epsilon =  \reps \Q(f^+_\epsilon,f^+_\epsilon) + \reps \Q(f^+_\epsilon,f^-_\epsilon), \\
\epsilon \partial_t   f^-_\epsilon + \vdot f^-_\epsilon - (  \epsilon E_\epsilon +    v\timess B_\epsilon) \sdot\nabla_v f^-_\epsilon =  \reps \Q(f^-_\epsilon,f^+_\epsilon) + \reps \Q(f^-_\epsilon,f^-_\epsilon), \\
\epsilon\partial_t   E_\epsilon - \curl   B_\epsilon = - \tfrac{ 1}{\epsilon^2} \intv (f_\epsilon^+ - f_\epsilon^-) v \bd v,\\
\partial_t   B_\epsilon + \curl   E_\epsilon =0,\\
\divg   B_\epsilon =0,~~ \divg   E_\epsilon = \tfrac{1}{\epsilon^2}\intv (f^+_\epsilon - f^-_\epsilon) \bdv.
\end{cases}
\end{align}
Before comparing the difference between \eqref{vmbtomhd} and the dimensionless VMB system in \cite[Equ.(3.10) ]{jama2012siam}, denoting
\[ F_\epsilon = f^+_\epsilon + f^-_\epsilon,  H_\epsilon= f^+_\epsilon - f^-_\epsilon,  \]
then \eqref{vmbtomhd} can be rewritten as
\begin{align}
\label{vmbsum}
\begin{cases}
\epsilon\partial_t F_\epsilon + \vdot F_\epsilon +  ( \epsilon   E_\epsilon +    v\timess   B_\epsilon) \sdot\nabla_v  H_\epsilon = \reps \Q(F_\epsilon,F_\epsilon) \\
\epsilon \partial_t  H_\epsilon + \vdot  H_\epsilon +  ( \epsilon   E_\epsilon +   v\timess   B_\epsilon) \sdot\nabla_v F_\epsilon = \reps \Q( H_\epsilon,F_\epsilon)  \\
\epsilon\partial_t   E_\epsilon - \curl   B_\epsilon = - \repst   \intv  H_\epsilon v \bd x, \\
  \partial_t    B_\epsilon + \curl   E_\epsilon =0,\\
\divg  \ B_\epsilon =0,~~\divg   E_\epsilon = \repst \intv  H_\epsilon \bd v.
\end{cases}
\end{align}
From \cite{jama2012siam},  the counterpart of \eqref{vmbsum} in their work is as follows
\begin{align}
\label{vmbjm}
\begin{cases}
\epsilon\partial_t F_\epsilon + \vdot F_\epsilon +  ( \epsilon   E_\epsilon +    v\timess   B_\epsilon) \sdot\nabla_v  G_\epsilon = \reps \Q(F_\epsilon,F_\epsilon) \\
\epsilon \partial_t  G_\epsilon + \vdot  G_\epsilon + \repst ( \epsilon   E_\epsilon +   v\timess   B_\epsilon) \sdot\nabla_v F_\epsilon = \reps \Q( G_\epsilon,F_\epsilon)  \\
\epsilon \mu_0 \partial_t   E_\epsilon - \curl   B_\epsilon = -  \mu_0 \intv  G_\epsilon v \bd x, \\
  \partial_t    B_\epsilon + \curl   E_\epsilon =0,\\
\divg  \ B_\epsilon =0,~~\divg   E_\epsilon = \reps \intv  G_\epsilon \bd v.
\end{cases}
\end{align}
In the above system, $G_\epsilon$ is the difference of $f_\epsilon^+$ and $f_\epsilon^-$. Compared to the equations of $G_\epsilon$ in \eqref{vmbjm} and $H_\epsilon$ in \eqref{vmbsum},  there exists one more $\repst$ before the Lorentz force in the equation of $G_\epsilon$. In Sec.\ref{sec-results}, we shall show  the Lorentz force in  system \eqref{vmbtomhd} is also very singular and barely  controlled. With one more $\repst$, the method of this work completely fails. The rigorous justification of \eqref{vmbjm} remains open.

The goal of this work is to derive the MHD system \eqref{mhddd} from the dimensionless system \eqref{vmbtomhd} in the classic solution framework. The key ingredient of this work is the uniform estimates of solutions to \eqref{vmbtomhd} with respect to the Knudsen number.  This is the first rigorous derivation work of MHD system from VMB equations.  This work can be seen as  kinetic approach to \cite{twonstomhd} where the MHD system is derived rigorously from the two fluid Navier-Stokes-Maxwell system in weak solution framework. We follows the ``mixed norm'' strategies used in used in \cite{briant-2015-be-to-ns,mouhotneumann-2006-decay} to obtain the uniform estimates. The advantage of this framework is to recover the dissipative estimates of  macroscopic part within less pages. But their strategies can not directly used to this work. Indeed, the works \cite{briant-2015-be-to-ns,mouhotneumann-2006-decay} are just for Boltzmann equation. In this work, we consider the VMB system. There exists very singular Lorentz force term. Furthermore, the norm used in \cite{briant-2015-be-to-ns} is anisotropic. It makes the Lorentz term harder to be controlled.  More details will be explained in Remark \ref{remark-diff} and Sec. \ref{sec-difficulty}.

The  rest part of this work is arranged as follows. Section \ref{sec-settings-main} is devoted to the notations, the assumption of the linear Boltzmann operator and the assumption of the initial data. The main results and the difficulties of this work  can be found in Section \ref{sec-results}. The Section \ref{secEstimates} consists in deducing the uniform estimates of solutions. The MHD limit of \eqref{vmbtomhd} is verified in Sec.\ref{sec-limit}.

\section{Preliminaries}
\label{sec-settings-main}

Firstly, we try to deduce the fluctuations system of  \eqref{vmbtomhd}.
Letting $\dm $ be the global Maxwellian, under the Navier-Stokes scalings for $G_\epsilon$ and $F_\epsilon$,
\[ F_\epsilon = \m ( 1 + \epsilon f_\epsilon ),~~  H_\epsilon = \epsilon \m  h_\epsilon, \]
and denoting
\[  j_\epsilon = \reps \intv h_\epsilon v \bdv,~n_\epsilon = \intv h_\epsilon \bdv,  \]
we can infer  the fluctuation system:
\begin{align}
\label{vmbtwolinear}
\begin{cases}
\partial_t f_\epsilon +  \reps \vdot f_\epsilon  +   \repst \mathcal{L}(f_\epsilon)  = E_\epsilon\sdot v \sdot    h_\epsilon -  (   E_\epsilon + \reps v \times B_\epsilon) \sdot \nabla_v    h_\epsilon + \reps \Gamma(f_\epsilon, f_\epsilon), \\
\partial_t  h_\epsilon +  \reps \vdot  h_\epsilon   - \repst  E_\epsilon \sdot v +   \repst \mathsf{L}( h_\epsilon)  =E_\epsilon\sdot v \sdot    f_\epsilon -  (   E_\epsilon + \reps v \times B_\epsilon) \sdot \nabla_v    f_\epsilon + \reps \Gamma(h_\epsilon, f_\epsilon), \\
\epsilon \partial_t E_\epsilon - \curl B_\epsilon = -   j_\epsilon , \\
   \partial_t B_\epsilon + \curl E_\epsilon =0,\\
\divg B_\epsilon =0,~~\epsilon \sdot \divg E_\epsilon =  n_\epsilon,
\end{cases}
\end{align}
where the linear Boltzmann operator $\mathcal{L}$ and $\mathsf{L}$ are defined as follows
\begin{align*}
- \mathsf{L}(w)&  = (\m)^{-1}\Q(\m w, \m), ~-\mathcal{L}(w)  = (\m)^{-1}\left( \Q(\m w, \m) + \Q(\m, \m  w)\right).
\end{align*}

By simple computation,  \eqref{vmbtwolinear} satisfies the following global conservation laws:
\begin{align}
\label{conservationlaws}
\begin{split}
\tdt \intt \left(u_\epsilon +  \epsilon E_\epsilon \times B_\epsilon\right)(t) \bd x = 0,\\
  \tdt \intt \left(\theta_\epsilon +   \epsilon \sdot \tfrac{ \epsilon |E_\epsilon|^2 + |B_\epsilon|^2}{3} \right)(t) \bd x  =0,\\
   \tdt \intt \rho_\epsilon(t) \bd x = \tdt \intt n_\epsilon(t) \bd x =0,~~\tdt \intt B_\epsilon(t) \bd x= 0,
\end{split}\end{align}
where
\[\rho_\epsilon = \intv f_\epsilon \bdv,~~u_\epsilon = \intv f_\epsilon v \bdv,~~\theta_\epsilon = \intv f_\epsilon \left( \tfrac{|v|^2-3}{3}\right) \bdv,~~ n_\epsilon = \intv h_\epsilon \bdv.   \]
\subsection{Notations and Terms}
 $\nabla^i_x f = \partial_{{x_1}}^{i_1} \partial_{x_2}^{i_2} \partial_{x_3}^{i_3} f$({\small $ \sum\limits_{k=1}^3i_k = i$})  is   the $i$-$th$ derivative  of $f$ with respect to $x$ .  we denote by $\nabla_x f$  the gradient of scalar function $f$.    $\nabla_v^i f$ and $\nabla_v f$ can be defined in the same way.  The Sobolev norm of $f$ are defined like this:
\begin{align*}
&\|f\|_{L^2_v}^2 =  \intv f^2 \bdv ,   ~~\|f\|_{L^2}^2 =  \intps f^2 \bdv \bd x,\\
& \|f\|_{H^s_x}^2 =  \sum\limits_{k=0}^s\|\nabla^k_x f\|_{L^2}^2,  \|f\|_{H^s}^2 =  \sum\limits_{k=0}^s \sum\limits_{i + j =k}\|\nabla^i_x \nabla^j_v f\|_{L^2}^2.
\end{align*}
Denoting $\hat{v}= \sqrt{1 + |v|}$,   the norms with weight on $v$ are defined as follows:
\begin{align*}
& \|f\|_{L^2_\Lambda}^2 = \|f\hat{v}\|_{L^2}^2,~ \|f\|_{H^s_{\Lambda_x}}^2 =  \sum\limits_{k=0}^s\|\nabla^k_x f {\hat{v}}\|_{L^2}^2,\\
& \|f\|_{\llv}^2 =  \intv f^2 \hat{v}^2\bdv
,~~\|f\|_{H^s_\Lambda}^2 =  \sum\limits_{k=0}^s \sum\limits_{i + j =k}\|\nabla^i_x \nabla^j_v f\|_{L^2_\Lambda}^2.
\end{align*}
Moreover, the positive constant $C$ is independent of $\epsilon$ and  different from lines to lines.
 $a \lesssim b$ means that there exists some positive constant $C$  such that $a \le C b$. We also use $C_0$ to indicate that the constant is dependent of the initial data.

\subsection{Assumption on the linear operators}
\label{sec-assump-on-L}

This section is on hypocoercivity
theory of the linear Boltzmann operator.  The assumptions in this subsection are the same to those in \cite{mouhotneumann-2006-decay} and \cite{briant-2015-be-to-ns}.  The verification of the assumption of those assumptions can be found in \cite[Sec.5.4]{mouhot-2006-homogeneous}.

\noindent {\bf H1}~( Coercivity and general controls.)  The Boltzmann operator $\mathcal{L}$ and $\mathsf{L}$ are self-joint operator from $L^2_v$ to $L^2_v$ with the following decomposition
\begin{align}
\label{assump-h1-h}
\mathcal{L} = -\mathbf{K} + \Lambda,~~ \mathsf{L} = -\mathbf{\Phi} + \Lambda,
\end{align}
where $\Lambda$ is a coercive operator. Furthermore, $\Lambda$ satisfies the following properties.
\begin{itemize}
\item  {For any $h, g \in L^2_v$,  there exists some $\lambda_0>0$ such that
\begin{align}
\label{assump-h-h-h}
\lambda_0 \|h\|_{\llv}^2 \le \intv \Lambda(h)\sdot h \bdv   \le C \|h\|_{\llv}^2,
\end{align}

and}
\begin{align}
\vert\intv  \Lambda(h)\sdot g\bdv  \vert \le C \|h\|_{\llv}\|g\|_{\llv}.
\end{align}
\item With respect to the derivative of $v$, the operator $\Lambda$ admits ``a defect of coercivity'', i.e.,
there exist some strictly  positive  constant  $1>\delta>0$ and $C_\delta$   such that
\begin{aligno}
\label{constant-a3-a4-hi}
\intps \nabla_{v}^i \nabla_x^j \Lambda(h)\sdot\nabla_{v}^i \nabla_x^j h\bdv \bd x \geqslant \delta \left\|\nabla_{v}^i \nabla_x^j h \right\|_{L^2_\Lambda}^{2}-C_\delta\|h\|_{H^{i+j-1}}^{2},~~i \ge 1.
\end{aligno}
\end{itemize}

\noindent {\bf H2}~( Mixing property in velocity.) This assumptions are about $\mathcal{L}$ and $\mathsf{L}$ .  For any $1>\delta>0$, there is some   constant $C_\delta>0$ such that
\begin{aligno}
&\vert \intps \nabla_{v}^i \nabla^j_x  \mathbf{K}(h) \sdot \nabla_{v}^i \nabla^j_x h\bdv  \vert+\vert \intps \nabla_{v}^i \nabla^j_x  \mathbf{\Phi}(h) \sdot \nabla_{v}^i \nabla^j_x h\bdv  \vert \\
&\quad  \leqslant C_\delta\|h\|_{H^{i+j-1}}^{2}+\delta\left\|\nabla_{v}^i \nabla^j_xh\right\|_{L^2_\Lambda}^{2},~~ i \ge 1.
\end{aligno}

\noindent {\bf H3} (Relaxation to the local equilibrium.)
The operators $\mathcal{L}$  and   $\mathsf{L}$ are closed and self-adjoint operators in $L^2_v$ space.
 Moreover, $$ \mathrm{Ker}\mathcal{L} = \mathrm{Span}\{1, v_1,v_2,v_3, \tfrac{|v|^2-3}{2}\},~~\mathrm{Ker}\mathcal{L} = \mathrm{Span}\{1\}.$$
Furthermore, $\mathcal{L}$ and $\mathsf{L}$ satisfy ``local coercivity assumption'':
\begin{align}
\label{a2}
\begin{split}
&  \intv \mathcal{L}(g) \sdot g \bdv \ge  \|g - \bdp g\|_{\llv}^2,   \intv \mathsf{L}(h) \sdot h \bdv \ge  \|h - \bdp h\|_{\llv}^2,
\end{split}
\end{align}
where $\bdp$ is the projection operator of $\mathcal{L}$ and $\mathsf{L}$ onto their kernel space respectively, i.e.,
\begin{aligno}
\bdp g = \intv g \bdv + v\sdot \intv g v \bdv + \tfrac{|v|^2 -3}{2} \intv g \tfrac{|v|^2 -3}{3} \bdv,~~\bdp h = \intv h \bdv.
\end{aligno}
Besides, we also assume that
\begin{align}
\vert \intv f \sdot \mathcal{L}(g) \bdv \vert  \le C \|f\|_{\llv} \|g\|_{\llv},~~\vert \intv f \sdot \mathsf{L}(g) \bdv \vert  \le C \|f\|_{\llv} \|g\|_{\llv},~~\forall f,~g \in L^2_{\llv}.
\end{align}

\noindent {\bf H4}~(Control on the second order operator.) This assumption is on $\Gamma(g,g)$ and $\Gamma(g,h)$.
\begin{itemize}
\item  {For any $g, h\in L^2$,  $\Gamma(g,g) \in \mathrm{Ker}(\mathcal{L})^\perp$,~~ $\Gamma(g,h) \in \mathrm{Ker}(\mathsf{L})^\perp$.}
\item  {For the non-linear operator and $s \ge 3$  }
\begin{align}
\label{constant-cn}
\begin{split}
&\vert \intps \nabla^s_x \Gamma(g,h) \sdot f \bdv \bd x \vert \lesssim \|(g,h)\|_{H^s_{x}} \|(g,h)\|_{H^s_{\Lambda_x}}\|f^\perp\|_{L^2_\Lambda},\\
&\vert \intps \nabla^j_x \nabla_v^i \Gamma(g,h) \sdot f \bdv \bd x \vert \lesssim \|(g,h)\|_{H^s} \|(g,h)\|_{H^s_{\Lambda }}\|f\|_{L^2_\Lambda},~~i\ge 1,~~ s = i+j.
\end{split}
\end{align}
\end{itemize}

\begin{remark}
\label{remark-one}
 In the general case, the lower bound $\lambda_0$ is determined by the collision frequency in  \eqref{assump-h-h-h}. To avoid for using too many notations and without loss of generality, we assume the lower bound $\lambda_0$ to be one in the rest part of this paper.

\end{remark}

\subsection{Assumption on the initial data}
Recalling
\begin{align*}
&\rho_\epsilon = \intv f_\epsilon \bdv,~ u_\epsilon = \intv f_\epsilon v\bdv,~\theta_\epsilon = \intv \left( \tfrac{|v|^2}{3} -1\right) f_\epsilon \bdv,~n_\epsilon = \intv  h_\epsilon \bdv,
\end{align*}
similar to \cite{briant-2015-be-to-ns,guo-2003-vmb-invention}, we can assume the initial data
\begin{align}
\label{estMeaninitial}
\begin{split}
\intt \left(u_\epsilon +  \epsilon E_\epsilon \times B_\epsilon\right)(0) \bd x = 0,\\
  \intt \left(\theta_\epsilon +   \epsilon \sdot \tfrac{ \epsilon |E_\epsilon|^2 + |B_\epsilon|^2}{3} \right)(0) \bd x  =0,\\
    \intt \rho_\epsilon(0) \bd x = \intt n_\epsilon(0) \bd x =0,~~\intt B_\epsilon(0) \bd x= 0.
\end{split}
\end{align}
The assumption \eqref{estMeaninitial} means that the initial data of \eqref{vmbtomhd} are with the same mass, total momentum and energy to the steady state $(\m, \m, 0, 0)$. Furthermore, from \eqref{conservationlaws}, we can infer that the solution preserves these properties all the time.

\section{ Main results}
\label{sec-results}

Define
\begin{align*}
\mathcal{H}_\epsilon^s(t) := \|(f_\epsilon,  g_\epsilon, B_\epsilon, \sqrt\epsilon E_\epsilon)\|_{H^s_x}^2 + \epsilon^2\|(\nabla_v f_\epsilon, \nabla_v  g_\epsilon)\|_{H^{s-1}}^2,\\
\mathcal{D}_\epsilon^s(t): = \|(   f_\epsilon,     h_\epsilon)\|_{H^{s}_\Lambda}^2 +   \|(E_\epsilon, B_\epsilon)\|_{H^{s-1}_{ x}}^2 +  \repst \|(   f_\epsilon^\perp,     h_\epsilon^\perp)\|_{H^{s}_{\Lambda_x}}^2 + \reps \|n_\epsilon\|_{H^{s-1}}^2.
\end{align*}

\begin{theorem}
\label{theoremexi}
Under the assumption  in the section \ref{sec-assump-on-L} and the assumption \eqref{estMeaninitial} on the initial data,    there exists some small enough constant $c_0$  such that  if  the initial data $(f_\epsilon(0),  g_\epsilon(0), E_\epsilon(0), B_\epsilon(0))$ satisfy
\[{H}_\epsilon^s(0) \le c_0,~~s \ge 3,  \]
then system \eqref{vmbtwolinear} admit a unique global classic solution $(f_\epsilon,  h_\epsilon, B_\epsilon, E_\epsilon)$  satisfying for any $t>0$
\begin{align}
\label{esttheowhole}
\sup\limits_{ 0 \le s \le t}  {H}_\epsilon^s(t) +   \tfrac{1}{2} \int_0^t \mathcal{D}^s_\epsilon(\tau)\bd \tau \le \tfrac {c_u}{c_l}  {H}_\epsilon^s(0),~~\forall \epsilon \in  (0,1],
\end{align}
where $c_l$ and $c_u$ are positive constants only dependent of the Sobolev embedding constant.
\end{theorem}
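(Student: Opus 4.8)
The plan is to prove the uniform estimate \eqref{esttheowhole} by a standard continuity/bootstrap argument built on top of an $\epsilon$-uniform a priori energy estimate. First I would set up the bootstrap: assume on a maximal existence interval $[0,T_\epsilon)$ that $\sup_{t}\mathcal{H}^s_\epsilon(t)\le 2\tfrac{c_u}{c_l}c_0$ (so that $f_\epsilon, h_\epsilon$ stay in a small ball where the nonlinearity and the assumptions H1--H4 can be used), and close the estimate with a strictly better constant. Local existence of classic solutions for \eqref{vmbtwolinear} at fixed $\epsilon$ is standard (energy method in $H^s$), so the whole content is the uniform-in-$\epsilon$ a priori bound. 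The energy functional is not quite $\mathcal{H}^s_\epsilon$ itself: as in \cite{briant-2015-be-to-ns,mouhotneumann-2006-decay} I would build a modified ``mixed-norm'' Lyapunov functional
\[
\mathcal{E}^s_\epsilon(t) = \|(f_\epsilon,h_\epsilon,B_\epsilon,\sqrt\epsilon E_\epsilon)\|^2_{H^s_x} + \epsilon^2\|(\nabla_v f_\epsilon,\nabla_v h_\epsilon)\|^2_{H^{s-1}} + \text{(small correctors)},
\]
equivalent to $\mathcal{H}^s_\epsilon$ with constants $c_l,c_u$, where the correctors are $\epsilon$-weighted cross terms (of the type $\langle \nabla^i_v\nabla^j_x f_\epsilon, \nabla^{i-1}_v\nabla^{j+1}_x f_\epsilon\rangle$ for the $v$-derivatives, and a macroscopic corrector of the form $\int \nabla_x^k \Pi f_\epsilon \cdot (\text{derivative of fluxes})$ to recover dissipation of the hydrodynamic modes $\rho_\epsilon,u_\epsilon,\theta_\epsilon$). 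Differentiating $\mathcal{E}^s_\epsilon$ in time and using H1--H3 I would aim at a differential inequality
\[
\tdt \mathcal{E}^s_\epsilon(t) + c_l\,\mathcal{D}^s_\epsilon(t) \;\le\; C\big(\sqrt{\mathcal{H}^s_\epsilon(t)} + \mathcal{H}^s_\epsilon(t)\big)\,\mathcal{D}^s_\epsilon(t),
\]
which, under the smallness of $\mathcal{H}^s_\epsilon$, absorbs the right side into $\tfrac{1}{2}c_l\mathcal{D}^s_\epsilon$ and integrates to \eqref{esttheowhole}; the bootstrap then extends the solution globally.

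The term-by-term bookkeeping splits into: (i) the transport terms $\reps\vdot f_\epsilon$, $\reps\vdot h_\epsilon$ — these are skew-adjoint in $L^2_{x,v}$ so they vanish on pure $x$-derivatives, and on mixed $v$-derivatives they produce $\reps\nabla^{i-1}_v\nabla^{j+1}_x$ terms that are exactly what the $v$-derivative correctors and the $\epsilon^2$ weight are designed to handle, losing one power of $x$-derivative but gaining the $\repst$-weighted $f^\perp_\epsilon,h^\perp_\epsilon$ dissipation in $\mathcal{D}^s_\epsilon$; (ii) the collision terms $\repst\mathcal{L},\repst\mathsf{L}$ — by H1, H3 and \eqref{constant-a3-a4-hi} these give the coercive $\repst\|(f^\perp_\epsilon,h^\perp_\epsilon)\|^2_{H^s_{\Lambda_x}}$ plus $\|(f_\epsilon,h_\epsilon)\|^2_{H^s_\Lambda}$ pieces (after handling the defect of coercivity with the $v$-corrector), and the $\mathbf K,\mathbf\Phi$ parts are absorbed via H2 with $\delta$ small; (iii) the Maxwell block — pairing the $E_\epsilon$ and $B_\epsilon$ equations with $\epsilon E_\epsilon$ and $B_\epsilon$ and using the coupling $-j_\epsilon=-\reps\int h_\epsilon v$, the $\curl$ terms cancel and one is left with $j_\epsilon\cdot E_\epsilon$, which together with the $\repst E_\epsilon\cdot v$ source in the $h_\epsilon$ equation and the $\reps$-weighted $\|n_\epsilon\|^2_{H^{s-1}}$ term in $\mathcal{D}^s_\epsilon$ produces, after the usual integration-by-parts identity $\int j_\epsilon\cdot E_\epsilon = -\int n_\epsilon\,\divg(\dots)$, a genuine dissipation of $(E_\epsilon,B_\epsilon)$ in $H^{s-1}_x$; (iv) the quadratic terms $\reps\Gamma(f_\epsilon,f_\epsilon)$, $\reps\Gamma(h_\epsilon,f_\epsilon)$ — controlled by H4, giving $\sqrt{\mathcal{H}^s_\epsilon}\,\mathcal{D}^s_\epsilon$.

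The main obstacle — and the genuinely new point relative to \cite{briant-2015-be-to-ns} — is the singular Lorentz terms $(E_\epsilon+\reps v\times B_\epsilon)\cdot\nabla_v f_\epsilon$ (and the symmetric one with $f_\epsilon\leftrightarrow h_\epsilon$), together with the lower-order term $E_\epsilon\cdot v\,h_\epsilon$. The $\reps v\times B_\epsilon\cdot\nabla_v$ piece carries an explicit $\reps$ and hits a $v$-derivative, so naively it costs $\reps$ times a full derivative of $f_\epsilon$ — far too much. The resolution I would pursue is twofold: first, exploit that $v\times B_\epsilon\cdot\nabla_v$ is skew-adjoint in $v$ (it is a rotation field, divergence-free in $v$), so against a pure $x$-derivative $\nabla^k_x f_\epsilon$ the top-order contribution is $\reps\langle (v\times\nabla^k_x B_\epsilon)\cdot\nabla_v f_\epsilon,\, \nabla^k_x f_\epsilon\rangle$ — here one integrates by parts in $v$ to move $\nabla_v$ off $f_\epsilon$, and the remaining term is quadratic in $f_\epsilon$ with a $B_\epsilon$-derivative, hence bounded by $\reps\|\nabla^k_x B_\epsilon\|_{L^\infty_x}\|f_\epsilon\|^2_{\dots}$; the key is that because $F_\epsilon=\m(1+\epsilon f_\epsilon)$ and $H_\epsilon=\epsilon\m h_\epsilon$ carry an extra $\epsilon$ relative to the scaling in \eqref{vmbjm}, the factor that would have been $\repst$ there is only $\reps$ here, and this single power is exactly what the $\sqrt\epsilon$ weight on $E_\epsilon$ in $\mathcal{H}^s_\epsilon$ and the Gaussian decay of $\m$ (which controls the $v$-weights hidden in $v\cdot\nabla_v$) can absorb after a Young/Sobolev splitting, leaving a harmless $\sqrt{\mathcal{H}^s_\epsilon}\,\mathcal{D}^s_\epsilon$ or $\mathcal{H}^s_\epsilon\,\mathcal{D}^s_\epsilon$ remainder. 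For the mixed $v$-derivative levels the $\reps E_\epsilon\cdot\nabla_v(\nabla^i_v\nabla^j_x f_\epsilon)$ commutator is the delicate case; here the $\epsilon^2$ weight in front of $\|\nabla_v\cdot\|^2$ turns $\reps\cdot\epsilon^2=\epsilon$ into something summable, and one pays with the $\repst$-dissipation of $f^\perp_\epsilon$. I expect that making all of these absorptions simultaneously consistent — i.e. choosing the corrector sizes and the $\delta$'s so that every singular factor is matched against a term that actually appears with the right power of $\epsilon$ in $\mathcal{D}^s_\epsilon$ — is where essentially all the work lies, and this is presumably carried out in Section \ref{secEstimates}.
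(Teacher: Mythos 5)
Your overall architecture (a mixed-norm Lyapunov functional equivalent to $\mathcal{H}^s_\epsilon$, a differential inequality of the form $\tdt\mathcal{E}^s_\epsilon + c\,\mathcal{D}^s_\epsilon \lesssim \sqrt{\mathcal{H}^s_\epsilon}\,\mathcal{D}^s_\epsilon$, and a bootstrap) matches the paper, as does your treatment of the transport, collision and quadratic terms, and your observation that the top-order magnetic Lorentz contribution cancels by skew-adjointness of $(v\times B_\epsilon)\sdot\nabla_v$ while the commutators must be paired against the $\repst$-weighted dissipation of $(f_\epsilon^\perp,h_\epsilon^\perp)$. However, your item (iii) contains a genuine gap: the standard pairing of the Maxwell block with $(\epsilon E_\epsilon, B_\epsilon)$ does \emph{not} produce dissipation of $(E_\epsilon,B_\epsilon)$. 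In that identity the $\curl$ terms cancel and the resulting $j_\epsilon\sdot E_\epsilon$ cancels against the $\repst E_\epsilon\sdot v$ source in the $h_\epsilon$ equation --- this is precisely the conservation law \eqref{conservationlaws}. Maxwell's system is hyperbolic and carries no intrinsic damping, so no integration by parts or choice of $\delta$'s in that computation yields $\|(E_\epsilon,B_\epsilon)\|^2_{H^{s-1}_x}$ on the dissipative side. Since the Lorentz terms and the $\repst E_\epsilon\sdot v$ source can only be absorbed if $\|E_\epsilon\|^2_{H^{s-1}_x}$ genuinely appears in $\mathcal{D}^s_\epsilon$, this is not a cosmetic omission: without it the differential inequality does not close.

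The missing idea (Lemma \ref{lemma-curl-e} of the paper, following \cite{mvmbtothree}) is a compensator built from the kinetic Ohm's law: multiplying the $h_\epsilon$ equation by $\tilde v$ with $\mathsf{L}\tilde v=v$ and integrating in $v$ gives \eqref{equationJtilde}, namely $\epsilon^2\partial_t\tilde j_\epsilon + \divg\intv\tilde v\otimes v\,h_\epsilon\bdv - \sigma E_\epsilon - j_\epsilon = \epsilon\intv\tilde v N_1\bdv$, whose term $-\sigma E_\epsilon$ is a genuine damping. Pairing this equation (and its $\curl$) with $E_\epsilon$ produces $\sigma\|E_\epsilon\|^2_{H^{s-1}_x}$ at the price of $\repst\|h_\epsilon^\perp\|^2_{H^s_x}$, which is available from the microscopic dissipation, and the cross functional $\epsilon\intt\nabla_x^kE_\epsilon\sdot\curl\nabla_x^kB_\epsilon\,\bd x$ then transfers dissipation to $\|\curl\nabla_x^k B_\epsilon\|^2_{L^2}$, which Poincar\'e's inequality and the conservation laws upgrade to $\|B_\epsilon\|^2_{H^{s-1}_x}$. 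A secondary, less serious weakness: your claim that the surviving $\reps(v\times\nabla_x^iB_\epsilon)\sdot\nabla_v$ commutators are absorbed by ``the $\sqrt\epsilon$ weight on $E_\epsilon$ and the Gaussian decay of $\m$'' does not close as stated, since a bound of the form $\reps\|B_\epsilon\|_{H^s_x}\|f_\epsilon\|^2_{H^s_\Lambda}$ is $\reps\sqrt{\mathcal{H}^s_\epsilon}\,\mathcal{D}^s_\epsilon$, not $\sqrt{\mathcal{H}^s_\epsilon}\,\mathcal{D}^s_\epsilon$; the correct bookkeeping (as in \eqref{estr222}) is to observe that these commutators are tested against $\nabla_x^k h_\epsilon^\perp$ only, so that they can be written as $\|B_\epsilon\|_{H^s_x}\|f_\epsilon\|_{H^s_\Lambda}\|\reps h_\epsilon^\perp\|_{H^s_\Lambda}\lesssim\hd$.
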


\begin{remark}
We  use a equivalent norm $\tilde{H}_\epsilon^s$ (\eqref{normequalhs}) instead of $H^s_\epsilon$ to obtain the prior estimate \eqref{esttheowhole}. The constants $c_l$ and $c_u$ come from the equivalent relation of $\tilde{H}_\epsilon^s$ and $H^s_\epsilon$, see \eqref{estclcu}. Furthermore, since we have set the lower bound in \eqref{assump-h-h-h} to be one (see Remark \ref{remark-one}), in the general case, $c_0$ is dependent of the lower bound in \eqref{assump-h-h-h}.

\end{remark}

\begin{remark}
\label{remark-diff}

Noticing in the definition of $\mathcal{D}_\epsilon^s$, we lose order one derivative (with $x$) of electromagnetic field. But in the Lorentz term, the exists one extra order one derivative (with $v$). Owing to these two facts, the Lorentz force are not easy to bound. On the other hand, the key point of the proof is to obtain energy estimates like:\
\[  \tdt \mathcal{H}_\epsilon^s  + \mathcal{D}_\epsilon^s \lesssim \mathcal{D}^s_\epsilon(t) {\mathcal{H}_\epsilon^s}(t).  \]
Noticing that there exists $\sqrt{\epsilon}$ before $E_\epsilon$ in $\mathcal{H}_\epsilon^s$, it brings difficulties during the proof. Taking the first equation of \eqref{vmbtomhd} for example, during the energy estimates, there will exist  terms like $-\intps \nabla_x^s E_\epsilon    \sdot \nabla_v ( \m h_\epsilon) \nabla_x^s f_\epsilon  \bd v \bd x$ which is not easy to bound by $\hd$. We must split $\epsilon$ from $-\intps \nabla_x^s E_\epsilon    \sdot \nabla_v ( \m h_\epsilon) \nabla_x^s f_\epsilon  \bd v \bd x$ to close the energy estimates.  See Sec. \ref{sec-difficulty} for more details and strategies.

Furthermore, from \cite[pp.19]{diogosrm-2019-vmb-fluid}(where $\alpha =\epsilon,~~\beta=\gamma=1$), the dimensionless VMB system(for strong interaction $\delta=1$) to Navier-Stokes-Maxwell  system is as follows:
  \begin{align}
  \label{vmbtnsw}
  \begin{cases}
  \epsilon \partial_t   f^+_\epsilon + \vdot f^+_\epsilon +  (  \epsilon E_\epsilon +    v\timess B_\epsilon) \sdot\nabla_v f^+_\epsilon =  \reps \Q(f^+_\epsilon,f^+_\epsilon) + \reps \Q(f^+_\epsilon,f^-_\epsilon), \\
  \epsilon \partial_t   f^-_\epsilon + \vdot f^-_\epsilon - (  \epsilon E_\epsilon +    v\timess B_\epsilon) \sdot\nabla_v f^-_\epsilon =  \reps \Q(f^-_\epsilon,f^+_\epsilon) + \reps \Q(f^-_\epsilon,f^-_\epsilon), \\
  \partial_t   E_\epsilon - \curl   B_\epsilon = - \tfrac{ 1}{\epsilon^2} \intv (f_\epsilon^+ - f_\epsilon^-) v \bd v,\\
  \partial_t   B_\epsilon + \curl   E_\epsilon =0,\\
  \divg   B_\epsilon =0,~~ \divg   E_\epsilon = \tfrac{1}{\epsilon}\intv (f^+_\epsilon - f^-_\epsilon) \bdv.
  \end{cases}
  \end{align}
  Although the kinetic parts of \eqref{vmbtnsw} and \eqref{vmbtomhd}  are the same, the Lorentz force in \eqref{vmbtomhd} are harder to be bounded than that in \eqref{vmbtnsw}. Indeed, there exists $\epsilon$ before $\partial_t E_\epsilon$ in the third equation of \eqref{vmbtomhd}. Due to this extra coefficent $\epsilon$, there is no useful estimate of electric field. This makes the Lorentz term are harder to be bounded. See Sec. \ref{sec-difficulty} for more details.
\end{remark}

Before  stating the theorem on fluid limit,  we introduce the following $A(v)$ and $B(v)$, vector $\tilde{v}$
\begin{aligno}
\label{estab}
A(v) = v \otimes v - \tfrac{|v|^2}{3}\mathbb{I},~~B(v) = v ( \tfrac{|v|^2}{2} - \tfrac{5}{2}), ~\mathcal{L}\hat{A}(v) = A(v),~~\mathcal{L}\hat{B}(v) = B(v),~~\mathsf{L}\tilde{v} = v.
\end{aligno}
Then, denoting
\begin{align}
\label{nusigma}
 \nu = \tfrac{1}{15} \sum\limits_{ 1 \le i \le 3 \atop 1 \le j \le 3}\intv A_{ij}\hat{A}_{ij}\bdv,~~\kappa = \tfrac{2}{15} \sum\limits_{ 1 \le i \le 3  }\intv B_{i}\hat{B}_{i}\bdv,~~\sigma = \tfrac{1}{3}\intv \tilde{v}\sdot v \bdv,
\end{align}
these three strictly positive constants will appear in the MHD system. Furthermore, let $u_0, \theta_0, n_0, E_0, B_0 \in H^s_x$ and satisfy (up to a subsequence)
\[ \mathbf{P} u_\epsilon(0) \to u_0,~~\tfrac{3}{5}\theta_\epsilon(0) - \tfrac{2}{5}\rho_\epsilon(0)  \to \theta_0,~B_\epsilon(0) \to B_0, \text{in}~~H^{s-1}_x.\]
where $\mathbf{P}$ is the Leray projector.
\begin{theorem}[Fluid limit]
\label{theoremlimit}
Under the assumption  in the section \ref{sec-assump-on-L} and the assumption \eqref{estMeaninitial} on the initial data, for  the solutions $f_\epsilon,~ h_\epsilon,~E_\epsilon,~B_\epsilon$ constructed in Theorem \ref{theoremexi}, it follows that  for any $T >0$
\begin{aligno}
\label{esttheolimit}
f_\epsilon \to \rho + u \sdot v + \tfrac{|v|^2-3}{2} \theta, ~~  h_\epsilon \to 0,~~B_\epsilon \to B,~~\text{in}~~~L^2(0,T);H^{s-1}_x),
\end{aligno}
where $\rho,~u,~\theta,~B$(belonging to $L^\infty((0,\infty);H^s_x)$) are strong solutions to the following MHD system:
\begin{align*}
\begin{cases}
\partial_t u + u\sdot \nabla u - \nu \Delta u + \nabla P = (\curl B) \timess B,\\
\partial_t \theta + u \sdot\nabla \theta - \kappa \Delta \theta =0,\\
\divg u = \divg B= 0,~~ \rho + \theta =0,\\
\partial_t B  - \tfrac{1}{\sigma} \Delta B = \curl(u \times B),\\
u(0)=u_0,~\theta(0)=\theta_0,~B(0)=B_0.
\end{cases}
\end{align*}
Furthermore, for any $\tau>0$, we can infer
\begin{align}
\label{well}
\mathbf{P} u_\epsilon \to u,~~\tfrac{3}{5}\theta_\epsilon - \tfrac{2}{5}\rho_\epsilon \to \theta,~~\text{in},~~C([\tau, +\infty); H^{s-1}_x).
\end{align}

\end{theorem}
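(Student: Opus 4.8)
The plan is to pass to the limit in the fluctuation system \eqref{vmbtwolinear} using the uniform bounds of Theorem \ref{theoremexi}. First I would extract weak limits: since $\sup_t \mathcal{H}^s_\epsilon(t) \le \tfrac{c_u}{c_l}\mathcal{H}^s_\epsilon(0) \le C_0$ and $\int_0^\infty \mathcal{D}^s_\epsilon \le C_0$, we get (up to subsequences) $f_\epsilon \weakc f$, $h_\epsilon \weakc h$, $B_\epsilon \weakc B$, $\sqrt\epsilon E_\epsilon \weakc 0$ in the appropriate $L^\infty_t H^s_x$/$L^2_t H^s_{\Lambda}$ spaces, with $\epsilon^2 \|\nabla_v f_\epsilon\|^2_{H^{s-1}}, \epsilon^2\|\nabla_v h_\epsilon\|^2_{H^{s-1}} \to 0$. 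The dissipation bound $\repst\|(f_\epsilon^\perp, h_\epsilon^\perp)\|^2_{H^s_{\Lambda_x}} \le \mathcal{D}^s_\epsilon$ forces $f_\epsilon^\perp, h_\epsilon^\perp = O(\epsilon)$ in $L^2_t H^s_{\Lambda_x}$, so $f$ is infinitesimally Maxwellian, $f = \rho + u\sdot v + \tfrac{|v|^2-3}{2}\theta$, and similarly $h = n_h \cdot 1$; the $\reps\|n_\epsilon\|^2_{H^{s-1}}$ term kills $n_\epsilon \to 0$, hence $h \equiv 0$. Meanwhile from $\divg B_\epsilon = 0$ and $\epsilon\,\divg E_\epsilon = n_\epsilon \to 0$ one keeps $\divg B = 0$; and from $\epsilon\partial_t E_\epsilon - \curl B_\epsilon = -j_\epsilon$ together with $\partial_t B_\epsilon + \curl E_\epsilon = 0$, the Ohm's-law relation $j = \sigma(\text{something} + u\times B)$ must be teased out, which is where the limiting equation for $B$ comes from.

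Next I would identify the limit equations by the standard moment method. Test the $f_\epsilon$-equation against $1, v, \tfrac{|v|^2-3}{2}$ to get approximate local conservation laws; the stiff terms $\repst \mathcal{L}(f_\epsilon)$ vanish against the collision invariants, and the flux terms $\reps\vdot f_\epsilon$ produce, after using $f_\epsilon = \bdp f_\epsilon + f_\epsilon^\perp$ and the Boltzmann-to-Navier-Stokes asymptotic expansion, the convective and viscous terms. Concretely, $\reps \intv v\otimes v\, f_\epsilon^\perp\bdv$ is handled by writing $f_\epsilon^\perp \approx -\epsilon \mathcal{L}^{-1}(\ldots)$ and recognizing $\hat A(v), \hat B(v)$ from \eqref{estab}, yielding the viscosity $\nu$ and conductivity $\kappa$ of \eqref{nusigma}; the limiting incompressibility $\divg u = 0$ and Boussinesq relation $\rho + \theta = 0$ come from the stiffest parts of the continuity and momentum balances. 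For the Lorentz contribution, the key is that $(\epsilon E_\epsilon + v\times B_\epsilon)\sdot\nabla_v h_\epsilon$ and $E_\epsilon\sdot v\, h_\epsilon$ terms on the right of the $f_\epsilon$-equation, after integrating against $v$, combine with $j_\epsilon$ through the Maxwell equations to produce $(\curl B)\times B$ in the momentum equation and $\curl(u\times B) + \tfrac1\sigma\Delta B$ in the magnetic equation; here I would use $\mathsf{L}\tilde v = v$ to invert the linear operator in the $h_\epsilon$-equation and extract $j = \sigma(-\nabla p_e + u\times B)$ type identities, with the electric field eliminated since $\sqrt\epsilon E_\epsilon \to 0$.

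To upgrade weak convergence to the strong convergence \eqref{esttheolimit} in $L^2(0,T; H^{s-1}_x)$, I would use compactness in $x$ coming from the uniform $H^s_x$ bound (so $H^s_x \hookrightarrow\hookrightarrow H^{s-1}_x$ on $\mathbb{T}^3$) combined with equicontinuity in $t$: from the equations, $\partial_t$ of the macroscopic quantities (and of $B_\epsilon$) is bounded in a negative-order space uniformly in $\epsilon$ once the stiff terms are controlled via the dissipation bound, so Aubin–Lions–Simon applies. The oscillating (acoustic) part of $f_\epsilon$ — the $\rho_\epsilon, \divg u_\epsilon$ modes — is the usual subtlety: these are only weakly compact, but since $\sqrt\epsilon E_\epsilon$, $n_\epsilon$, and the fast modes all carry the small parameter, averaging in time (or the fact that on $\mathbb{T}^3$ the acoustic modes have nonzero group velocity $\reps$ and disperse) removes them; alternatively one can note that the a priori estimate already controls $\repst\|f_\epsilon^\perp\|^2$ strongly, and the incompressible modes $\mathbf P u_\epsilon$ and $\tfrac35\theta_\epsilon - \tfrac25\rho_\epsilon$ are genuinely compact, which is exactly what \eqref{well} asserts. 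Finally, uniqueness of strong solutions to the limiting MHD system \eqref{mhddd} — which holds in $L^\infty_t H^s_x$ for $s \ge 3$ by a standard energy/Gronwall argument — lets us conclude that the whole family converges, not just a subsequence, and the well-prepared-data hypotheses $\mathbf P u_\epsilon(0)\to u_0$ etc. pin down the initial conditions.

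The main obstacle I expect is the rigorous justification of the Lorentz/Maxwell part of the limit: because $\mathcal{H}^s_\epsilon$ only controls $\sqrt\epsilon E_\epsilon$ (not $E_\epsilon$) and the dissipation $\mathcal{D}^s_\epsilon$ loses one $x$-derivative on the fields, passing to the limit in the current $j_\epsilon = \reps\intv h_\epsilon v\bdv$ and in the bilinear term $\curl(u_\epsilon\times B_\epsilon)$ requires carefully exploiting the structure of the $h_\epsilon$-equation — in particular the cancellation between $\repst E_\epsilon\sdot v$ and $\repst\mathsf{L}(h_\epsilon)$ and the relation $\epsilon\,\divg E_\epsilon = n_\epsilon$ — rather than naive bounds, and verifying that the singular factors genuinely combine into the finite resistive term $\tfrac1\sigma\Delta B$ with $\sigma$ given by \eqref{nusigma}.
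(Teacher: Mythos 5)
Your overall architecture matches the paper's: decompose $f_\epsilon,h_\epsilon$ into macroscopic and microscopic parts and use the $\repst\|(f_\epsilon^\perp,h_\epsilon^\perp)\|^2_{H^s_{\Lambda_x}}$ and $\reps\|n_\epsilon\|^2_{H^{s-1}}$ dissipation to force the infinitesimal-Maxwellian form and $h\to 0$; pass to the limit in the local conservation laws with $\hat A,\hat B$ producing $\nu,\kappa$; use $\mathsf{L}\tilde v=v$ (i.e.\ the moment $\tilde j_\epsilon=\reps\intv h_\epsilon\tilde v\,\bdv$) to extract Ohm's law; and use Aubin--Lions--Simon on $\mathbf{P}u_\epsilon$ and $\tfrac35\theta_\epsilon-\tfrac25\rho_\epsilon$ for the strong-in-time statement \eqref{well}. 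This is exactly Steps 1--5 of Section \ref{sec-limit}.

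There is, however, one step in your plan that would fail as written: you propose to close the induction equation ``with the electric field eliminated since $\sqrt\epsilon E_\epsilon\to 0$.'' The vanishing of $\sqrt\epsilon E_\epsilon$ (or $\epsilon E_\epsilon$) gives no information whatsoever about the limit of $E_\epsilon$ itself, and in fact $E_\epsilon$ does \emph{not} tend to zero: its limit $E=\tfrac1\sigma\curl B-u\times B$ is generically nonzero and is precisely what feeds Faraday's law $\partial_t B+\curl E=0$ to produce $\partial_t B-\tfrac1\sigma\Delta B-\curl(u\times B)=0$. To make this rigorous you need an $\epsilon$-uniform, $O(1)$ dissipative bound $\int_0^t\|E_\epsilon\|^2_{H^{s-1}_x}\,\bd\tau\le C_0$, which is not visible in $\mathcal{H}^s_\epsilon$ and must be manufactured separately; the paper obtains it in Lemma \ref{lemma-curl-e} by exploiting the damping term $-\sigma E_\epsilon$ in the equation \eqref{equationJtilde} for $\tilde j_\epsilon$ (this is also why $\|E_\epsilon\|^2_{H^{s-1}_x}$ sits inside $\mathcal{D}^s_\epsilon$). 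Only with that bound can one extract a weak limit $E$, combine $j_\epsilon\to\curl B$ (from Amp\`ere's law, since $\epsilon\partial_t E_\epsilon\to0$) with $j=\sigma(E+u\times B)$ (from the limit of \eqref{equationJtilde}, noting $n_\epsilon\to0$ kills the $\nabla n$ contribution, so no electron-pressure term survives), and then substitute $E$ into Faraday's law. As a smaller point, your appeal to dispersion of the acoustic modes is not available on $\mathbb{T}^3$; the paper sidesteps this by only claiming time-continuity/strong convergence for the divergence-free and Boussinesq combinations $\mathbf{P}u_\epsilon$ and $\tfrac35\theta_\epsilon-\tfrac25\rho_\epsilon$, which is what \eqref{well} states.
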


\begin{remark}
\label{remark-ohm}
The derivation of system \eqref{mhddd} is based on the approximate conservation laws. Here, we comments on the Ohm's law.  From  Remark \ref{remark-diff}, the kinetic parts of \eqref{vmbtomhd} and \eqref{vmbtnsw} are the same,  in \cite{mvmbtothree} \cite[pp.61]{diogosrm-2019-vmb-fluid},  the Ohm's law derived from \eqref{vmbtnsw} is
\[ j =  \sigma( E + u \times B + \nabla n) - nu. \]
Noticing that there exists a coefficient $\reps$ before the dissipative energy estimates of $n_\epsilon$ in the definition of $\mathcal{D}_\epsilon^s$, for \eqref{vmbtomhd}, we can infer that
\[  n=0. \]
From the third equation of \eqref{vmbtomhd}, as Knudsen number goes to zero, we can infer that   $$j = \curl B.$$
All together, we can  verify the Ohm’s law for MHD system
\[ j_\epsilon \to j= \curl B=\sigma (   E  +  u\timess B),~~~~\text{in the distributional sense}.\]
This is how we can recover the magnetic field equation in \eqref{mhddd}.

\end{remark}

\begin{remark}
If the initial data are well-prepared, i.e.,
\[  f_\epsilon(0)= \rho_0 + u_0 \sdot + \tfrac{|v|^2-3}{2} \theta_0,~~ \divg u_0 =0,~~ \rho_0 + \theta_0 =0,  \]
then \eqref{well}
can be improved to
\begin{align*}
\mathbf{P} u_\epsilon \to u,~~\tfrac{3}{5}\theta_\epsilon - \tfrac{2}{5}\rho_\epsilon \to \theta,~~\text{in},~~C([0, +\infty); H^{s-1}_x).
\end{align*}
\end{remark}

\subsection{Historical background, difficulties and novelty }
\label{sec-difficulty}

\subsubsection{Historical background strategies} There are two ways of justifying  the hydrodynamics limit of the Boltzmann equation and its coupled system. One is based on the  the renormalized solution.  We refer to \cite{bgl1993convergence,diperna-lions1989cauchy,gsrm2004,lm2010soft,lm2001acoustic,masmoudi-srm2003stokesfourier,mischler2010asens} for the  work on the existence of renormalized solutions and fluid limit in renormalized solutions work. The other is in the classic   solution  framework.   The existence of classic solution to VMB system can be found in \cite{dlyz2017cmp,guo-2003-vmb-invention,sr2006-vmb}. Basically, there are three strategies of verifying rigorously the fluid limits of the Boltzmann equation and its coupled system: spectral analysis of the semi-group(see \cite{bu1991,vpb2020limit-spectrum}) , Hilbert expansion methods (see \cite{guo2006NSlimit,twovpblimits,vmbtonsp,vmbtonswh}) and convergence method based on uniform estimates (see \cite{ns-limit-2018,briant-2015-be-to-ns,vmbtonswu,uvpb2020}).  The Navier-Stokes limit of the Boltzmann equation can be found in \cite{bu1991,briant-2015-be-to-ns,guo2006NSlimit,ns-limit-2018}.    The diffusive limit of the Valsov-Poisson-Boltzmann(VPB) equation was inverstigated in \cite{uvpb2020,jz2020vpbconvergence,vpb2020limit-spectrum,twovpblimits}.

As mentioned before in the Introduction, the key step towards to the justification is the uniform estimates. From the  local coercivity properties of the linear Boltzmann operators (\eqref{a2}), there is only dissipative estimates for the microscopic parts ( $f_\epsilon - \bdp f_\epsilon$ and $h_\epsilon - \bdp h_\epsilon$). To get inequalities like
\[ \tdt \mathcal{H}^s_\epsilon(t) + \mathcal{D}_\epsilon^s \le \mathcal{H}_\epsilon^s(t) \mathcal{D}_\epsilon^s(t), \]
we must obtain  the dissipative estimates of the macroscopic parts  $\bdp f_\epsilon $ and $\bdp h_\epsilon$. To achieve this, one idea is to employ the  Grad’s 13 moment equations (see \cite{vmbtonswu,guo-2003-vmb-invention} for instance). In this work, we follow the ``mixed norm'' idea used in \cite{briant-2015-be-to-ns,mouhotneumann-2006-decay,mvmbtothree}. From the point view of this work, the important ingredient of this ``mixed norm $ \dt \intps \nabla_x f_\epsilon \sdot \nabla_v f_\epsilon \bdv \bd x$'' is the following inequality: taking the first equation in \eqref{vmbtwolinear} for example
\begin{align}
\label{est-z2}
  \dt \intps \nabla_x f_\epsilon \sdot \nabla_v f_\epsilon \bdv \bd x +   \reps \|\nabla_x f_\epsilon\|_{L^2}^2 \le \cdots.
\end{align}
In the above equation, there exists dissipative estimates for the macroscopic part. The advantage of this framework is that we can recover the macroscopic parts in one simple inequality to avoid using the Grad's 13 moment equations which is quite involved and  hard to bound for the dimensionless VMB system (see the almost one hundred pages work \cite{vmbtonswu} on Navier-Stokes-Maxwell limit of VMB system). But for the dimensionless system, the norm used in \cite{briant-2015-be-to-ns} is anisotropic. It brings new difficulties in the process of bounding the singular Lorentz term.

\subsubsection{difficulties}
The difficulties in obtaining the uniform estimates  come from the very singular Lorentz force term and the hyperbolicity of Maxwell's system.  Firstly, recalling the instant energy norm $\mathcal{H}^s_\epsilon$
\[\mathcal{H}_\epsilon^s(t)= \|(f_\epsilon,  g_\epsilon, B_\epsilon, \sqrt\epsilon E_\epsilon)\|_{H^s_x}^2 + \epsilon^2\|(\nabla_v f_\epsilon, \nabla_v  g_\epsilon)\|_{H^{s-1}}^2, \]
  there is no $L^\infty$ bound of $f_\epsilon$ and $h_\epsilon$ on the phase space. Besides, there
also no useful $L^2$ estimates of $(\nabla_v^i f_\epsilon, \nabla_v^i h_\epsilon)(i \ge 1)$ with respect to time $t$. Furthermore, since there exists $\sqrt \epsilon$ before $E_\epsilon$,  there is no useful estimate of the electric field can be derived from the anisotropic norm $\mathcal{H}_\epsilon^s$ too.

Due to  this anisotropic norm $\mathcal{H}_\epsilon^s$ and the singular Lorentz force term,  there exist difficulties in the process of obtaining uniform estimates. Indeed, recalling the equations of $f_\epsilon$ and $h_\epsilon$
\begin{align}
\label{vmbtwolinearc}
\begin{cases}
\partial_t f_\epsilon +  \reps \vdot f_\epsilon  +   \repst \mathcal{L}(f_\epsilon)  = E_\epsilon\sdot ( v \sdot    h_\epsilon - \nabla_v    h_\epsilon)    - \reps (v \times B_\epsilon) \sdot \nabla_v    h_\epsilon + \reps \Gamma(f_\epsilon, f_\epsilon), \\
\partial_t  h_\epsilon +  \reps \vdot  h_\epsilon   - \repst  E_\epsilon \sdot v +   \repst \mathsf{L}( h_\epsilon)  =E_\epsilon\sdot ( v \sdot    f_\epsilon - \nabla_v    f_\epsilon)    - \reps (v \times B_\epsilon) \sdot \nabla_v    f_\epsilon + \reps \Gamma(h_\epsilon, f_\epsilon), \\
\epsilon \partial_t E_\epsilon - \curl B_\epsilon = -   j_\epsilon , \\
   \partial_t B_\epsilon + \curl E_\epsilon =0,\\
\divg B_\epsilon =0,~~\epsilon \sdot \divg E_\epsilon =  n_\epsilon,
\end{cases}
\end{align}
the first two terms on the right hand of the first two equations in \eqref{vmbtwolinearc} are generated by the Lorentz force term. Since there exists extra $\reps$ before $(v \times B_\epsilon) \sdot \nabla_v    h_\epsilon $, there exists difficulty in bounding  this term with magnetic field.  Formally, the term $E_\epsilon \sdot \nabla_v h_\epsilon$ is not singular. But it is very hard to bound. Indeed, as mentioned before, no $L^\infty$ bound of $f_\epsilon$ and $h_\epsilon$, no $L^2$ bound of $\nabla_v f_\epsilon$ and $\nabla_v h_\epsilon$ and no useful bound of $E_\epsilon$ are at our disposal.

  The idea of dealing with these difficulties  goes like this. By decomposing $f_\epsilon$ and $h_\epsilon$ into macroscopic part and microscopic part, we can obtain the $L^\infty$ bound of the macroscopic part ($\rho_\epsilon,~u_\epsilon,~\theta_\epsilon,~n_\epsilon$) from $\mathcal{H}_\epsilon^s$. For the microscopic part, since there exists coefficent $\repst$ before the microscopic part in the definition of $\mathcal{D}_\epsilon^s$ (only with derivation to $x$), the singular Lorentz force term can be bounded by virtue of integration by part and the structrue of the Maxwell's system.

Since the Maxwell's equations are hyperbolic, to close the energy estimates,   the dissipative estimates  of the eletromagnetic field are needed. This difficulty can be overcome by employing the idea used in \cite{mvmbtothree}. The idea is to employ the equation of $g_\epsilon$ to obtain a new equation containing a damping term of $E_\epsilon$.  In details, multiplying the equation of $g_\epsilon$ by $\tilde{v}$(see \eqref{estab} and $\tilde j_\epsilon = \reps \intv h_\epsilon \tilde v \bdv$) and then integrating over $\mathbb{R}^3$, we can obtain that
\begin{align}
\label{idea}
-   \partial_t \tilde{j}_\epsilon + \cdots + \tfrac{ 1 }{\epsilon^2} E_\epsilon = \cdots.
\end{align}
From this equation, the dissipative energy estimate of the electromagnetic field can be obtained.

Based on the uniform estimates, the MHD system can be obtained by employing the local conservation laws of system \eqref{vmbtwolinear}. The idea of recovering the equation of $B$ in \eqref{mhddd} is to employ  the Ohm's law derived from the dimensionless VMB system. Indeed, from \eqref{vmbtomhd}, we can finally obtain that
\[j= \curl B = \sigma ( E + u \times B) .\]
Based on the above relation, the last equation in \eqref{mhddd} can be obtained(see Remark \ref{remark-ohm} for more details).

\section{A prior estimates}
\label{secEstimates}
This section is devoted to proving the existence of solutions to \eqref{vmbtwolinear}, i.e., Theorem \ref{theoremexi}.  The key ingredient is the uniform prior estimate of solutions.  The proof is quite involved. We split the whole proof into four lemmas.

\begin{align}
\label{vmbtwo-rewrite}
\begin{cases}
\partial_t f_\epsilon +  \reps \vdot f_\epsilon  -   \repst \mathcal{L}(f_\epsilon)  = - \lrf \sdot \nabla_v (\m h_\epsilon) + \reps\Gamma(f_\epsilon,f_\epsilon)=N_1, \\
\partial_t h_\epsilon +  \reps \vdot h_\epsilon   - \reps  E_\epsilon \sdot v -   \repst \mathsf{L}(h_\epsilon)  =  - \lrf \sdot \nabla_v (\m f_\epsilon) + \reps\Gamma(h_\epsilon,f_\epsilon)=N_2, \\
\epsilon\partial_t E_\epsilon - \curl B_\epsilon = -   j_\epsilon, \\
  \partial_t B_\epsilon + \curl E_\epsilon =0,\\
\divg B_\epsilon =0,~~ \epsilon \sdot \divg E_\epsilon = n_\epsilon.
\end{cases}
\end{align}

\subsection{The dissipative estimates of the microscopic part}

\begin{lemma}[only related to $\nabla_x^k$]
\label{lemmaonlyx}
Under the assumptions  of Theorem \ref{theoremexi},  if $(f_\epsilon,  h_\epsilon, B_\epsilon, E_\epsilon)$ are strong solutions to \eqref{vmbtwolinear}, then
\begin{aligno}
\label{estlemmaonlyx}
& \dt \|  (f_\epsilon,  h_\epsilon, \seps E_\epsilon, B_\epsilon)\|_{H^s_x}^2  + \tfrac{1}{\epsilon^2}   \| ( f^\perp_\epsilon,  h^\perp_\epsilon)\|_{H^s_{\Lambda_x}}^2     \lesssim \hd.
\end{aligno}

\end{lemma}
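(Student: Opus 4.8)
The plan is to perform a standard energy estimate on the spatial derivatives $\nabla_x^k$ for $0 \le k \le s$ of the system \eqref{vmbtwolinear} (equivalently \eqref{vmbtwo-rewrite}), test against the corresponding derivatives of $f_\epsilon$, $h_\epsilon$, $E_\epsilon$, $B_\epsilon$ with appropriate weights, and sum. Concretely, I would apply $\nabla_x^k$ to the first equation of \eqref{vmbtwolinear}, multiply by $\nabla_x^k f_\epsilon$ and integrate over $\phs$; do the same for the $h_\epsilon$ equation against $\nabla_x^k h_\epsilon$; and for the Maxwell part multiply the $E_\epsilon$-equation by $\epsilon\,\nabla_x^k E_\epsilon$ (so that the $\epsilon\partial_t E_\epsilon$ term becomes $\tfrac{\epsilon}{2}\tdt\|\nabla_x^k E_\epsilon\|^2$, which is why $\seps E_\epsilon$ appears in the norm) and the $B_\epsilon$-equation by $\nabla_x^k B_\epsilon$. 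The transport terms $\reps \vdot \nabla_x^k f_\epsilon$ and $\reps \vdot \nabla_x^k h_\epsilon$ integrate to zero by skew-symmetry. The curl terms from Maxwell cancel against each other up to the current term, and the $-\reps E_\epsilon\sdot v$ source term in the $h_\epsilon$ equation pairs with the current term $-j_\epsilon = -\reps\intv h_\epsilon v\bdv$ in the $E_\epsilon$-equation to cancel at leading order (this is the standard Maxwell--kinetic energy cancellation). The collision terms contribute the good dissipation: by \textbf{H3}, in particular \eqref{a2}, $\repst\intps \nabla_x^k\mathcal{L}(f_\epsilon)\sdot\nabla_x^k f_\epsilon \ge \repst\|\nabla_x^k f_\epsilon^\perp\|_{L^2_\Lambda}^2$ (and similarly for $\mathsf{L}$, $h_\epsilon$), which after summing over $k$ produces the term $\tfrac{1}{\epsilon^2}\|(f_\epsilon^\perp, h_\epsilon^\perp)\|_{H^s_{\Lambda_x}}^2$ on the left.

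The remaining work is to show every other term on the right-hand side is bounded by $\hd = \mathcal{D}^s_\epsilon(t)\sqrt{\mathcal{H}^s_\epsilon(t)}$. The nonlinear collision contributions $\reps\intps \nabla_x^k\Gamma(f_\epsilon,f_\epsilon)\sdot\nabla_x^k f_\epsilon$ and the $\Gamma(h_\epsilon,f_\epsilon)$ analogue are controlled by \textbf{H4}, specifically \eqref{constant-cn}: the $\reps$ combines with the $L^2_\Lambda$-norm of the microscopic part (which carries a $\repst$ in $\mathcal{D}_\epsilon^s$) to leave the correct cubic structure $\|(f_\epsilon,h_\epsilon)\|_{H^s_x}\|(f_\epsilon,h_\epsilon)\|_{H^s_{\Lambda_x}}\cdot\reps\|(f_\epsilon^\perp,h_\epsilon^\perp)\|_{L^2_\Lambda}$, which is $\lesssim \sqrt{\mathcal{H}^s_\epsilon}\,\mathcal{D}^s_\epsilon$. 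The genuinely delicate terms are those coming from the Lorentz force, i.e. from $N_1 = -\lrf\sdot\nabla_v(\m h_\epsilon) + \cdots$ and $N_2 = -\lrf\sdot\nabla_v(\m f_\epsilon)+\cdots$. Writing $\lrf = \reps E_\epsilon + \reps v\times B_\epsilon$ (times $\m^{-1}$ up to the Maxwellian weight), we must bound $-\intps \nabla_x^k\big((\reps E_\epsilon + \reps v\times B_\epsilon)\sdot\nabla_v(\m h_\epsilon)\big)\sdot \nabla_x^k f_\epsilon$. The magnetic part is handled by integrating $\nabla_v$ by parts: since $\nabla_v\cdot(v\times B_\epsilon) = 0$, the term $\reps\intps \nabla_x^k(v\times B_\epsilon\sdot\nabla_v(\m h_\epsilon))\sdot\nabla_x^k f_\epsilon$ can be rewritten, after moving $\nabla_v$ onto $\m\nabla_x^k f_\epsilon$, as something controlled by $\reps\|B_\epsilon\|_{H^{s}_x}\|h_\epsilon\|_{H^s_\Lambda}\|f_\epsilon\|_{H^s_\Lambda}$ — but here lies the subtlety flagged in Remark \ref{remark-diff}: $\mathcal{D}_\epsilon^s$ only controls $\|B_\epsilon\|_{H^{s-1}_x}$, so for the top-order $k=s$ piece where all derivatives fall on $B_\epsilon$ we cannot afford an $H^s_x$ norm of $B_\epsilon$.

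The way around this, which I expect to be the main obstacle, is to exploit the microscopic/macroscopic split of $h_\epsilon$ (and $f_\epsilon$) together with the $\repst$ gain: the macroscopic part $\bdp h_\epsilon$ contributes through a bounded finite-dimensional set of moments whose $L^\infty$ bound comes from $\mathcal{H}^s_\epsilon$ (a Sobolev embedding, using $s\ge 3$), so for the dangerous top-order magnetic term we write $h_\epsilon = \bdp h_\epsilon + h_\epsilon^\perp$; the $\bdp h_\epsilon$ piece is then estimated by transferring a derivative off $B_\epsilon$ onto the (smooth, bounded) macroscopic moment and using $\|B_\epsilon\|_{H^{s-1}_x}\cdot(\text{something in }\mathcal{H}^s_\epsilon)$, while the $h_\epsilon^\perp$ piece gets a $\repst$ weight from $\mathcal{D}_\epsilon^s$ that absorbs the singular $\reps$ coefficient. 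For the electric part $\reps\intps\nabla_x^k(E_\epsilon\sdot\nabla_v(\m h_\epsilon))\sdot\nabla_x^k f_\epsilon$, since there is no useful standalone estimate of $E_\epsilon$, one must split off a factor of $\epsilon$ as described in Remark \ref{remark-diff}: use that $\seps E_\epsilon$ is controlled in $\mathcal{H}^s_\epsilon$, so $\reps E_\epsilon = \tfrac{1}{\seps}(\seps E_\epsilon)$ and the remaining $\tfrac{1}{\seps}$ must be absorbed by pairing with microscopic parts carrying the $\repst$ dissipation, or by the $\reps\|n_\epsilon\|_{H^{s-1}}^2$ term in $\mathcal{D}_\epsilon^s$ when the relevant piece of $h_\epsilon$ or $f_\epsilon$ is macroscopic. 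Collecting all contributions, summing over $0\le k\le s$, and using $\mathcal{H}^s_\epsilon \le c_0$ small to absorb lower-order quadratic-in-$\mathcal{D}$ pieces, yields \eqref{estlemmaonlyx}. I would organize the Lorentz estimates into a short sequence of sub-claims (magnetic/macroscopic, magnetic/microscopic, electric/macroscopic, electric/microscopic) to keep the bookkeeping of the $\epsilon$ powers transparent.
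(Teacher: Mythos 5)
Your overall architecture coincides with the paper's proof of Lemma \ref{lemmaonlyx}: the same testing of $\nabla_x^k$ of \eqref{vmbtwo-rewrite} against $\nabla_x^k f_\epsilon\m$, $\nabla_x^k h_\epsilon\m$, $\nabla_x^k E_\epsilon$, $\nabla_x^k B_\epsilon$ (note the correct multiplier for the $E$-equation is $\nabla_x^k E_\epsilon$, not $\epsilon\nabla_x^k E_\epsilon$, both to produce $\seps E_\epsilon$ in the norm and to make the current term cancel the $-\reps E_\epsilon\sdot v$ source exactly); the same use of skew-symmetry for the transport, of \eqref{a2} for the dissipation, of \textbf{H4} for $\Gamma$ as in \eqref{estonlyxDn}; and the same micro/macro splitting of the Lorentz terms with the $\repst$ dissipation absorbing the singular $\reps$ whenever a microscopic factor is present. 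Your handling of the electric part via $\seps E_\epsilon\in\sqrt{\mathcal{H}^s_\epsilon}$ together with the $\reps\|n_\epsilon\|^2_{H^{s-1}}$ entry of $\mathcal{D}^s_\epsilon$ is a legitimate variant of what the paper does through the constraint $n_\epsilon=\epsilon\,\divg E_\epsilon$ in \eqref{estonlyxD1r-4}.

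There is, however, one genuine gap: the top-order magnetic contribution in which \emph{both} kinetic factors are macroscopic, namely ($i=k=s$, $j=0$ in the notation of \eqref{estonlyxD1r-1})
\begin{equation*}
\reps\intps \bigl(v\times\nabla_x^{s}B_\epsilon\bigr)\sdot\nabla_v(\m n_\epsilon)\,\nabla_x^{s}\bdp f_\epsilon\,\bd v\,\bd x .
\end{equation*}
Your proposed mechanism --- ``transferring a derivative off $B_\epsilon$ onto the macroscopic moment'' --- is an integration by parts in $x$, and it fails: the transferred derivative necessarily also lands on the other top-order factor, producing $\nabla_x^{s+1}\bdp f_\epsilon$, which no norm in $\mathcal{H}^s_\epsilon$ or $\mathcal{D}^s_\epsilon$ controls; moreover, even if the derivative count could be fixed, the prefactor $\reps$ has no microscopic partner to absorb it in this macro--macro configuration, so no product of one $\sqrt{\mathcal{H}^s_\epsilon}$ and two $\sqrt{\mathcal{D}^s_\epsilon}$ factors can close. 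The paper's resolution is not an estimate but an exact algebraic cancellation: since $\nabla_v(\m n_\epsilon)=-v\,\m\,n_\epsilon$ and $(v\times B_\epsilon)\sdot v=0$, the magnetic part of $D_{r12}$ in \eqref{estonlyxD1r-4} vanishes identically, leaving only the electric part (which you do handle correctly). Relatedly, for the companion term $D_{r2}$ where $\nabla_v$ falls on $\m f_\epsilon$ and the pairing is with $\nabla_x^k h_\epsilon$, the paper uses $\divg_v(\epsilon E_\epsilon+v\times B_\epsilon)=0$ to show that the $n_\epsilon$-component of $h_\epsilon$ drops out entirely (see \eqref{estr222}), so that the whole term is microscopic in $h_\epsilon$; you invoke $\nabla_v\sdot(v\times B_\epsilon)=0$ in passing but do not draw this conclusion. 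Without these two structural cancellations your $\epsilon$-bookkeeping cannot be completed, so the proof as proposed does not close.
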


\begin{proof}
Applying $\nabla_x^k $ to the first four equations of \eqref{vmbtwo-rewrite} and then multiplying the resulting equations by $\nabla^k_x f_\epsilon \m $, $\nabla_x^k  h_\epsilon \m $, $\nabla_x^k E_\epsilon \m$ and $\nabla_x^k B_\epsilon \m $, the integration over the phase space leads to
\begin{aligno}
\label{estx-0}
& \dt \|\nabla^k_x (f_\epsilon, h_\epsilon, \seps E_\epsilon, B_\epsilon)\|_{L^2}^2    - \repst  \intps  \left( \mathcal{L}(\nabla_x^k f_\epsilon)\sdot \nabla_x^k f_\epsilon    +  \mathsf{L}(\nabla_x^k  h_\epsilon)\sdot \nabla_x^k h_\epsilon \right) \bdv \bd x\\
& =   -\intps \left( \nabla_x^k \left(    \lrf \sdot \nabla_v ( \m h_\epsilon)\right) \nabla_x^k f_\epsilon  +  \nabla_x^k \left(\lrf \sdot \nabla_v ( \m f_\epsilon) \right)\nabla_x^k h_\epsilon \right) \bdv \bd x \\
& + \reps \intps \left( \nabla_x^k   \Gamma(f_\epsilon,f_\epsilon)  \sdot \nabla_x^k f_\epsilon  +  \nabla_x^k \Gamma(h_\epsilon,f_\epsilon)  \sdot \nabla_x^k h_\epsilon \right)   \bdv \bd x \\
& := D_1 + D_n.
\end{aligno}
  Thus we only need to pay attention to the term with coefficient $\reps$. Secondly, while $k=s$ and all the derivative acts on $ h_\epsilon$ and $f_\epsilon$, that is to say,
\[ \intps \left(  \left(   \lrf\sdot  ( \m \nabla_x^k \nabla_v  h_\epsilon)\right) \nabla_x^k f_\epsilon  +    \left(\lrf \sdot  ( \m \nabla_x^k \nabla_v f_\epsilon) \right)\nabla_x^k  h_\epsilon \right) \bdv \bd x, \]
the above term can not be directly controlled. To overcome these difficulties, we first split
\begin{aligno}
\label{estonlyD1-0}
D_1 &  = \intps \left( \left(   \lrf \sdot \nabla_v  \nabla_x^k ( \m  h_\epsilon)\right) \nabla_x^k f_\epsilon    \right) \bdv \bd x \\
 & +  \intps \left(      \nabla_x^k \left(\lrf\sdot \nabla_v  \nabla_x^k ( \m f_\epsilon) \right)\nabla_x^k  h_\epsilon \right) \bdv \bd x + D_r \\
& := D_2 + D_r.
 \end{aligno}
$D_2$ is simple and can be bounded by integrating by parts over the phase space. Indeed, by simple computation,  we can conclude that
\begin{aligno}
\label{estonlyxD2h}
D_2 &   = -   \intps         v \sdot  E_\epsilon \nabla_x^k f_\epsilon     \nabla_x^k ( h_\epsilon  )  \bdv \bd x\\
 & =  -   \intps         v \sdot  E_\epsilon \nabla_x^k f_\epsilon     \nabla_x^k ( n_\epsilon  )  \bdv \bd x - \intps         v \sdot  E_\epsilon \nabla_x^k f_\epsilon     \nabla_x^k ( h_\epsilon^\perp  )  \bdv \bd x.
\end{aligno}
Noticing that while $s \ge 3$
\begin{align*}
\vert\intps         v \sdot  E_\epsilon \nabla_x^k f_\epsilon     \nabla_x^k ( n_\epsilon  )  \bdv \bd x  \vert \lesssim \| E _\epsilon\|_{H^{s-1}}\|f_\epsilon\|_{H^s_{\Lambda_x}} \|  h_\epsilon \|_{H^s_{x}} \le  \mathcal{D}^s_\epsilon(t) \sqrt{\mathcal{H}_\epsilon^s}(t),\\
\vert \intps         v \sdot  E_\epsilon \nabla_x^k f_\epsilon     \nabla_x^k ( h_\epsilon^\perp  )  \bdv \bd x\vert \lesssim\|\sqrt\epsilon E _\epsilon\|_{H^s}\|f_\epsilon\|_{H^s_{\Lambda_x}} \|\reps h_\epsilon^\perp\|_{H^s_{\Lambda_x}}\lesssim \mathcal{D}^s_\epsilon(t) \sqrt{\mathcal{H}_\epsilon^s}(t),
\end{align*}
combining \eqref{estonlyxD2h}, it follows that
\begin{aligno}
\label{estonlyxD2}
D_2  \lesssim \mathcal{D}^s_\epsilon(t) \sqrt{\mathcal{H}_\epsilon^s}(t)
\end{aligno}

For $D_2$, denoting  \begin{aligno}
\label{estxdr}
D_r &  = \sum\limits_{ i \ge 1 \atop i+j=k} \intps   \nabla_x^i\left(    \lrf\right) \sdot \nabla_v  \nabla_x^j ( \m  h_\epsilon) \nabla_x^k f_\epsilon      \bdv \bd x \\
 & + \sum\limits_{ i \ge 1 \atop i+j=k} \intps        \nabla_x^i \left(\tfrac{\epsilon E_\epsilon +   v\times B_\epsilon}{\m \epsilon}\right) \sdot \nabla_v  \nabla_x^j ( \m f_\epsilon) \nabla_x^k  h_\epsilon   \bdv \bd x\\
 & := D_{r1} + D_{r2},
 \end{aligno}
 and noticing that
\begin{align*}
 h_\epsilon(t,x,v) = n_\epsilon(t,x) +  h_\epsilon^\perp(t,x,v),~\nabla_v n(t,x)=0,~~
\end{align*}
we have that
\begin{aligno}
\label{estr222}
D_{r2} &  = \sum\limits_{ i \ge 1 \atop i+j=k} \intps        \nabla_x^i \lrf \sdot \nabla_v  \nabla_x^j ( \m f_\epsilon) \nabla_x^k  h_\epsilon   \bd v \bd x \\
& = \sum\limits_{ i \ge 1 \atop i+j=k} \intps        \nabla_x^i \lrf \sdot \nabla_v  \nabla_x^j ( \m  f_\epsilon) \nabla_x^k  h_\epsilon^\perp  \bd v \bd x\\
& = -  \sum\limits_{ i \ge 1 \atop i+j=k} \intps     v\sdot   \nabla_x^i (\epsilon E_\epsilon) \nabla_x^j  f_\epsilon  \nabla_x^k (\reps h_\epsilon^\perp)  \bdv \bd x \\
& +  \sum\limits_{ i \ge 1 \atop i+j=k} \intps        \nabla_x^i (\epsilon E_\epsilon)     \sdot \nabla_v \nabla_x^j  f_\epsilon \cdot \nabla_x^k  ( \reps h_\epsilon^\perp) \bdv \bd x \\
& +  \sum\limits_{ i \ge 1 \atop i+j=k} \intps       v\times \nabla_x^i B_\epsilon     \sdot \nabla_v \nabla_x^j  f_\epsilon \cdot \nabla_x^k ( \reps h_\epsilon^\perp ) \bdv \bd x\\
& =D_{r21} + D_{r22} + D_{r23}.
\end{aligno}
The three  $D_{r21}$, $D_{r22}$ and $D_{r23}$ can be bounded in the similar way. Taking $D_{r23}$ for example,
\begin{align*}
D_{r23} & \le   \sum\limits_{ i \ge 1 \atop i+j=k} \intt |\nabla_x^i B_\epsilon| \intv          |v|   |\nabla_v \nabla_x^j  f_\epsilon| \cdot |\nabla_x^k \reps h_\epsilon^\perp|  \bdv \bd x \\
&  \le   \sum\limits_{ [\tfrac{s}{2}] \ge i \ge 1 \atop i+j=k} \intt |\nabla_x^i B_\epsilon(t,x)|   \|\nabla_v \nabla_x^j  f_\epsilon(t,x)\|_{L^2_{\Lambda_v}}   \|  \nabla_x^k \reps h_\epsilon^\perp(t,x)\|_{L^2_{\Lambda_v}} \bd x \\
& +   \sum\limits_{ [\tfrac{s}{2}] \le i \le s \atop i+j=k} \intt |\nabla_x^i B_\epsilon(t,x)|   \|\nabla_v \nabla_x^j  f_\epsilon(t,x)\|_{L^2_{\Lambda_v}}   \|  \nabla_x^k \reps  h_\epsilon^\perp(t,x)\|_{L^2_{\Lambda_v}} \bd x \\
& \le    \|\nabla_v \nabla_x^j  f_\epsilon(t,x)\|_{L^2_{\Lambda_v}}\|_{L^\infty_x}  \sum\limits_{ [\tfrac{s}{2}] \le i \le s \atop i+j=k} \intt |\nabla_x^i B_\epsilon(t,x)|     \|  \nabla_x^k \reps h_\epsilon^\perp(t,x)\|_{L^2_{\Lambda_v}} \bd x \\
& +   \|\nabla_x^i B_\epsilon(t,x)\|_{L^\infty_x} \sum\limits_{ [\tfrac{s}{2}] \ge i \ge 1 \atop i+j=k} \intt    \|\nabla_v \nabla_x^j  f_\epsilon(t,x)\|_{L^2_{\Lambda_v}}   \|  \nabla_x^k  \reps h_\epsilon^\perp(t,x)\|_{L^2_{\Lambda_v}} \bd x\\
& \lesssim \|B_\epsilon\|_{H^s}\|f_\epsilon\|_{H^s_\Lambda}\|\reps h_\epsilon^\perp\|_{H^s_\Lambda}\\
& \lesssim \hd.
\end{align*}
All together, we can infer that
\begin{aligno}
\label{estonlyxD2r}
D_{r2} \lesssim \hd.
\end{aligno}
For left $D_{r1}$ in \eqref{estxdr}, we decompose $D_{r1}$ as follows
\begin{aligno}
\label{estonlyxD1r-1}
D_{r1} & =  \sum\limits_{ i \ge 1 \atop i+j=k} \intps   \nabla_x^i\left(   \lrf\right) \sdot \nabla_v  \nabla_x^j ( \m  h_\epsilon) \nabla_x^k f_\epsilon^\perp     \bd v \bd x \\
& + \sum\limits_{ i \ge 1 \atop i+j=k} \intps   \nabla_x^i \lrf \sdot \nabla_v  \nabla_x^j ( \m n_\epsilon) \nabla_x^k \bdp f_\epsilon     \bd v \bd x \\
& + \sum\limits_{ i \ge 1 \atop i+j=k} \intps   \nabla_x^i\lrf \sdot \nabla_v  \nabla_x^j ( \m  h_\epsilon^\perp) \nabla_x^k \bdp f_\epsilon     \bd v \bd x\\
&:= D_{r11} + D_{r12}+ D_{r13}.
\end{aligno}
By the same way of dealing with $D_{r2}$, we can infer that
\begin{aligno}
\label{estonlyxD1r-3}
|D_{r11}| + |D_{r13}| \lesssim \hd.
\end{aligno}
As for $D_{r12}$ in \eqref{estonlyxD1r-1},  recalling that
\[\bdp f_\epsilon = \rho_\epsilon + u_\epsilon \sdot v +  \theta_\epsilon \tfrac{|v|^2-3}{2},~~ \nabla_v n_\epsilon =0, ~~n_\epsilon =  \epsilon \divg E_\epsilon,   \]
then we can deduce that
\begin{aligno}
\label{estonlyxD1r-4}
D_{r12}&= \sum\limits_{ i \ge 1 \atop i+j=k} \intps   \nabla_x^i\left(    \tfrac{\epsilon E_\epsilon +   v\times B_\epsilon}{  \epsilon}\right) \sdot \nabla_v  \nabla_x^j ( \m n_\epsilon) \nabla_x^k \bdp f_\epsilon     \bd v \bd x \\
& = -   \sum\limits_{ i \ge 1 \atop i+j=k} \intps \left( v \sdot  \nabla_x^i E_\epsilon \right) \sdot   \nabla_x^j  n_\epsilon \nabla_x^k \bdp f_\epsilon     \bdv \bd x \\
& = - \sum\limits_{ i \ge 1 \atop i+j=k} \intps \left(    \nabla_x^i E_\epsilon  \sdot    \nabla_x^k u_\epsilon \right) \nabla_x^j n_\epsilon        \bd x \\
& = - \sum\limits_{ k-1 \ge i \ge 1 \atop i+j=k} \intps \left(    \nabla_x^i E_\epsilon  \sdot    \nabla_x^k u_\epsilon \right) \nabla_x^j n_\epsilon        \bd x \\
& -  \epsilon \intps \left(    \nabla_x^k E_\epsilon  \sdot    \nabla_x^k u_\epsilon \right)   \divg E_\epsilon        \bd x\\
& \lesssim \hd.
\end{aligno}
From \eqref{estonlyxD1r-1},  \eqref{estonlyxD1r-3}, \eqref{estonlyxD1r-4} and \eqref{estonlyxD2r}, it follows
\begin{aligno}
\label{estonlyxD1r}
D_{1r} \lesssim \hd.
\end{aligno}
and
\begin{aligno}
\label{estonlyxDr}
D_{r} \lesssim \hd.
\end{aligno}
Finally, combining \eqref{estonlyxD2} and \eqref{estonlyxDr},  for the $D_1$ in \eqref{estonlyD1-0}, it follows that
\begin{align}
\label{estonlyxD1}
D_1 \lesssim \hd.
\end{align}
For the collision term ($D_n$ in \eqref{estx-0}),  by the assumption,
\begin{aligno}
\label{estonlyxDn}
D_n & =  \reps \intps \left( \nabla_x^k   \Gamma(f_\epsilon,f_\epsilon)  \sdot \nabla_x^k f_\epsilon^\perp  +  \nabla_x^k \Gamma( h_\epsilon,f_\epsilon)  \sdot \nabla_x^k  h_\epsilon^\perp \right)   \bdv \bd x \\
& \le C\|(f_\epsilon,  h_\epsilon)\|_{H_x^s}\|(f_\epsilon,  h_\epsilon)\|_{H_{\Lambda_x}^s}\|\nabla^k_x (\reps f_\epsilon^\perp, \reps h_\epsilon^\perp)\|_{L_{\Lambda}^2} \\
& \lesssim \hd.
\end{aligno}
With the help of \eqref{estx-0},   \eqref{estonlyxD1} and \eqref{estonlyxDn}, we complete the proof.

\end{proof}
\subsection{the dissipative energy estimates of $(f_\epsilon, h_\epsilon)$ }
Denoting
\[ {H}_{v,\epsilon}^{m}(t) =     \sum\limits_{1 \le k \le m}\left(\sum\limits_{ i \ge 1, j \ge 1 \atop i + j =k}8^j\|( \nabla_v^i \nabla_x^j f_\epsilon,  \nabla_v^i \nabla_x^j  h_\epsilon)\|_{L^2}^2  + \|( \nabla_v^k   f_\epsilon,  \nabla_v^k  h_\epsilon)\|_{L^2}^2\right),~~m \ge 1,~~  \]
the following lemma is to bound the derivative of $f_\epsilon$ and $g_\epsilon$ with respective to $v$.
\begin{lemma}
\label{lemmav}
Under the assumptions  of Theorem \ref{theoremexi}, if $(f_\epsilon,  h_\epsilon, B_\epsilon, E_\epsilon)$ are solutions to \eqref{vmbtwolinear}, then
\begin{aligno}
\label{estlemmav}
\epsilon^2 \dt    \sum\limits_{m=1}^{s} \tfrac{8c_1^{s-m}}{3}{H}_{v,\epsilon}^{m}(t)    +  \tfrac{3}{4} \|(\nabla_v f_\epsilon, \nabla_v  h_\epsilon)\|_{H^{s-1}_\Lambda}^2 \lesssim \|(f_\epsilon,  h_\epsilon)\|_{H^{s}_x}^2  + \|  \epsilon E_\epsilon \|_{H^{s-1}_x}^2   + \hd.
\end{aligno}
where $c_1$ comes from the computation and is only dependent of the Sobolev embedding constants.
\end{lemma}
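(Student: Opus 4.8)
The strategy is to perform weighted $v$-derivative energy estimates on the system \eqref{vmbtwo-rewrite}. For each pair $(i,j)$ with $i\ge 1$ and $i+j=k\le s$, apply $\nabla_v^i\nabla_x^j$ to the first two equations of \eqref{vmbtwo-rewrite}, multiply by $\epsilon^2\nabla_v^i\nabla_x^j f_\epsilon\,\m$ (respectively $\epsilon^2\nabla_v^i\nabla_x^j h_\epsilon\,\m$), and integrate over the phase space. The factor $\epsilon^2$ is what converts the transport term $\reps\vdot$ and the linear operator $\repst\mathcal{L}$ into $\epsilon$-free and $O(1)$ contributions. The coercivity defect estimate \eqref{constant-a3-a4-hi} together with the mixing estimate in {\bf H2} turns the $\repst\mathcal{L}$ and $\repst\mathsf{L}$ contributions into a genuine dissipation $\delta\|\nabla_v^i\nabla_x^j(f_\epsilon,h_\epsilon)\|_{L^2_\Lambda}^2$ minus lower-order terms $C_\delta\|(f_\epsilon,h_\epsilon)\|_{H^{i+j-1}}^2$ that are absorbed by induction on $k$. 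The weights $8^j$ in the definition of $H^m_{v,\epsilon}$ and the outer weights $c_1^{s-m}$ are chosen precisely so that the lower-order error terms coming from commutators and from the coercivity defect are dominated by the dissipation already gained at lower derivative orders; this is the usual bootstrapping device.

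The core difficulty lies in the terms generated by the transport operator and the Lorentz force. When $\nabla_v^i$ hits $\reps\vdot f_\epsilon$, one derivative lands on $v$ and produces $\reps\nabla_v^{i-1}\nabla_x^{j+1}f_\epsilon$, i.e. a term with \emph{one more} $x$-derivative but one fewer $v$-derivative, multiplied by $\epsilon^2\cdot\reps=\epsilon$. After integrating against $\nabla_v^i\nabla_x^j f_\epsilon$ and using Cauchy-Schwarz in $L^2_\Lambda$, this is bounded by $\epsilon\|\nabla_v^{i-1}\nabla_x^{j+1}f_\epsilon\|_{L^2_\Lambda}\|\nabla_v^i\nabla_x^j f_\epsilon\|_{L^2_\Lambda}$; the mismatched weights $8^{j+1}$ versus $8^j$ allow a Young's inequality split in which the $x$-derivative-heavier piece is controlled either by a fraction of the dissipation at the same level or, when $j=s-1$ hence $i=1$, by the purely spatial norm $\|(f_\epsilon,h_\epsilon)\|_{H^s_x}^2$ appearing on the right of \eqref{estlemmav}. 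For the Lorentz force term $-\lrf\sdot\nabla_v(\m h_\epsilon)$ (and its companion), I would expand $\nabla_v^i\nabla_x^j$ by Leibniz: the electric part $E_\epsilon$ carries no $v$-dependence so $\nabla_v$ only acts on $\m h_\epsilon$ or, via the $\m^{-1}$ and the explicit $v$, produces polynomial-in-$v$ weights that are harmless against $L^2_\Lambda$; the magnetic part $\reps v\times B_\epsilon$ has a linear $v$, so one $\nabla_v$ can hit it, again producing at worst a bounded matrix times lower-order derivatives. The coefficient bookkeeping gives $\epsilon^2\cdot\frac1\epsilon=\epsilon$ in front, so after the standard product estimates (splitting high/low frequencies via $[\tfrac s2]$ and Sobolev embedding $H^{s-1}\hookrightarrow L^\infty$ for $s\ge 3$, exactly as in Lemma \ref{lemmaonlyx}) every such term is bounded either by $\hd$ or by $\|\epsilon E_\epsilon\|_{H^{s-1}_x}^2$ (this is where the $\|\epsilon E_\epsilon\|_{H^{s-1}_x}^2$ on the right-hand side comes from — the electric field contribution at top order $j=s-1$ cannot be put inside $\mathcal{D}^s_\epsilon$ and must be left explicit, to be absorbed later using the damped equation \eqref{idea}).

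Finally, the nonlinear terms $\reps\Gamma(f_\epsilon,f_\epsilon)$ and $\reps\Gamma(h_\epsilon,f_\epsilon)$ are handled directly by hypothesis {\bf H4}, specifically the second estimate in \eqref{constant-cn} for $i\ge 1$: after multiplying by $\epsilon^2$ the factor becomes $\epsilon\lesssim 1$ and the bound $\|(f_\epsilon,h_\epsilon)\|_{H^s}\|(f_\epsilon,h_\epsilon)\|_{H^s_\Lambda}\|\nabla_v^i\nabla_x^j(f_\epsilon,h_\epsilon)\|_{L^2_\Lambda}$ is readily absorbed into $\hd$ once one observes that $\epsilon^2\|(\nabla_v f_\epsilon,\nabla_v h_\epsilon)\|_{H^{s-1}}^2\le\mathcal{H}^s_\epsilon$. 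Summing the resulting differential inequalities over all $(i,j)$ with the prescribed weights $\tfrac{8c_1^{s-m}}{3}$, choosing $\delta$ in {\bf H1}--{\bf H2} small and $c_1$ large enough (depending only on Sobolev constants) so that the accumulated error terms consume at most $\tfrac14$ of the total dissipation, leaves $\tfrac34\|(\nabla_v f_\epsilon,\nabla_v h_\epsilon)\|_{H^{s-1}_\Lambda}^2$ on the left, which is the claimed estimate. I expect the main obstacle to be the transport-generated term with the swapped $\nabla_v\leftrightarrow\nabla_x$ weight at the endpoint $i=1$, $j=s-1$: this is exactly why the spatial norm $\|(f_\epsilon,h_\epsilon)\|_{H^s_x}^2$ must appear as an inhomogeneous term and why the geometric weights $8^j$ are indispensable for closing the hierarchy.
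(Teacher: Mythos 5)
Your proposal follows essentially the same route as the paper: the $\epsilon^2$-weighted $\nabla_v^i\nabla_x^j$ energy estimates with the coercivity defect of {\bf H1}--{\bf H2}, the telescoping absorption of the transport-generated term $\nabla_v^{i-1}\nabla_x^{j+1}$ via the geometric weights $8^j$ with the endpoint $i=1$, $j=k-1$ feeding $\|(f_\epsilon,h_\epsilon)\|_{H^s_x}^2$, the Lorentz and collision terms handled as in Lemma \ref{lemmaonlyx} and {\bf H4}, and the outer weights $c_1^{s-m}$ to absorb lower-order remainders. The only minor imprecision is attributing the $\|\epsilon E_\epsilon\|_{H^{s-1}_x}^2$ term to the Lorentz force at top order, whereas in the paper it arises from the source term $\reps E_\epsilon\sdot v$ in the $h_\epsilon$ equation (the piece $\epsilon^2\|\nabla_x^j\nabla_v^i(v\sdot E_\epsilon)\|_{L^2}^2$ in the bound for $T_1$); this does not affect the argument.
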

\begin{proof}
Applying $\nabla_x^j\nabla_v^i$ to equation of $f_\epsilon$ and $g_\epsilon$ in \eqref{vmbtwo-rewrite}, based on   the resulting equations,  we can infer that
\begin{aligno}
\label{estv-0}
& \epsilon^2 \dt \|(\nabla_x^j\nabla_v^i f_\epsilon, \nabla_x^j\nabla_v^i  h_\epsilon)\|_{L^2}^2 + \intps \left( \nabla_x^j \nabla_v^i  \mathcal{L}(f_\epsilon) \sdot \nabla_x^j\nabla_v^i f_\epsilon  +  \nabla_x^j \nabla_v^i  \mathsf{L}( h_\epsilon) \sdot \nabla_x^j\nabla_v^i  h_\epsilon \right) \bdv \bd x\\
& = - \epsilon \intps \left(  \nabla_x^j \nabla_v^i ( \vdot f_\epsilon) \sdot \nabla_x^j\nabla_v^i f_\epsilon  + \nabla_x^j \nabla_v^i ( \vdot  h_\epsilon) \sdot \nabla_x^j\nabla_v^i  h_\epsilon + \nabla_x^j \nabla_v^i (v \sdot E_\epsilon) \sdot \nabla_x^j \nabla_v^i  h_\epsilon \right) \bdv \bd x \\
& \quad
 + \epsilon^2 \intps \left(\nabla_x^j\nabla_v^i N_1 \sdot \nabla_x^j \nabla_v^i f_\epsilon   + \nabla_x^j\nabla_v^i N_2 \sdot \nabla_x^j \nabla_v^i  h_\epsilon \right) \bdv \bd x \\
& = T_1   + T_2.
\end{aligno}
Based on the assumptions in Sec. \ref{sec-assump-on-L},  we can infer that
\begin{aligno}
\label{estvleft}
& \tfrac{15}{16}\|(\nabla_x^j \nabla_v^i f_\epsilon, \nabla_x^j \nabla_v^i  h_\epsilon)\|_{L^2_\Lambda}^2  -  C \|(   f_\epsilon,    h_\epsilon)\|_{H^{k-1}}^2  \\
 &  \le \intps \left( \nabla_x^j \nabla_v^i  \mathcal{L}(f_\epsilon) \sdot \nabla_x^j\nabla_v^i f_\epsilon  +  \nabla_x^j \nabla_v^i  \mathsf{L}( h_\epsilon) \sdot \nabla_x^j\nabla_v^i  h_\epsilon \right) \bdv \bd x.
\end{aligno}
By  the H\"older inequality, we can infer
\begin{aligno}
\label{estvt1}
T_1  & \le  \epsilon \|\nabla_v^{i-1} \nabla_x^{j+1} f_\epsilon\|_{L^2}\|\nabla_v^{i} \nabla_x^{j} f_\epsilon\|_{L^2} +  \epsilon \|\nabla_v^{i-1} \nabla_x^{j+1}  h_\epsilon\|_{L^2}\|\nabla_v^{i} \nabla_x^{j}  h_\epsilon\|_{L^2} \\
& \le 4 \|\nabla_v^{i-1} \nabla_x^{j+1} (f_\epsilon,  h_\epsilon)\|_{L^2}^2 + 4 \epsilon^2\|\nabla_x^j \nabla_v^i (v  E_\epsilon)\|_{L^2}^2   + \tfrac{1}{8} \|\nabla_v^{i} \nabla_x^{j} (f_\epsilon,  h_\epsilon)\|_{L^2}^2.
\end{aligno}
For the second term $T_2$ in \eqref{estv-0},  recalling that
\begin{align*}
  N_1 & =    E_\epsilon\sdot v \sdot  h_\epsilon -  (   E_\epsilon + \reps v \times B_\epsilon) \sdot \nabla_v  h_\epsilon + \reps \Gamma(f_\epsilon, f_\epsilon), \\
  N_2 & =    E_\epsilon\sdot v \sdot f_\epsilon - (  E_\epsilon +  \reps v \times B_\epsilon) \sdot \nabla_v f_\epsilon + \reps \Gamma( h_\epsilon, f_\epsilon),
\end{align*}
  $T_2$ can be split into three parts
\begin{aligno}
\label{estvt2-0}
T_2 & = \epsilon \intps \nabla_v^j \nabla_v^i \left(   {\epsilon}  E_\epsilon\sdot v \sdot  h_\epsilon -  ( \epsilon E_\epsilon +   v \times B_\epsilon) \sdot \nabla_v  h_\epsilon \right) \sdot \nabla_v^j \nabla_x^i f_\epsilon \bdv \bd x \\
& + \epsilon \intps \nabla_v^j \nabla_v^i \left(   {\epsilon}  E_\epsilon\sdot v \sdot f_\epsilon -  ( \epsilon E_\epsilon +   v \times B_\epsilon) \sdot \nabla_v f_\epsilon \right) \sdot \nabla_v^j \nabla_x^i  h_\epsilon \bdv \bd x \\
& + \epsilon \intps \left( \nabla_x^j \nabla_v^i   \Gamma(f_\epsilon,f_\epsilon)  \sdot \nabla_x^j \nabla_v^i f_\epsilon  +  \nabla_x^j \nabla_v^i \Gamma( h_\epsilon,f_\epsilon)  \sdot \nabla_x^j \nabla_v^i  h_\epsilon \right)   \bdv \bd x.
\end{aligno}
 The  structure of first two terms on the right hand of  \eqref{estvt2-0} is similar to $D_1$ in \eqref{estonlyxD1}.
The third line in \eqref{estvt2-0} can be estimated  by the assumption on the the quadratic collision operator in Sec. \ref{sec-assump-on-L}. Thus,   we can infer that
\begin{align}
\label{estvt2}
T_2 \le \epsilon\|(E_\epsilon, B_\epsilon)\|_{H^s_x}\|(f_\epsilon, h_\epsilon)\|_{H^s_\Lambda}^2 +  \epsilon\|(f_\epsilon,  h_\epsilon)\|_{H^s}\|(f_\epsilon,  h_\epsilon)\|_{H^s_\Lambda}^2  \lesssim \hd.
\end{align}

Combining \eqref{estvleft}, \eqref{estvt1}  and \eqref{estvt2}, we can infer that
\begin{aligno}
\label{estv-1-0}
& \epsilon^2 \dt \|(\nabla_x^j\nabla_v^i f_\epsilon, \nabla_x^j\nabla_v^i  h_\epsilon)\|_{L^2}^2 - 4 \epsilon^2\|\nabla_v^i\nabla^j_x (v \sdot E_\epsilon)\|_{L^2}^2  \\
& +  \tfrac{3}{4} \|(\nabla_x^j\nabla_v^i f_\epsilon, \nabla_x^j\nabla_v^i  h_\epsilon)\|_{L^2_\Lambda}^2  -   4 \|\nabla_v^{i-1} \nabla_x^{j+1} (f_\epsilon,  h_\epsilon)\|_{L^2}^2  \\
& \lesssim \hd + \| (f_\epsilon,  h_\epsilon)\|_{H^{k-1}}^2.
\end{aligno}
Denoting the second line in \eqref{estv-1-0} by $T_{j,i}^k$($i+j=k$ and $i \ge 1$), noticing that
\begin{align*}
T^k_{k-1,1}&=\tfrac{3}{4} \|(\nabla_x^{k-1}\nabla_v^1 f_\epsilon, \nabla_x^{k-1}\nabla_v^1  h_\epsilon)\|_{L^2_\Lambda}^2  -   4 \| \nabla_x^k (f_\epsilon,  h_\epsilon)\|_{L^2}^2,\\
T^k_{k-2,2}&=\tfrac{3}{4} \|(\nabla_x^{k-2}\nabla_v^2 f_\epsilon, \nabla_x^{k-2}\nabla_v^2  h_\epsilon)\|_{L^2_\Lambda}^2  -   4 \| \nabla_x^{k-1}\nabla_v^1 (f_\epsilon,  h_\epsilon)\|_{L^2}^2,\\
~~~~~&\hspace{2cm}\vdots\hspace{2cm}\\
T^k_{1,k-1}&=\tfrac{3}{4} \|(\nabla_v^{k-1} \nabla_x^1 f_\epsilon,  \nabla_v^{k-1} \nabla_x^1  h_\epsilon)\|_{L^2_\Lambda}^2  -   4 \| \nabla_v^{k-2} \nabla_x^2 (f_\epsilon,  h_\epsilon)\|_{L^2}^2\\
T^k_{0,k}&=\tfrac{3}{4} \|(\nabla_v^k f_\epsilon,  \nabla_v^k  h_\epsilon)\|_{L^2_\Lambda}^2  -   4 \| \nabla_x^{1}\nabla_v^{k-1} (f_\epsilon,  h_\epsilon)\|_{L^2}^2,
\end{align*}
thus it follows that
\begin{aligno}
\label{estv-k}
& \epsilon^2 \dt \left(  \sum\limits_{ i \ge 1, j \ge 1 \atop i + j =k}8^j\|( \nabla_v^i \nabla_x^j f_\epsilon,  \nabla_v^i \nabla_x^j  h_\epsilon)\|_{L^2}^2  + \|( \nabla_v^k   f_\epsilon,  \nabla_v^k  h_\epsilon)\|_{L^2}^2 \right) +  \tfrac{3}{4} \sum\limits_{ i \ge 1  \atop i + j =k} \|( \nabla_v^i\nabla_x^j f_\epsilon, \nabla_v^i\nabla_x^j  h_\epsilon)\|_{L^2_\Lambda}^2\\
& \lesssim   \epsilon^2 \|E_\epsilon\|_{H^{k-1}_x}^2 +    \| (f_\epsilon,  h_\epsilon)\|_{H^k_x}^2 + \|(f_\epsilon,  h_\epsilon)\|_{H^{k-1}}^2 + \hd.
\end{aligno}
From \eqref{estv-k}, there exists some $c_1 \ge 1$ independent of $k$ such that
\begin{aligno}
\label{estv-k-2}
& \epsilon^2 \dt    {H}_{v,\epsilon}^{1}(t)  +  \tfrac{3}{4} \|\nabla_v (f_\epsilon,  h_\epsilon)\|_{L^2_\Lambda}^2   \lesssim \epsilon^2 \|E_\epsilon\|_{L^2}^2 +    \| (f_\epsilon,  h_\epsilon)\|_{H^1_x}^2  + \hd.\\
& \epsilon^2 \dt    {H}_{v,\epsilon}^{k}(t)  +  \tfrac{3}{4} \|\nabla_v (f_\epsilon,h_\epsilon)\|_{H^{k-1}_\Lambda}^2 -c_1\|(\nabla_v (f_\epsilon,h_\epsilon)\|_{H^{k-2}}^2  \\
& \lesssim \epsilon^2 \|E_\epsilon\|_{H^{k-1}_x}^2 +    \| (f_\epsilon,  h_\epsilon)\|_{H^{k}_x}^2  +  \hd,
\end{aligno}
where $k \ge 2$ and $\|h\|_{H^0} = \|h\|_{L^2}$.

By the similar method of deducing  \eqref{estv-k}, we can infer that
\begin{aligno}
& \epsilon^2 \dt    \sum\limits_{m=1}^{s} \tfrac{8c_1^{s-m}}{3}{H}_{v,\epsilon}^{m}(t)    +  \tfrac{3}{4} \|(\nabla_v f_\epsilon, \nabla_v  h_\epsilon)\|_{H^{s-1}_\Lambda}^2 \lesssim \|(f_\epsilon,  h_\epsilon)\|_{H^{s}_x}^2  + \|  \epsilon E_\epsilon \|_{H^{s-1}_x}^2   + \hd.
\end{aligno}

\end{proof}

\subsection{The dissipative estimates of the macroscopic parts}
Denoting
\[H_{c,\epsilon}^s:=\sum\limits_{k=1}^s \intps\left( \nabla_x \nabla_x^{k-1} f_\epsilon \sdot \nabla_v \nabla_x^{k-1} f_\epsilon + \nabla_x \nabla_x^{k-1}  h_\epsilon \sdot \nabla_v \nabla_x^{k-1}  h_\epsilon  \right) \bdv \bd x,    \]
the following lemma is to provide the dissipative energy estimates of $f_\epsilon$, $g_\epsilon$ with derivative to $x$.
\begin{lemma}
\label{lemmamacrox}
Under the assumptions  of Theorem \ref{theoremexi}, if $(f_\epsilon,  h_\epsilon, B_\epsilon, E_\epsilon)$ are solutions to \eqref{vmbtwolinear}, then
\begin{aligno}
\label{estlemmax}
& \epsilon \tdt H^s_{c,\epsilon} + \|\nabla(f_\epsilon, h_\epsilon)\|_{H^{s-1}_x}^2 - \delta_1\|\nabla_v(f_\epsilon, h_\epsilon)\|_{H^{s-1}_{\Lambda_x}}^2 - \delta_2 \|  E_\epsilon\|_{H^{s-1}_x}^2 \\
&  + \reps \|  n_\epsilon\|_{H^{s-1}_x}^2   \lesssim \tfrac{1}{\delta} \|( f_\epsilon^\perp, h_\epsilon^\perp)\|_{H^s_{\Lambda_x}}^2 + \hd.
\end{aligno}
\end{lemma}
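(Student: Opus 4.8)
The plan is to run the classical ``mixed-norm'' argument adapted to the $H^s_x$ setting: use the anti-symmetric combination $\int\nabla_x\nabla_x^{k-1}f_\epsilon\cdot\nabla_v\nabla_x^{k-1}f_\epsilon$ (and the analogous $h_\epsilon$ term) to extract a dissipation of the macroscopic part $\nabla\bdp f_\epsilon$, $\nabla\bdp h_\epsilon$, at the price of lower-order terms and a controllable portion of $\nabla_v$-dissipation and of the electromagnetic field. First I would apply $\nabla_x^{k}=\nabla_x\nabla_x^{k-1}$ to the $f_\epsilon$-equation in \eqref{vmbtwo-rewrite} and $\nabla_x^{k-1}\nabla_v$ to the same equation, pair them crosswise, and likewise for $h_\epsilon$; upon integrating over the phase space and adding, the time derivative reproduces $\epsilon\,\tdt H^s_{c,\epsilon}$ after multiplying by $\epsilon$ (the $\epsilon$ being forced by the $\reps\vdot$ transport scaling, exactly as the factor $\epsilon$ in front of the mixed norm in \eqref{est-z2}). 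The transport term $\reps\vdot f_\epsilon$ paired against $\nabla_v$ produces, after integrating by parts in $v$, the positive term $\reps\|\nabla_x\nabla_x^{k-1}f_\epsilon\|_{L^2}^2$, whose macroscopic piece is $\reps\|\nabla\bdp f_\epsilon\|^2_{L^2}\gtrsim\reps(\|\nabla\rho_\epsilon\|^2+\|\nabla u_\epsilon\|^2+\|\nabla\theta_\epsilon\|^2)$; the microscopic piece $\reps\|\nabla f_\epsilon^\perp\|^2$ is what gets absorbed into the $\tfrac1\delta\|(f_\epsilon^\perp,h_\epsilon^\perp)\|^2_{H^s_{\Lambda_x}}$ on the right.

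Next I would treat the collision terms: $\repst\mathcal{L}(f_\epsilon)$ and $\repst\mathsf{L}(h_\epsilon)$ paired with $\nabla_v\nabla_x^{k-1}f_\epsilon$, $\nabla_v\nabla_x^{k-1}h_\epsilon$ are handled via \textbf{H1}--\textbf{H3}: the coercive part $\Lambda$ is bounded above by $\|\cdot\|_{L^2_\Lambda}$ (so it can be thrown into the $\delta_1\|\nabla_v(f_\epsilon,h_\epsilon)\|^2_{H^{s-1}_{\Lambda_x}}$ term with a Young inequality, using that one factor is a microscopic-type quantity of size $\epsilon$), and $\mathbf K$, $\mathbf\Phi$ are controlled by \textbf{H2}. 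The transport term paired against $\nabla_x\nabla_x^{k-1}$ of the \emph{same} variable gives a term that integrates by parts into $\nabla_v$-derivatives again; its anti-symmetric structure is precisely what leaves $\|\nabla\bdp\|^2$ behind. For $h_\epsilon$ the extra term $-\reps E_\epsilon\cdot v$ in its equation, paired with $\nabla_x\nabla_x^{k-1}h_\epsilon$ and $\nabla_v\nabla_x^{k-1}h_\epsilon$, is where $E_\epsilon$ enters: pairing with $\nabla_v$ gives $\pm\reps\int v\,\nabla_x^k E_\epsilon\cdot\nabla_x^k h_\epsilon$ and pairing with $\nabla_x$-part gives a moment of $\nabla_x^{k}E_\epsilon$ against $\nabla_x^{k} n_\epsilon$ or $u_\epsilon$; after Young's inequality these produce the $-\delta_2\|E_\epsilon\|^2_{H^{s-1}_x}$ defect together with contributions to the good $\reps\|n_\epsilon\|^2_{H^{s-1}_x}$ and to $\hd$. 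The nonlinear terms $\reps\Gamma$ and the Lorentz nonlinearity $-\lrf\cdot\nabla_v(\m h_\epsilon)$ are bounded exactly as $D_1$ and $D_n$ were in Lemma \ref{lemmaonlyx}, i.e. by $\hd$, using \textbf{H4} and the integration-by-parts trick that exploits $\divg E_\epsilon=\reps n_\epsilon$ (written as $\divg(\epsilon E_\epsilon)=n_\epsilon$) and the antisymmetry of $v\times B_\epsilon$ under $\nabla_v$-integration by parts.

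Summing over $1\le k\le s$ and collecting the constants, the positive macroscopic dissipation $\reps\|\nabla\bdp f_\epsilon\|^2+\reps\|\nabla\bdp h_\epsilon\|^2$ must be converted into $\|\nabla(f_\epsilon,h_\epsilon)\|^2_{H^{s-1}_x}+\reps\|n_\epsilon\|^2_{H^{s-1}_x}$; this uses $\|\nabla f_\epsilon\|^2\le\|\nabla\bdp f_\epsilon\|^2+\|\nabla f_\epsilon^\perp\|^2$ together with throwing the microscopic remainder onto the right-hand side, and the Poincaré-type bound $\|n_\epsilon\|\lesssim\|\nabla\bdp h_\epsilon\|$ is immediate since $n_\epsilon=\bdp h_\epsilon$. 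I expect the main obstacle to be \textbf{bookkeeping the $\epsilon$-powers so that no genuinely singular term survives}: the cross terms coming from $-\reps E_\epsilon\cdot v$ and from the $\reps v\times B_\epsilon$ Lorentz piece carry a $\reps$ that must be split, writing $\reps=\reps\cdot 1$ and putting the ``bad'' factor on a quantity that is $O(\epsilon)$ in $\sqrt{\mathcal H^s_\epsilon}$ (e.g. $\reps f^\perp_\epsilon$, $\epsilon E_\epsilon$, or $\epsilon\nabla_v f_\epsilon$), exactly the maneuver flagged in Remark \ref{remark-diff} and Sec. \ref{sec-difficulty}; only by always pairing a $\reps$ against such a small object do the cross terms close as $\hd$ or as the explicitly allowed defects $\delta_1\|\nabla_v(\cdot)\|^2_{H^{s-1}_{\Lambda_x}}$, $\delta_2\|E_\epsilon\|^2_{H^{s-1}_x}$ rather than as something uncontrollable. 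A secondary technicality is that the mixed norm $H^s_{c,\epsilon}$ is not sign-definite, but that is harmless here: the lemma only asserts a differential inequality, and the equivalence with $\mathcal H^s_\epsilon$ is deferred to the global combination (the constants $c_l,c_u$ and the equivalent norm $\tilde H^s_\epsilon$ of the remark after Theorem \ref{theoremexi}).
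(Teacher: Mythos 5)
Your proposal is correct and follows essentially the same route as the paper: the cross-paired mixed norm $H^s_{c,\epsilon}$, the exact transport identity yielding $\|\nabla_x\nabla_x^{k-1}(f_\epsilon,h_\epsilon)\|_{L^2}^2$, Young's inequality on the $\reps\mathcal{L}$, $\reps\mathsf{L}$ terms using that these operators see only $f_\epsilon^\perp$, $h_\epsilon^\perp$, the $E_\epsilon\sdot v$ pairing combined with $\epsilon\,\divg E_\epsilon=n_\epsilon$ to generate $\reps\|n_\epsilon\|^2_{H^{s-1}_x}$ and the $\delta_2\|E_\epsilon\|^2_{H^{s-1}_x}$ defect, and the Lorentz and quadratic terms bounded by $\hd$ exactly as in Lemma \ref{lemmaonlyx}. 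The only loose point is attributing part of the good $\reps\|n_\epsilon\|^2_{H^{s-1}_x}$ term to Young's inequality and to a Poincar\'e bound; in the paper it comes entirely from the exact identity \eqref{estmacroe-2} via the Gauss law, but this does not alter the argument.
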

\begin{proof}
According to the definition of $H^s_{c,\epsilon}$,  we can infer that
\begin{aligno}
\label{estmacrox-0}
& \epsilon   \tdt  H_{c,\epsilon}^s + \sum\limits_{k=1}^s\intps\left( \nabla_v \nabla_x^{k-1} \left( \vdot f_\epsilon\right) \sdot \nabla_x \nabla_x^{k-1} f_\epsilon + \nabla_x \nabla_x^{k-1} \left( \vdot f_\epsilon\right) \sdot \nabla_v \nabla_x^{k-1} f_\epsilon  \right) \bdv \bd x \\
& + \sum\limits_{k=1}^s\intps\left( \nabla_v \nabla_x^{k-1} \left( \vdot h_\epsilon\right) \sdot \nabla_x \nabla_x^{k-1} h_\epsilon + \nabla_x \nabla_x^{k-1} \left( \vdot h_\epsilon\right) \sdot \nabla_v \nabla_x^{k-1} h_\epsilon  \right) \bdv \bd x  \\
& - \reps \sum\limits_{k=1}^s \intps\left( \nabla_x \nabla_x^{k-1} \mathcal{L}(f_\epsilon) \sdot \nabla_v \nabla_x^{k-1} f_\epsilon + \nabla_x \nabla_x^{k-1} \mathsf{L}(h_\epsilon) \sdot \nabla_v \nabla_x^{k-1} h_\epsilon  \right) \bdv \bd x \\
& - \reps  \sum\limits_{k=1}^s\intps\left( \nabla_v \nabla_x^{k-1} \mathcal{L}( f_\epsilon) \sdot \nabla_x \nabla_x^{k-1}  f_\epsilon + \nabla_v \nabla_x^{k-1} \mathsf{L}( h_\epsilon) \sdot \nabla_x \nabla_x^{k-1}  h_\epsilon  \right) \bdv \bd x \\
& =  \sum\limits_{k=1}^s\intps \left( \nabla_v \nabla_x^{k-1}(v\!\cdot\!E_\epsilon) \sdot \nabla_x \nabla_x^{k-1}  h_\epsilon + \nabla_x \nabla_x^{k-1}(v\!\cdot\!E_\epsilon) \sdot \nabla_v \nabla_x^{k-1}  h_\epsilon \bdv \right)\bdv \bd x  \\
& +  \sum\limits_{k=1}^s\epsilon \intps \left( \nabla_x \nabla_x^{k-1}N_1 \sdot \nabla_v \nabla_x^{k-1} f_\epsilon +  \nabla_v \nabla_x^{k-1}N_1 \sdot \nabla_x \nabla_x^{k-1}  h_\epsilon \right)  \bdv \bd x\\
& + \sum\limits_{k=1}^s\epsilon \intps \left( \nabla_x \nabla_x^{k-1}N_2 \sdot \nabla_v \nabla_x^{k-1} f_\epsilon +  \nabla_v \nabla_x^{k-1}N_2 \sdot \nabla_x \nabla_x^{k-1}  h_\epsilon \right)  \bdv \bd x\\
&=M_1+ M_2+ M_3.
\end{aligno}
In what follows, we try to estimate each term in \eqref{estmacrox-0}  for each $k$ first and then sum them up. For the second term in the first line of \eqref{estmacrox-0}, denoting
\[ I : = \intps  \nabla_x \nabla_x^{k-1} \left( \vdot f_\epsilon\right) \sdot \nabla_v \nabla_x^{k-1} f_\epsilon    \bdv \bd x,   \]
by integration by parts (three times), we can infer that
\begin{align}
I &  = \intps  (v^j\sdot\nabla_{x_j x_i}^2 \nabla_x^{k-1} f_\epsilon)\partial_{v_i} \nabla_x^{k-1} f_\epsilon \bdv \bd x  = \|\nabla_x \nabla_x^{k-1}f_\epsilon\|_{L^2}^2 - \|v \sdot \nabla_x \nabla^{k-1}_x f_\epsilon\|_{L^2} - I.
\end{align}
On the other hand, we can infer that
\[\intps \nabla_v \nabla_x^{k-1}  ( \vdot f_\epsilon)  \sdot \nabla_x \nabla_x^{k-1} f_\epsilon   \bdv \bd x    =  \intps \nabla_x \nabla_x^{k-1}  ( \vdot f_\epsilon)  \sdot \nabla_v \nabla_x^{k-1} f_\epsilon   \bdv \bd x  + \|v \sdot \nabla_x \nabla^{k-1}_x f_\epsilon\|_{L^2}.  \]
All together,  it follows that
\begin{aligno}
\label{estmacroxd}
 & \intps\left( \nabla_v \nabla_x^{k-1} \left( \vdot f_\epsilon\right) \sdot \nabla_x \nabla_x^{k-1} f_\epsilon + \nabla_x \nabla_x^{k-1} \left( \vdot f_\epsilon\right) \sdot \nabla_v \nabla_x^{k-1} f_\epsilon  \right) \bdv \bd x  = \| \nabla_x \nabla^{k-1}_x f_\epsilon\|_{L^2}.
\end{aligno}
For the terms in the third and forth line of \eqref{estmacrox-0}, there exists coefficent $\reps$. The ideal is to use the microscopic part( see \eqref{estlemmaonlyx}) to deal with this difficulty.  Indeed, for the second line,  noticing that
\[ \nabla_x \nabla_x^{k-1} \mathcal{L}(f_\epsilon) =  \mathcal{L}( \nabla_x \nabla_x^{k-1} f_\epsilon)   = \mathcal{L}( \nabla_x \nabla_x^{k-1} f_\epsilon^\perp),    \]
by H\"older's inequality, we can infer that
\begin{aligno}
\label{estmacrom1}
& \vert \reps \intps\left( \nabla_x \nabla_x^{k-1} \mathcal{L}(f_\epsilon) \sdot \nabla_v \nabla_x^{k-1} f_\epsilon + \nabla_x \nabla_x^{k-1} \mathsf{L}(h_\epsilon) \sdot \nabla_v \nabla_x^{k-1} h_\epsilon  \right) \bdv \bd x   \vert \\
&  \le \tfrac{1}{2 \delta \epsilon^2} \|\nabla_x^k( f_\epsilon^\perp, h_\epsilon^\perp)\|_{L^2_\Lambda}^2 + \tfrac{\delta}{2}\|\nabla_v \nabla_x^{k-1} (f_\epsilon, g_\epsilon)\|_{L^2_\Lambda}^2,
\end{aligno}
where $\delta$ is a positive constant to be chosen later.

The third line in \eqref{estmacrox-0} is more complicated.  Decomposing $f_\epsilon$ into macroscopic part and microscopic part, we can infer
\begin{align*}
 \intps  \nabla_v \nabla_x^{k-1} \mathcal{L}(f_\epsilon) \sdot \nabla_x \nabla_x^{k-1} f_\epsilon   \bdv \bd x
& =     \intps  \nabla_v \nabla_x^{k-1} \mathcal{L}(f_\epsilon) \sdot \nabla_x \nabla_x^{k-1} \bdp f_\epsilon   \bdv \bd x   \\
& +    \intps  \nabla_v \nabla_x^{k-1} \mathcal{L}( f_\epsilon  ) \sdot \nabla_x \nabla_x^{k-1}   f_\epsilon^\perp   \bdv \bd x .
\end{align*}
After integration by parts two times, we can infer that
\begin{align*}
\intps  \nabla_v \nabla_x^{k-1} \mathcal{L}(f_\epsilon) \sdot \nabla_x \nabla_x^{k-1} \bdp f_\epsilon   \bdv \bd x &  = \intps  \nabla_x \nabla_x^{k-1} \mathcal{L}(f_\epsilon) \sdot \nabla_v\nabla_x^{k-1} \bdp f_\epsilon   \bdv \bd x \\
& - \intps  \nabla_x \nabla_x^{k-1} \mathcal{L}(f_\epsilon) \sdot  v\nabla_x^{k-1} \bdp f_\epsilon   \bdv \bd x .
\end{align*}
Then similar to \eqref{estmacrom1}, we can infer
\begin{aligno}
\label{estmacrom2}
& \vert \reps \intps\left( \nabla_v \nabla_x^{k-1} \mathcal{L}( f_\epsilon) \sdot \nabla_x \nabla_x^{k-1}  f_\epsilon + \nabla_v \nabla_x^{k-1} \mathsf{L}( f_\epsilon) \sdot \nabla_x \nabla_x^{k-1}  f_\epsilon  \right) \bdv \bd x \vert    \\
&  \le \tfrac{1}{ 2\delta_1 \epsilon^2} \|\nabla_x^k( f_\epsilon^\perp, h_\epsilon^\perp)\|_{L^2_\Lambda}^2 +  \tfrac{\delta_1}{2} \|\nabla_v \nabla_x^{k-1} (f_\epsilon, g_\epsilon)\|_{L^2_\Lambda}^2,
\end{aligno}
For $M_1$  in the right hand of \eqref{estmacrox-0}, by integration by parts, it follows that
\begin{aligno}
\label{estmacroe-1}
M_1& =   \intps \left( -2\nabla_v \nabla_x^{k-1}(v\!\cdot\!E_\epsilon) \sdot \nabla_x \nabla_x^{k-1}  h_\epsilon +   \nabla_x^{k-1}(v\!\cdot\!E_\epsilon)  (v \sdot \nabla_x \nabla_x^{k-1}  h_\epsilon)  \bdv \right)\bdv \bd x \\
& =    \intps \left(  -2 \nabla_x^{k-1} E_\epsilon \sdot \nabla_x \nabla_x^{k-1}  h_\epsilon + \nabla_x^{k-1}(v\!\cdot\!E_\epsilon) \sdot v \sdot \nabla_x \nabla_x^{k-1}  (n_\epsilon + h_\epsilon^\perp) \bdv \right)\bdv \bd x.
\end{aligno}
Recalling that
\[\divg E_\epsilon = \reps n_\epsilon,  \]
and by simple computation, we can infer that
\begin{align}
\label{estmacroe-2}
\intps \left(  -2 \nabla_x^{k-1} E_\epsilon \sdot \nabla_x \nabla_x^{k-1}  h_\epsilon + \nabla_x^{k-1}(v\!\cdot\!E_\epsilon) \sdot v \sdot \nabla_x \nabla_x^{k-1}   n_\epsilon  \bdv \right)\bdv \bd x = \reps \|\nabla_x^{k-1} n_\epsilon\|_{L^2}^2.
\end{align}
Combining \eqref{estmacroe-1} and \eqref{estmacroe-2} and by H\"older's inequality, we can infer
\begin{align}
\label{estmacroxm1}
M_1 + \reps \|  n_\epsilon\|_{H^{s-1}}^2 - \delta_2 \|  E_\epsilon\|_{H^{s-1}}^2 \lesssim \tfrac{1}{\delta} \|\nabla_x^{k-1}h_\epsilon^\perp\|_{L^2}^2.
\end{align}
For the left two terms in the right hand of \eqref{estmacrox-0}, by   he same trick to that of deducing \eqref{estonlyxD2} and \eqref{estmacroxd}, we can infer that
\begin{align}
\label{estmacrom23}
M_2 + M_3 \lesssim \hd.
\end{align}

In the light of \eqref{estmacrox-0}, \eqref{estmacroxd},  \eqref{estmacrom2}, \eqref{estmacrom1} and \eqref{estmacrom23}, we can infer that
\begin{aligno}
\label{estmacrox-1}
& \epsilon \tdt H^s_{c,\epsilon} + \|\nabla(f_\epsilon, h_\epsilon)\|_{H^{s-1}_x}^2 - \delta_1\|\nabla_v(f_\epsilon, h_\epsilon)\|_{H^{s-1}_{\Lambda_x}}^2 - \delta_2 \|  E_\epsilon\|_{H^{s-1}_x}^2 \\
&  + \reps \|  n_\epsilon\|_{H^{s-1}_x}^2   \lesssim \tfrac{1}{\delta} \|( f_\epsilon^\perp, h_\epsilon^\perp)\|_{H^s_{\Lambda_x}}^2 + \hd.
\end{aligno}

\end{proof}

\subsection{The dissipative estimates of the electromagnetic parts}
Denoting
\begin{aligno}
\label{normeletro}
H^{s}_{\epsilon,e}(t) = \tdt \left[  \epsilon^2 \sum\limits_{k=0}^{s-1} \intt      \nabla_x^k \tilde j_\epsilon \sdot    \nabla_x^k E_\epsilon  \bd x +   \sum\limits_{k=0}^{s-2} \intt     \left( \epsilon^2 \curl \nabla_x^k \tilde j_\epsilon \sdot  \curl  \nabla_x^k E_\epsilon   +  \epsilon \sdot \tfrac{3\sigma}{4} \nabla_x^k E_\epsilon\sdot \curl \nabla_x^k B_\epsilon \right)\bd x  \right],
\end{aligno}
the following lemma provides the dissipative energy estimates of $B_\epsilon$ and $E_\epsilon$.
\begin{lemma}
\label{lemma-curl-e}
Under the assumptions  of Theorem \ref{theoremexi}, if $(f_\epsilon,  h_\epsilon, B_\epsilon, E_\epsilon)$ are solutions to \eqref{vmbtwolinear}, then
\begin{aligno}
\label{estdissipative-ele}
-\tdt H^s_{\epsilon,e} +   \tfrac{3 \sigma}{4} \sum\limits_{k=0}^{s-1} \| \nabla_x^k E_\epsilon\|_{L^2}^2 +    \tfrac{  \sigma}{4}\sum\limits_{k=1}^{s-2}  \|\curl \nabla_x^{k-1} B_\epsilon\|_{L^2}^2 \lesssim \repst \| h_\epsilon^\perp\|_{H^s_x}^2 + \hd.
\end{aligno}
 \end{lemma}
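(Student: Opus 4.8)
The plan is to prove a damped-Maxwell estimate whose engine is the ``generalized Ohm's law'' relation sketched in \eqref{idea}; the auxiliary functional $H^s_{\epsilon,e}$ of \eqref{normeletro} is tailored precisely to it. The first step is to test the $h_\epsilon$-equation of \eqref{vmbtwolinear} against $\tilde v\,\m$ and integrate in $v$. By $\mathsf L\tilde v=v$ and the self-adjointness of $\mathsf L$, the term $\repst\mathsf L(h_\epsilon)$ contributes a multiple of $j_\epsilon$; by the isotropy of the collision kernel $\tilde v$ is parallel to $v$, so $\intv v\otimes\tilde v\bdv=\sigma\,\mathbb I$, which turns $-\repst(E_\epsilon\sdot v)$ into the damping term $-\repst\sigma E_\epsilon$; the transport term becomes $\reps\grad\sdot\intv(v\otimes\tilde v)h_\epsilon\bdv=\reps\sigma\grad n_\epsilon+\reps\grad\sdot\intv(v\otimes\tilde v)h_\epsilon^\perp\bdv$ after splitting $h_\epsilon=n_\epsilon+h_\epsilon^\perp$; and the source $N_2$ of \eqref{vmbtwo-rewrite} produces moment source terms. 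After multiplying by the power of $\epsilon$ dictated by the weights in \eqref{normeletro}, this yields an evolution law for $\tilde j_\epsilon$ in which $E_\epsilon$ appears as an $O(1)$ damping term.

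Next I would apply $\nabla_x^k$ to this relation, pair it with $\nabla_x^k E_\epsilon$ in $L^2_x$, and sum over $0\le k\le s-1$. The damping term gives $\tfrac{3\sigma}{4}\sum_k\|\nabla_x^k E_\epsilon\|_{L^2}^2$ (after absorbing a small multiple into the error), while the $\partial_t\tilde j_\epsilon$ term produces, modulo a full time derivative feeding the first cross term of \eqref{normeletro}, a remainder $\propto\intt\nabla_x^k\tilde j_\epsilon\sdot\partial_t\nabla_x^k E_\epsilon$. Using the Amp\`ere law $\epsilon\partial_t E_\epsilon=\curl B_\epsilon-j_\epsilon$, this becomes an $E$–$B$ cross term $\propto\intt\curl\nabla_x^k\tilde j_\epsilon\sdot\nabla_x^k B_\epsilon$ (to absorb it one differentiates the $\tilde j_\epsilon$-law with a $\curl$ applied, which is what the second cross term of \eqref{normeletro} is for) plus a harmless $\tilde j_\epsilon\sdot j_\epsilon$ term. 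The genuine dissipation of $\curl B_\epsilon$ is then extracted from the classical Maxwell cross term $\intt\nabla_x^k E_\epsilon\sdot\curl\nabla_x^k B_\epsilon$: its time derivative, computed with $\epsilon\partial_t E_\epsilon=\curl B_\epsilon-j_\epsilon$ and $\partial_t B_\epsilon=-\curl E_\epsilon$, equals $\|\curl\nabla_x^k B_\epsilon\|_{L^2}^2$ minus a $\nabla_x^k j_\epsilon\sdot\curl\nabla_x^k B_\epsilon$ term minus $\epsilon\|\curl\nabla_x^k E_\epsilon\|_{L^2}^2$, the last carrying an extra $\epsilon$ and hence lower order. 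Because forming $\curl B_\epsilon$ and $\curl\tilde j_\epsilon$ costs one $x$-derivative, the $\curl$- and $B$-sums only run to $s-2$, which is exactly why $\mathcal D_\epsilon^s$ controls $(E_\epsilon,B_\epsilon)$ only in $H^{s-1}_x$. The exact signs and $\epsilon$-powers in \eqref{normeletro} are arranged so that all these time-derivative pieces assemble into $-\tdt H^s_{\epsilon,e}$ while leaving the stated dissipation with a favourable sign.

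It remains to bound the error terms. The decisive observation is that $v\in\mathrm{Ker}(\mathsf L)^\perp$, so $j_\epsilon$ and $\tilde j_\epsilon$ see only $h_\epsilon^\perp$; consequently $\|j_\epsilon\|_{H^{s-1}_x}+\|\tilde j_\epsilon\|_{H^{s-1}_x}\lesssim\reps\|h_\epsilon^\perp\|_{H^{s-1}_{\Lambda_x}}$, and the $\tilde j_\epsilon\sdot j_\epsilon$- and $\nabla_x^k j_\epsilon\sdot\curl\nabla_x^k B_\epsilon$-type terms (with their $\epsilon$ weights) are controlled by $\repst\|h_\epsilon^\perp\|_{H^s_x}^2+\hd$; the microscopic transport remainder $\reps\grad\sdot\intv(v\otimes\tilde v)h_\epsilon^\perp\bdv$ paired with $\nabla_x^k E_\epsilon$ is handled the same way. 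The term $\reps\sigma\grad n_\epsilon$, after integration by parts against $\nabla_x^k E_\epsilon$ and use of $\divg E_\epsilon=\reps n_\epsilon$, becomes a favourably signed multiple of $\repst\|\nabla_x^k n_\epsilon\|_{L^2}^2$ and causes no harm. Finally the moment source terms from $N_2$ — namely $E_\epsilon\sdot v\,f_\epsilon$, $-(E_\epsilon+\reps v\times B_\epsilon)\sdot\nabla_v f_\epsilon$ and $\reps\Gamma(h_\epsilon,f_\epsilon)$ — are estimated by H\"older's inequality, the Sobolev embedding $H^{s-1}\hookrightarrow L^\infty$ ($s\ge3$), assumption {\bf H4}, and the uniform a priori bounds, exactly as $D_1$ and $D_n$ were treated in Lemma \ref{lemmaonlyx}, producing the $\hd$ contribution.

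The main obstacle is the $E$–$B$ coupling: extracting true dissipation of $\curl B_\epsilon$ while keeping every $\epsilon$-weight consistent with \eqref{normeletro}, and ensuring that each error term involving $(E_\epsilon,B_\epsilon)$ at top order carries a compensating factor — an extra $\epsilon$, a microscopic factor $h_\epsilon^\perp$, or a favourable sign — so that it is absorbed into $\repst\|h_\epsilon^\perp\|_{H^s_x}^2+\hd$ rather than into the borderline quantity $\|(E_\epsilon,B_\epsilon)\|_{H^{s-1}_x}^2$ itself. This delicacy is precisely what forces the three-part structure of $H^s_{\epsilon,e}$ and the loss of one $x$-derivative on the electromagnetic field.
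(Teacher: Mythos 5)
Your proposal follows essentially the same route as the paper: you derive the evolution law for $\tilde j_\epsilon$ with the $\sigma E_\epsilon$ damping term by testing the $h_\epsilon$-equation against $\tilde v$, pair it (and its curl) with $E_\epsilon$ to produce the electric-field dissipation, extract the $\curl B_\epsilon$ dissipation from the classical cross term $\epsilon\intt\nabla_x^k E_\epsilon\sdot\curl\nabla_x^k B_\epsilon$, and control every remainder through the microscopic character of $j_\epsilon,\tilde j_\epsilon$ and the a priori bounds — exactly the three-part structure of $H^s_{\epsilon,e}$ and the error analysis in the paper's proof. The only differences are presentational (e.g.\ how the curl cross term is motivated and the handling of the favourably signed $\nabla n_\epsilon$ contribution, which the paper kills with $\curl\nabla=0$), not mathematical.
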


\begin{proof}
Recalling that
  \[  \tilde{j}_\epsilon = \reps \intv h_\epsilon \tilde{v},~~\sigma = \tfrac{1}{3} \intv \tilde{v}\sdot v \bdv,~~\mathsf{L}(\tilde{v}) = v, \]
from the second equation of \eqref{vmbtwo-rewrite}, we can infer that $\tilde{j}_\epsilon$ satisfies the following equation:
\begin{align}
\label{equationJtilde}
\epsilon^2 \partial_t \tilde{j}_\epsilon\ +   \divg  \intv \tilde{v}\otimes v  h_\epsilon  \bdv  -  \sigma   E_\epsilon - j_\epsilon  =  \epsilon \intv \tilde v  N_1 \bdv.
\end{align}
There exists a ``damping'' term of $E_\epsilon$ in equation \eqref{equationJtilde}. On the other hand, the equations of $E_\epsilon$ and $B_\epsilon$ are
\begin{aligno}
\label{equationbe}
&\epsilon\partial_t E_\epsilon - \curl B_\epsilon = -   j_\epsilon, \\
&\partial_t B_\epsilon + \curl E_\epsilon =0.
\end{aligno}
The dissipative energy estimates of $E_\epsilon$ can be deduced by employing structrue of \eqref{equationJtilde} and \eqref{equationbe}. Indeed,  after applying $\curl \nabla_x^k$ to \eqref{equationJtilde} and \eqref{equationbe}, we can infer
\begin{aligno}
\label{est-curl-e-1}
& - \epsilon^2\intt \partial_t \curl \nabla_x^k \tilde j_\epsilon \sdot  \curl \nabla_x^k E_\epsilon \bd x -    \intt  \nabla\timess\left(\divg  \intv \tilde v \otimes v \nabla_x^k  h_\epsilon  \bdv\right) \sdot \curl \nabla_x^k E_\epsilon \bd x \\
& +  \sigma   \|\curl \nabla_x^k E_\epsilon\|_{L^2}^2 -  \intt  \curl \nabla_x^k {j}_\epsilon \sdot \curl \nabla_x^k E_\epsilon \bd x = \epsilon \intt \curl\intv \tilde v \nabla_x^k N_2 \bdv \sdot \curl \nabla_x^k E_\epsilon \bd x,
\end{aligno}
and
\begin{aligno}
\label{est-curl-e-2}
& - \epsilon^2 \intt  \curl \nabla_x^k \tilde j_\epsilon \sdot \partial_t \curl \nabla_x^k E_\epsilon \bd x +  \epsilon\intt \curl \curl \nabla_x^k B_\epsilon \sdot \curl \nabla_x^k \tilde j_\epsilon \bd x  - \epsilon \intt \curl \nabla_x^k j_\epsilon \sdot \curl \nabla_x^k \tilde{j}_\epsilon \bd x =0.
\end{aligno}
With the help of  \eqref{est-curl-e-1} and \eqref{est-curl-e-2}, we can infer that
 \begin{align}
\label{est-curl-3}
\begin{split}
&     -\epsilon^2 \tdt \intt     \sdot \curl \nabla_x^k \tilde j_\epsilon \sdot  \curl  \nabla_x^k E_\epsilon  \bd x  +  \sigma  \|\curl \nabla_x^k E_\epsilon\|_{L^2}^2 \\
 & =    \intt  \nabla\timess\left(\divg  \intv \tilde v\otimes v \nabla_x^k  h_\epsilon  \bdv\right) \sdot \curl \nabla_x^k E_\epsilon \bd x \\
 &  +  \epsilon \intt \curl \nabla_x^k j_\epsilon \sdot \curl \nabla_x^k \tilde{j}_\epsilon \bd x  -     \epsilon\intt \curl \curl \nabla_x^k B_\epsilon \sdot \curl \nabla_x^k \tilde j_\epsilon \bd x\\
 & - \epsilon \intt \curl\intv \tilde v \nabla_x^k N_2 \bdv \sdot \curl \nabla_x^k E_\epsilon \bd x + \intt  \curl \nabla_x^k {j}_\epsilon \sdot \curl \nabla_x^k E_\epsilon \bd x.
\end{split}
\end{align}
Noticing that
\[  \divg  \intv  \tilde{v}\otimes v  \nabla_x^k  h_\epsilon  \bdv = \divg  \intv  \tilde{v}\otimes v  \nabla_x^k  h_\epsilon^\perp  \bdv + \sigma \nabla n_\epsilon,  \]
then by H\"older's inequality, we can infer that
\begin{align*}
  \big\vert   \intt  \nabla\timess\left(\divg  \intv  \tilde{v}\otimes v  \nabla_x^k  h_\epsilon  \bdv\right) \sdot \curl \nabla_x^k  E_\epsilon \bd x\big\vert   \le C \|\nabla^{k+2}_x  h_\epsilon^\perp\|_{L^2}^2 + \tfrac{\sigma}{16} \|\curl \nabla_x^k E_\epsilon\|_{L^2}^2.
\end{align*}
Recalling that
\[  j_\epsilon = \reps \intv h_\epsilon^\perp v \bdv,~~ \tilde{j}_\epsilon = \intv \tilde{v}h_\epsilon^\perp\bdv,    \]
we can infer that
\begin{aligno}
\epsilon \intt \curl \nabla_x^k j_\epsilon \sdot \curl \nabla_x^k \tilde{j}_\epsilon \bd x - \epsilon\intt \curl \curl \nabla_x^k B_\epsilon \sdot \curl \nabla_x^k \tilde j_\epsilon \bd x + \intt  \curl \nabla_x^k {j}_\epsilon \sdot \curl \nabla_x^k E_\epsilon \bd x\\
\le C \repst \|\nabla_x^{k+1} h_\epsilon^\perp\|_{H^1_x}^2 + \tfrac{\sigma}{16}\|\curl \nabla_x^k (B_\epsilon,E_\epsilon)\|_{L^2}^2.
\end{aligno}

For the left term on the right hand of \eqref{est-curl-3}, noticing that $k \le s-2$, it follows that
\begin{aligno}
\label{esteng0}
& \epsilon \intt \curl\intv \tilde v \nabla_x^k N_2 \bdv \sdot \curl \nabla_x^k E_\epsilon \bd x \\
& = \epsilon \sdot \epsilon \intt\curl \intv \tilde{v}\nabla_x^k\left(   E_\epsilon\sdot v \sdot f_\epsilon   \right) \bdv \sdot   \curl\nabla_x^k E_\epsilon \bd x\\
& -  \intt\curl \intv \tilde{v}\nabla_x^k\left(    ( \epsilon E_\epsilon +    v \times B_\epsilon) \sdot \nabla_v f_\epsilon + \Gamma( h_\epsilon, f_\epsilon) \right)\bdv \sdot     \curl\nabla_x^k E_\epsilon \bd x \\
& \le C \hd +  \tfrac{\sigma}{16}\tfrac{\epsilon^2}{\epsilon^2}\|\curl \nabla_x^k E_\epsilon\|_{L^2}^2.
\end{aligno}
All together, we can infer that
\begin{aligno}
\label{estlemmade1}
&     -\epsilon^2 \tdt \intt     \sdot \curl \nabla_x^k \tilde j_\epsilon \sdot  \curl  \nabla_x^k E_\epsilon  \bd x  +   \tfrac{3 \sigma}{4}  \|\curl \nabla_x^k E_\epsilon\|_{L^2}^2 - \tfrac{  \sigma}{16}  \|\curl \nabla_x^k B_\epsilon\|_{L^2}^2 \\
& \lesssim \repst \|\nabla_x^{k+1} h_\epsilon^\perp\|_{H^1_x}^2 + \hd,
\end{aligno}
and
\begin{aligno}
\label{estlemmade2}
&     -\epsilon^2 \tdt \sum\limits_{k=0}^{s-2} \intt     \sdot \curl \nabla_x^k \tilde j_\epsilon \sdot  \curl  \nabla_x^k E_\epsilon  \bd x  +   \tfrac{3 \sigma}{4} \sum\limits_{k=0}^{s-2} \|\curl \nabla_x^k E_\epsilon\|_{L^2}^2 - \tfrac{  \sigma}{16}\sum\limits_{k=0}^{s-2}  \|\curl \nabla_x^k B_\epsilon\|_{L^2}^2 \\
& \lesssim \repst \| h_\epsilon^\perp\|_{H^s_x}^2 + \hd.
\end{aligno}
By the similar way of deducing \eqref{est-curl-3} and \eqref{estlemmade2}, we can infer that
\begin{aligno}
\label{estlemmade3}
&     -\epsilon^2 \tdt \sum\limits_{k=0}^{s-1} \intt      \nabla_x^k \tilde j_\epsilon \sdot    \nabla_x^k E_\epsilon  \bd x  +   \tfrac{3 \sigma}{4} \sum\limits_{k=0}^{s-1} \| \nabla_x^k E_\epsilon\|_{L^2}^2 - \tfrac{  \sigma}{8}\sum\limits_{k=1}^{s-1}  \|\curl \nabla_x^{k-1} B_\epsilon\|_{L^2}^2 \\
& \lesssim \repst \| h_\epsilon^\perp\|_{H^s_x}^2 + \hd.
\end{aligno}

Combing \eqref{estlemmade2} and \eqref{estlemmade3}, we can infer that
\begin{aligno}
\label{estlemmade}
&     - \epsilon^2 \sdot \tdt \left(    \sum\limits_{k=0}^{s-1} \intt      \nabla_x^k \tilde j_\epsilon \sdot    \nabla_x^k E_\epsilon  \bd x +  \sum\limits_{k=0}^{s-2} \intt     \sdot \curl \nabla_x^k \tilde j_\epsilon \sdot  \curl  \nabla_x^k E_\epsilon  \bd x \right) \\
&  +   \tfrac{3 \sigma}{4} \sum\limits_{k=0}^{s-1} \| \nabla_x^k E_\epsilon\|_{L^2}^2 + \tfrac{3 \sigma}{4} \sum\limits_{k=0}^{s-2} \|\curl \nabla_x^k E_\epsilon\|_{L^2}^2 - \tfrac{  \sigma}{4}\sum\limits_{k=1}^{s-2}  \|\curl \nabla_x^{k-1} B_\epsilon\|_{L^2}^2 \\
& \lesssim \repst \| h_\epsilon^\perp\|_{H^s_x}^2 + \hd.
\end{aligno}
To finish the proof of this lemma, we still need to obtain the dissipative energy estimates of magnetic field. From the equations of $B_\epsilon$ and $E_\epsilon$,    we can infer that
\begin{align*}
- \epsilon     \sdot \frac{\bd }{\bd t} \intt \nabla_x^k E_\epsilon\sdot \curl \nabla_x^k B_\epsilon\bd x +   \|\curl \nabla_x^k B_\epsilon\|_{L^2}^2 - \epsilon \|\curl \nabla_x^k E_\epsilon\|_{L^2}^2 =   \intt \nabla_x^k j_\epsilon \sdot \curl \nabla_x^k B_\epsilon \bd x.
\end{align*}
By the H\"older's inequality, we obtain that
\begin{align}
\label{estlemmab}
- \epsilon     \sdot \frac{\bd }{\bd t} \intt \nabla_x^k E_\epsilon\sdot \curl \nabla_x^k B_\epsilon\bd x +   \tfrac{3}{4}\|\curl \nabla_x^k B_\epsilon\|_{L^2}^2 - \epsilon \|\curl \nabla_x^k E_\epsilon\|_{L^2}^2 \lesssim \repst \|h^\perp_\epsilon\|_{H^s_x}^2.
\end{align}
Combining \eqref{estlemmade} and \eqref{estlemmab}, we complete the proof.

\end{proof}

\subsection{The whole estimates}
\label{sec-whole}
In the left of this section, based on \eqref{estlemmaonlyx}, \eqref{estlemmav}, \eqref{estlemmax}, \eqref{estlemmade} and \eqref{estlemmab}, we can obtain estimates like this
\[  \tdt \mathcal{H}_\epsilon^s  + \mathcal{D}_\epsilon^s \lesssim \hd.  \]
Then, if the initial data is some enough, we can obtain uniform estimates of solutions with respect to the Knudsen number.
\begin{lemma}
\label{lemmawhole}
Under the assumptions  of Theorem \ref{theoremexi}, if $(f_\epsilon,  h_\epsilon, B_\epsilon, E_\epsilon)$ are solutions to \eqref{vmbtwolinear}, then there exists some small enough constant $c_0$ such that
\begin{align}
\label{estlemmawhole}
\sup\limits_{ 0 \le s \le t}  {H}_\epsilon^s(t) +  \tfrac{1}{4} \int_0^t \left(   \|(   f_\epsilon,     h_\epsilon)\|_{H^{s}_\Lambda}^2 +  \tfrac{\epsilon^2}{\epsilon^2} \|(E_\epsilon, B_\epsilon)\|_{H^{s-1}_{ x}}^2 +  \|(   f_\epsilon^\perp,     h_\epsilon^\perp)\|_{H^{s}_{\Lambda_x}}^2 \right)(s)\bd s \le \tfrac {c_u}{c_l}  {H}_\epsilon^s(0),
\end{align}
where $c_l$ and $c_u$ are positive constants only dependent of the Sobolev embedding constant.
\end{lemma}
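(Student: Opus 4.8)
The plan is to combine the five lemmas of this section---Lemma \ref{lemmaonlyx}, Lemma \ref{lemmav}, Lemma \ref{lemmamacrox}, Lemma \ref{lemma-curl-e} and the auxiliary estimate \eqref{estlemmab}---into a single differential inequality for a carefully chosen equivalent energy functional $\tilde H^s_\epsilon$, and then close the argument by a continuation (bootstrap) argument using the smallness of the initial data. First I would form a linear combination
\[
\tilde H^s_\epsilon(t) := \|(f_\epsilon, h_\epsilon, \seps E_\epsilon, B_\epsilon)\|_{H^s_x}^2 + a\,\epsilon^2 \!\!\sum_{m=1}^s \tfrac{8 c_1^{s-m}}{3} H^m_{v,\epsilon}(t) + b\,\epsilon\, H^s_{c,\epsilon}(t) - d\, H^s_{\epsilon,e}(t),
\]
with small positive constants $a,b,d$ to be fixed in the right order. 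The point of the coefficients is that the ``cross'' terms $H^s_{c,\epsilon}$, $H^m_{v,\epsilon}$ and $H^s_{\epsilon,e}$ are not themselves nonnegative, but when multiplied by small enough constants and added to the leading term $\|(f_\epsilon, h_\epsilon, \seps E_\epsilon, B_\epsilon)\|_{H^s_x}^2$ (together with the $\epsilon^2\|(\nabla_v f_\epsilon,\nabla_v h_\epsilon)\|_{H^{s-1}}^2$ piece carried inside $H^m_{v,\epsilon}$), the resulting quantity is comparable to $\mathcal H^s_\epsilon(t)$, uniformly in $\epsilon\in(0,1]$: that is the relation $c_l \mathcal H^s_\epsilon \le \tilde H^s_\epsilon \le c_u \mathcal H^s_\epsilon$ referenced in \eqref{estclcu}. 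Verifying this comparability is a routine Cauchy--Schwarz bookkeeping: for instance $|\epsilon H^s_{c,\epsilon}|\lesssim \epsilon \|(f_\epsilon,h_\epsilon)\|_{H^s_x}\|(\nabla_v f_\epsilon,\nabla_v h_\epsilon)\|_{H^{s-1}}\lesssim \|(f_\epsilon,h_\epsilon)\|_{H^s_x}^2 + \epsilon^2\|(\nabla_v f_\epsilon,\nabla_v h_\epsilon)\|_{H^{s-1}}^2$, and similarly for the electromagnetic correction $H^s_{\epsilon,e}$ using $|\tilde j_\epsilon|\lesssim \|h_\epsilon^\perp\|_{\llv}$ plus $\epsilon$-powers.

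Next I would add the five lemma inequalities with the same weights $a,b,d$. On the dissipation side one gets, after absorbing the ``defect'' terms: from Lemma \ref{lemmaonlyx} the full $\reps\|(f_\epsilon^\perp,h_\epsilon^\perp)\|_{H^s_{\Lambda_x}}^2$ (which I would split, keeping $\repst$ of it for the macroscopic estimate and $\reps$ of it genuinely as part of $\mathcal D^s_\epsilon$); from Lemma \ref{lemmav}, after choosing $a$ small, $\tfrac34 a\|(\nabla_v f_\epsilon,\nabla_v h_\epsilon)\|_{H^{s-1}_\Lambda}^2$ at the cost of $\lesssim a\|(f_\epsilon,h_\epsilon)\|_{H^s_x}^2 + a\|\epsilon E_\epsilon\|_{H^{s-1}_x}^2$; from Lemma \ref{lemmamacrox}, after choosing $b$ small (so that $b\delta_1$ is dominated by the $\tfrac34 a$ term and $b/\delta$ by the microscopic dissipation), $b\|\nabla(f_\epsilon,h_\epsilon)\|_{H^{s-1}_x}^2 + b\reps\|n_\epsilon\|_{H^{s-1}_x}^2$ at the cost of $b\delta_2\|E_\epsilon\|_{H^{s-1}_x}^2$; and from Lemma \ref{lemma-curl-e} and \eqref{estlemmab}, after choosing $d$ small, $d\tfrac{3\sigma}{4}\|E_\epsilon\|_{H^{s-1}_x}^2 + d\tfrac\sigma4\|\curl B_\epsilon\|_{H^{s-3}}^2 + d\|\curl B_\epsilon\|_{L^2}^2$ (again modulo terms $\lesssim d\repst\|h_\epsilon^\perp\|_{H^s_x}^2$, which the microscopic dissipation absorbs). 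The crucial ordering is: first fix $a$ small against the bare $H^s_x$-norm and the $\reps\|(f^\perp,h^\perp)\|^2$ terms, then $b$ small against $a$ and against the microscopic dissipation, then $d$ small against $b$; finally the leftover $E_\epsilon$-loss $b\delta_2\|E_\epsilon\|^2_{H^{s-1}_x}$ is swallowed by $d\tfrac{3\sigma}{4}\|E_\epsilon\|^2_{H^{s-1}_x}$ provided $\delta_2$ (a free constant in Lemma \ref{lemmamacrox}) is chosen small after $b$ and $d$ are fixed. Recalling $\divg B_\epsilon = 0$ on $\mathbb T^3$ gives $\|\nabla B_\epsilon\|_{H^{s-2}_x}\sim\|\curl B_\epsilon\|_{H^{s-2}_x}$, so together with $\|B_\epsilon\|_{L^2}$ (controlled via the conservation law $\int B_\epsilon\,dx = 0$ from \eqref{conservationlaws} and Poincar\'e) one recovers the full $\|B_\epsilon\|_{H^{s-1}_x}^2$ on the dissipation side. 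The net outcome is
\[
\tdt \tilde H^s_\epsilon(t) + c\, \mathcal D^s_\epsilon(t) \le C\, \mathcal D^s_\epsilon(t)\sqrt{\mathcal H^s_\epsilon(t)}
\]
for some fixed $c,C>0$, since every right-hand side in the lemmas was already bounded by $\hd = \mathcal D^s_\epsilon(t)\sqrt{\mathcal H^s_\epsilon(t)}$.

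Finally I would run the continuity argument. Let $T^* := \sup\{T>0:\ \sup_{[0,T]}\mathcal H^s_\epsilon \le \eta_0^2\}$ for a small $\eta_0$ with $C\eta_0 \le c/2$; this set is nonempty by the initial smallness $H^s_\epsilon(0)\le c_0$ and the comparability $\tilde H^s_\epsilon\sim\mathcal H^s_\epsilon$, choosing $c_0$ so small that $\tfrac{c_u}{c_l}c_0 \le \tfrac12\eta_0^2$. On $[0,T^*)$ the differential inequality gives $\tdt\tilde H^s_\epsilon + \tfrac c2\mathcal D^s_\epsilon \le 0$, hence $\tilde H^s_\epsilon(t) + \tfrac c2\int_0^t\mathcal D^s_\epsilon \le \tilde H^s_\epsilon(0)$; translating back with $c_l\mathcal H^s_\epsilon \le \tilde H^s_\epsilon \le c_u\mathcal H^s_\epsilon$ yields exactly \eqref{esttheowhole}, namely $\sup_{[0,t]}H^s_\epsilon(t) + \tfrac12\int_0^t\mathcal D^s_\epsilon \le \tfrac{c_u}{c_l}H^s_\epsilon(0)$ (after rescaling $c$ to $1$ and relabeling), and in particular $\sup_{[0,T^*]}\mathcal H^s_\epsilon \le \tfrac{c_u}{c_l}c_0 < \eta_0^2$, so $T^* = +\infty$ by a standard open--closed argument; the dissipative time integral in \eqref{estlemmawhole} is read off from the surviving pieces of $\mathcal D^s_\epsilon$. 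The local existence needed to start the argument is the usual energy-method construction for the symmetrizable hyperbolic-parabolic system \eqref{vmbtwolinear} at fixed $\epsilon$, which I would quote rather than reprove. I expect the main obstacle to be the ordering and sign-tracking of the small constants $a,b,d,\delta,\delta_1,\delta_2$: because the correction functionals $H^s_{c,\epsilon}$, $H^s_{\epsilon,e}$ are sign-indefinite and carry inverse powers of $\epsilon$ in their dissipation, one must be careful that the constants hidden in $\lesssim$ in Lemmas \ref{lemmamacrox}--\ref{lemma-curl-e} are $\epsilon$-independent and that no term with a net negative power of $\epsilon$ survives uncancelled on the dissipation side; the anisotropic $\seps E_\epsilon$ weighting in $\mathcal H^s_\epsilon$ versus the unweighted $E_\epsilon$ in $\mathcal D^s_\epsilon$ is precisely what makes this matching delicate.
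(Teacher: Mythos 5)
Your proposal is correct and follows essentially the same route as the paper: both form an equivalent energy functional $\tilde H^s_\epsilon$ as a weighted combination of the leading $H^s_x$ norm with the sign-indefinite correction functionals $H^s_{c,\epsilon}$, $H^m_{v,\epsilon}$, $H^s_{\epsilon,e}$, sum the four lemmas with a hierarchy of constants so that every defect term is absorbed (with $\delta_2$ tuned against the $\sigma\|E_\epsilon\|^2$ dissipation and Poincar\'e plus the conservation laws recovering the zero modes of $f_\epsilon$, $h_\epsilon$, $B_\epsilon$), arrive at $\tdt\tilde H^s_\epsilon+\mathcal D^s_\epsilon\lesssim\mathcal D^s_\epsilon\sqrt{\mathcal H^s_\epsilon}$, and close via smallness of the initial data and the equivalence \eqref{estclcu}. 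The only cosmetic differences are that you parametrize the combination by small weights $a,b,d$ where the paper uses a large weight $c_8$ on the macroscopic block with $\delta_2=1/(4c_8)$, and that you spell out the continuation argument via $T^*$ which the paper leaves implicit.
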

\begin{proof}This Lemma can be proved by employing the Poincare's inequality and choosing proper consants.  Combining \eqref{estmacrox-1} and \eqref{estdissipative-ele} up and setting the $\delta_2=\tfrac{\sigma}{4}$ (in \eqref{estmacrox-1}), then  we can infer that
\begin{aligno}
\label{estwhole-e+v}
& \tdt \bigg( H^{s}_{c,\epsilon} - H^{s}_{\epsilon,e}\bigg)(t)  + \tfrac{3}{4}\|(\nabla_x f_\epsilon, \nabla_x  h_\epsilon)\|_{H^{s-1}_x}^2   + \reps \|n_\epsilon\|_{H^{s-1}_x}^2       - \delta_1 \|\nabla_v(f_\epsilon,  h_\epsilon)\|_{H^{s-1}_{\Lambda_x}}^2 \\
& + \tfrac{ \sigma}{2} \sum\limits_{k=0}^{s-1} \| \nabla_x^k E_\epsilon\|_{L^2}^2 +    \tfrac{  \sigma}{4}\sum\limits_{k=1}^{s-2}  \|\curl \nabla_x^{k-1} B_\epsilon\|_{L^2}^2 \lesssim \repst \| (f_\epsilon
^\perp,h_\epsilon^\perp)\|_{H^s_x}^2 + \hd.
 \end{aligno}
According to the definition of $\mathcal{D}_\epsilon^s$,  the dissipative energy estimates of $f_\epsilon$, $h_\epsilon$ and $B_\epsilon$ is not complete. We need to recover the $L^2$ estimates of $f_\epsilon$, $h_\epsilon$ and $B_\epsilon$. For the magnetic field $B_\epsilon$, from \eqref{estMeaninitial},
\[ \intt B_\epsilon(t) \bd x =0,,~~ \forall t \ge 0,  \]
on the other hand,
\[ \divg B_\epsilon =0, \]
then by Poincare's inequality, we can infer that
\begin{aligno} \label{estequalb} \sum\limits_{k=1}^{s-2} \|\curl \nabla_x^{k-1} B_\epsilon\|_{L^2}^2 \approx  \|B_\epsilon\|_{H^{s-1}_x}^2.
\end{aligno}
For $f_\epsilon$ and $h_\epsilon$, recalling that we can decompose $f_\epsilon$ and $h_\epsilon$ like this
\[ f_\epsilon = \bdp f_\epsilon + f_\epsilon^\perp,~~h_\epsilon = n_\epsilon + h_\epsilon^\perp, \]
from \eqref{estMeaninitial},  we can infer that
\[ \intt \bdp f_\epsilon(t) \bd x = - \epsilon  v \sdot \intt E_\epsilon\times B_\epsilon(t) \bd x    - \epsilon \tfrac{|v|^2-3}{6} \|(\sqrt{\epsilon} E_\epsilon, B_\epsilon)(t)\|_{L^2}^2,~~~~ \intt n_\epsilon(t) \bd x =0.  \]
Based on the above mean value,  it follow that
\begin{align}
\label{estpoing}
 \|(f_\epsilon,h_\epsilon)\|_{H^s_x}^2 \lesssim  \|\nabla_x(f_\epsilon,h_\epsilon)\|_{H^{s-1}_x}^2 + \|(f_\epsilon^\perp,h_\epsilon^\perp)\|_{H^s_x}^2 + \hd.
\end{align}
Combining \eqref{estwhole-e+v}, \eqref{estequalb} and \eqref{estpoing}, there exists some $c_6>0$ such that
\begin{aligno}
\label{estwhole-miss-v}
& \tdt \bigg( H^{s}_{c,\epsilon} - H^{s}_{\epsilon,e}\bigg)(t)  + c_6(\|(  f_\epsilon,    h_\epsilon)\|_{H^{s}_x}^2 +   \|(E_\epsilon, B_\epsilon)\|_{H^{s-1}_{ x}}^2)   + \reps \|n_\epsilon\|_{H^{s-1}_x}^2        - \delta_2 \|\nabla_v(f_\epsilon,  h_\epsilon)\|_{H^{s-1}_{\Lambda_x}}^2       \\
& \lesssim \repst \| (f_\epsilon
^\perp,h_\epsilon^\perp)\|_{H^s_x}^2 + \hd.
\end{aligno}
To get the whole $\mathcal{D}_\epsilon^s$,  \eqref{estwhole-miss-v} and \eqref{estlemmav} should be put together. From \eqref{estlemmav}, there exists some $c_7$ such that
\begin{aligno}
\label{estlemmav-whole}
& \epsilon^2 \dt    \sum\limits_{m=1}^{s} \tfrac{8c_1^{s-m}}{3}{H}_{v,\epsilon}^{m}(t)    +  \tfrac{3}{4} \|(\nabla_v f_\epsilon, \nabla_v  h_\epsilon)\|_{H^{s-1}_\Lambda}^2 -c_7 \|(f_\epsilon,  h_\epsilon)\|_{H^{s}_x}^2 - c_7 \|  \epsilon E_\epsilon \|_{H^{s-1}_x}^2   \lesssim \hd.
\end{aligno}
From \eqref{estwhole-miss-v} and \eqref{estlemmav-whole}, choosing $c_8$ and $\delta_2$ such that
\[ c_8 c_6 \ge c_7 + \tfrac{1}{2}, ~~c_8 \delta_2 = \tfrac{1}{4},~~ c_8 \ge 1,  \]
then there exists some $d_1 >0$ such that
\begin{aligno}
\label{estwhole-miss-micro}
& \tdt \bigg( 2c_8 H^{s}_{c,\epsilon} - 2c_8 H^{s}_{\epsilon,e} + \epsilon^2    2 \sum\limits_{m=1}^{s} \tfrac{8c_1^{s-m}}{3}{H}_{v,\epsilon}^{m}(t)\bigg)(t)  \\
& + \|(  f_\epsilon,    h_\epsilon)\|_{H^{s}_x}^2 +   \|(E_\epsilon, B_\epsilon)\|_{H^{s-1}_{ x}}^2   + \reps \|n_\epsilon\|_{H^{s-1}_x}^2     \\
& + \|\nabla_v(f_\epsilon,  h_\epsilon)\|_{H^{s-1}_{\Lambda_x}}^2       -  \tfrac{d_1}{\epsilon^2} \| (f_\epsilon^\perp,h_\epsilon^\perp)\|_{H^s_x}^2 \lesssim \hd.
\end{aligno}
Finally, from \eqref{estlemmaonlyx}, denoting
\begin{align}
\label{normequalhs}
 \mathcal{ \tilde H}_\epsilon^s :=  2c_8 \epsilon H^{s}_{c,\epsilon} - 2c_8 H^{s}_{\epsilon,e} +  2  \epsilon^2 \sum\limits_{m=1}^{s} \tfrac{8c_1^{s-m}}{3}{H}_{v,\epsilon}^{m} + d_2 \|  (f_\epsilon,  h_\epsilon, \seps E_\epsilon, B_\epsilon)\|_{H^s_x}^2,
\end{align}
where  $d_2$ are chosen to satisfy
\[d_2 \ge d_1 +1,~~, \mathcal{ \tilde H}_\epsilon^s \approx  \mathcal{  H}_\epsilon^s,  \]
then it follow that there exists some positive constant $d_3$ such that
\begin{aligno}
\label{estwhole-1}
& \tdt \tilde{H}_\epsilon^s + \mathcal{D}_\epsilon^s  \le d_3 \mathcal{D}^s_\epsilon(t) \sqrt{\mathcal{ \tilde H}_\epsilon^s}(t).
\end{aligno}
If the initial data satisfy
\begin{align}
H_\epsilon^s(0) \le c_0:=\tfrac{1}{4 d_3^2},
\end{align}
then we can infer that for any $ t>0$
\begin{aligno}
\sup\limits_{ 0 \le s \le t} \tilde{H}_\epsilon^s(t) +  \tfrac{1}{2} \int_0^t \mathcal{D}^s_\epsilon(\tau)\bd \tau \le \tilde{H}_\epsilon^s(0).
\end{aligno}
On the other hand,  $\tilde{H}_\epsilon^s$ is equivalent to $H_\epsilon^s$, i.e., there exist $0<c_l<1$ and $c_u>0$ such that
\begin{align}
\label{estclcu}
c_l \| {H_\epsilon^s} \le \tilde{H}_\epsilon^s \le c_u  H^s_\epsilon.
\end{align} Thus, we can infer that for any $ t >0$
\begin{align*}
\sup\limits_{ 0 \le s \le t}  {H}_\epsilon^s(t) +   \tfrac{1}{2} \int_0^t \mathcal{D}^s_\epsilon(\tau)\bd \tau \le \tfrac {c_u}{c_l}  {H}_\epsilon^s(0),~~\forall \epsilon \in  (0,1].
\end{align*}
We complete the proof of this lemma.
\end{proof}
\subsection{The existence of system \ref{vmbtwolinear} }
For each fixed $\epsilon$,  the existence of solutions to system \ref{vmbtwolinear} can be found in \cite{guo-2003-vmb-invention}. But it also can be obtained by employing  the following iteration system ($n \ge 1$):
\begin{align}
\label{vmb-appro}
\begin{cases}
\partial_t f_\epsilon^n +  \reps \vdot f_\epsilon^n  -   \repst \mathcal{L}(f_\epsilon^n)  + \tfrac{\epsilon E_\epsilon^{n-1} +   v\times B_\epsilon^{n-1}}{\m \epsilon} \sdot \nabla_v (\m  h_\epsilon^n) = \reps\Gamma(f_\epsilon^{n-1},f_\epsilon^{n-1}), \\
\partial_t  h_\epsilon^n +  \reps \vdot  h_\epsilon^n   - \tfrac{\epsilon}{\epsilon^2}  E_\epsilon^n \sdot v -   \repst \mathsf{L}( h_\epsilon^n)  +  \tfrac{\epsilon E_\epsilon^{n-1}    v\times B_\epsilon^{n-1}}{\m \epsilon} \sdot \nabla_v (\m f_\epsilon^n) = \reps\Gamma( h_\epsilon^{n-1},f_\epsilon^{n-1}), \\
\partial_t E_\epsilon^n - \curl B_\epsilon^n = -  j_\epsilon^n, \\
  \partial_t B_\epsilon^n + \curl E_\epsilon^n =0,\\
\divg B_\epsilon^n =0,~~\epsilon \divg E_\epsilon^n =  \intv  h_\epsilon^n\bdv,\\
(f_\epsilon^n, h_\epsilon^n, B_\epsilon^n, E_\epsilon^n)(0)=(f_\epsilon, h_\epsilon, B_\epsilon, E_\epsilon)(0),
\end{cases}
\end{align}
with
\begin{align*}
f^0_\epsilon=g^0=0,~~E^0_\epsilon=B^0_\epsilon=0.
\end{align*}
The approximate solutions can be constructed by iteration method. Then based on the uniform estimates (obtained by induction method), the solutions can be obtained by employing Rellich-Kondrachov compactness theorem.

\section{The proof of Theorem \ref{theoremlimit}}
\label{sec-limit}
In Sec.\ref{secEstimates}, for any $t>0$, the solution$(f_\epsilon, h_\epsilon, B_\epsilon, E_\epsilon)$ satisfy the following uniform estimate:
\begin{aligno}
\label{estwhole}
&\sup\limits_{ 0 \le s \le t}  \|(f_\epsilon, h_\epsilon,B_\epsilon, \sqrt\epsilon E_\epsilon)(s)\|_{H^s_x}^2 +   \int_0^t    \|(   f_\epsilon,     h_\epsilon)(\tau)\|_{H^{s}_\Lambda}^2 \bd \tau \\
& + \int_0^t \left(     \repst \|(   f_\epsilon^\perp,     h_\epsilon^\perp)\|_{H^{s}_{\Lambda_x}}^2 + \reps \|(n_\epsilon, \sqrt\epsilon B_\epsilon, \sqrt{\epsilon}E_\epsilon)\|_{H^{s-1}}^2\right)(\tau) \bd \tau \le C_0
\end{aligno}
Based on \eqref{estwhole}, we can verify the MHD limit of VMB system.

{\bf Step 1: the limit of $f_\epsilon$ and $ h_\epsilon$.}

First, noticing that
\[  \int_0^t  \left(   \|(   f_\epsilon,     h_\epsilon)(\tau)\|_{H^{s}_\Lambda}^2 \bd \tau  +     \repst \|(   f_\epsilon^\perp,     h_\epsilon^\perp)(\tau)\|_{H^{s}_{\Lambda_x}}^2  + \reps \|n_\epsilon(\tau)\|_{H^{s-1}}^2  \right) \bd \tau \le C_0,~~\forall t >0, \]
and recalling
\[ f_\epsilon = \rho_\epsilon + u_\epsilon \sdot v + \tfrac{|v|^2-3}{2}\theta_\epsilon + f_\epsilon^\perp,~~h_\epsilon = n_\epsilon + h^\perp_\epsilon,   \]
then we can infer that there exist   $\rho, ~~u,~~\theta,~~n$ belonging to $H^s_x $  space such that
\begin{align}
\label{estlimitn}
f_\epsilon \to f=\rho(t,x) + u(t,x) \sdot v + \tfrac{|v|^2-3}{2} \theta(t,x),~~ h_\epsilon \to n= 0,~~~~\text{in},~~ L^2\left((0,+\infty); H^{s-1}_{x}\right).
\end{align}
Furthermore,   there exist   $\{  B, E\} \subset H^s_x $  such that
\begin{align}
\label{estconvergenceMarco}
\rho_\epsilon \to \rho,~~u_\epsilon \to u,~~ \theta_\epsilon \to \theta,~~n_\epsilon \to 0,~~B_\epsilon \to B,~~E_\epsilon \to E,~~\text{in},~~ L^2((0,+\infty);H^{s-1}_x).
\end{align}
The next step is to verify that $(\rho, u, \theta, B, E)$ satisfies the  MHD system.

{\bf Step 2:   the limiting equation}

From \eqref{vmbtwo-rewrite}, based on  the local conservation laws, we can find that $\rho_\epsilon, u_\epsilon, \theta_\epsilon, B_\epsilon, E_\epsilon$ satisfy the following system:
\begin{align}
\label{fluid-app}
\begin{cases}
\partial_t \rho_\epsilon + \tfrac{1}{\epsilon} \divg u_\epsilon =0,\\
\partial_t u_\epsilon + \reps \divg \intv \hat{A} \mathcal{L}f_\epsilon \bdv + \reps \nabla_x (\rho_\epsilon + \theta_\epsilon )  =   n_\epsilon \sdot E_\epsilon +   j_\epsilon \timess B_\epsilon,\\
\partial_t \theta_\epsilon + \tfrac{2}{3\epsilon} \divg \intv \hat{B} \mathcal{L}f_\epsilon \bdv + \tfrac{2}{3\epsilon} \divg u_\epsilon  =  \epsilon \sdot \tfrac{2}{3} j_\epsilon \cdot E_\epsilon,\\
\epsilon \partial_t E_\epsilon - \curl B_\epsilon = -   j_\epsilon, \\
   \partial_t B_\epsilon + \curl E_\epsilon =0,\\
\divg B_\epsilon =0,~~ \epsilon \divg E_\epsilon =   n_\epsilon.
\end{cases}
\end{align}
where
\begin{aligno}
A(v) = v \otimes v - \tfrac{|v|^2}{3}\mathbb{I},~~B(v) = v ( \tfrac{|v|^2}{2} - \tfrac{5}{2}), ~\mathcal{L}\hat{A}(v) = A(v),~~\mathcal{L}\hat{B}(v) = B(v).
\end{aligno}
By the first equation of \eqref{fluid-app},    in the distributional sense
\begin{align}
\label{estConvergenceDivgu}
\divg u_\epsilon \to \divg u  =0.
\end{align}
Furthermore, recalling that
\[ j_\epsilon = \reps \intv h_\epsilon^\perp v \bdv,~~ \tilde{j} = \reps \intv h_\epsilon^\perp \tilde{v} \bdv, \]
from \eqref{estwhole}, we can infer that
\begin{align}
\label{estlimitj}
\int_0^\infty \|  ( j_\epsilon, \tilde{j}_\epsilon)(\tau)\|_{H^s_x}^2 \bd \tau \lesssim \epsilon^2.
\end{align}
Then from the forth equation of \eqref{fluid-app}, we can infer that
\begin{align}
\label{estfluid-j}
j_\epsilon \to j=\curl B,~~\text{in},~~ L^2\left((0,+\infty); H^{s-1}_{x}\right).
\end{align}

{\bf Step 3, the limiting equation of $u$.}

Furthermore, for   velocity and temperature equation in \eqref{fluid-app}, we can infer that
\begin{aligno}
\label{nsp-app}
\partial_t u_\epsilon + \reps \divg \intv \hat{A} \mathcal{L}f_\epsilon \bdv + \reps \nabla_x (\rho_\epsilon + \theta_\epsilon ) & =    n_\epsilon \sdot E_\epsilon +   j_\epsilon \timess B_\epsilon, \\
\partial_t \left(\tfrac{3}{5}\theta_\epsilon - \tfrac{2}{5}\rho_\epsilon \right) + \tfrac{2}{5\epsilon} \divg \intv \hat{B} \mathcal{L}f_\epsilon \bdv   & =  \epsilon \sdot \tfrac{2}{5} j_\epsilon \sdot E_\epsilon.
\end{aligno}
For the intergration term in \eqref{nsp-app},  recalling that
\[ \reps \divg \intv \hat{A} \mathcal{L}f_\epsilon \bdv = \reps \divg \intv \hat{A} \mathcal{L}f_\epsilon^\perp \bdv, \]
Based on \eqref{estwhole},     we can infer that
\begin{align}
\label{estintegrationab}
\int_0^\infty \|\reps \divg \intv \hat{A} \mathcal{L}f_\epsilon(\tau) \bdv\|_{H^{s-1}_x}^2 \bd \tau \le C_0.
\end{align}
With the help of \eqref{estlimitj} and \eqref{estintegrationab}, we can infer that
\begin{align}
\label{estConvergencerhotheta}
 \nabla_x (\rho_\epsilon + \theta_\epsilon  ) \to \rho + \theta= 0, ~~\text{ in the distribution sense. }
\end{align}
Now we can try to deduce the equation of $u$. Let $\mathbf{P}$ be  the Leray projection operator on torus, from   \eqref{fluid-app},  it follows that
\begin{aligno}
\label{nsp-app-4}
\partial_t \mathbf{P} u_\epsilon + \reps \mathbf{P}\left( \divg \intv \hat{A} \mathcal{L}f_\epsilon \bdv \right)   & = \mathbf{P} \left(   n_\epsilon E_\epsilon +   j_\epsilon\timess B_\epsilon \right).
\end{aligno}
Based on the first equation of \eqref{vmbtwo-rewrite}, we can represent $\mathcal{L}(f_\epsilon)$ like this:
\begin{align*}
     \reps \mathcal{L}(f_\epsilon)     & = -\vdot f_\epsilon - \epsilon \partial_t f_\epsilon +  \Gamma(f_\epsilon, f_\epsilon) +   \epsilon  E_\epsilon\sdot v \sdot  h_\epsilon -  ( \epsilon E_\epsilon +   v \times B_\epsilon) \sdot \nabla_v  h_\epsilon    \\
     & =    {\Gamma}(f_\epsilon,f_\epsilon) - \vdot   f_\epsilon  + R_1(\epsilon).
\end{align*}
By simple calculation (see \cite{diogosrm-2019-vmb-fluid,bgl1993convergence,bgl1991formal}),  it follows that
\begin{align}
\label{aaa}
  \intv \hat{A} \sdot \reps \mathcal{L}f_\epsilon \bdv = u_\epsilon \otimes u_\epsilon - \tfrac{|u_\epsilon|^2}{3} \mathbf{I} - \mu \left(\nabla_x u_\epsilon + \nabla^T_\epsilon u_\epsilon -\tfrac{2}{3} \divg u_\epsilon \mathbf{I}\right)- R_f(\epsilon)
\end{align}
with
\[R_f(\epsilon) :=  \intv \hat A \sdot \left( R_1(\epsilon) -  \vdot f^\perp_\epsilon +  {\Gamma}(f_\epsilon^\perp,f_\epsilon) + {\Gamma}(f_\epsilon,f_\epsilon^\perp) \right) \bdv,  \]
and
\[   \mu = \tfrac{1}{15} \sum\limits_{ 1 \le i \le 3 \atop 1 \le j \le 3}\intv A_{ij}\hat{A}_{ij}\bdv. \]
According to \eqref{estwhole},  the microscopic part is $O(\epsilon)$ in $H^s_x$ sense. Thus,    in the distributional sense
\[ R_f(\epsilon) \to 0. \]
Based on \eqref{estwhole}, \eqref{estlimitj}, \eqref{estintegrationab} and \eqref{nsp-app-4},  it follows that
\begin{align}
\partial_t \mathbf{P} \mathbf{u}_\epsilon  \in H^{s-1}_{x}.
\end{align}
By  Aubin-Lions-Simon theorem (see \cite{tools-ns}), we can infer the following strong convergence with time:
\begin{align}
\label{strongu}
\mathbf{P} \mathbf{u}_\epsilon  \in C((0,+\infty;H^{s-1}_{x});~~ \mathbf{P} \mathbf{u}_\epsilon \to    \mathbf{u},~~  \text{in}~~ C((0,+\infty;H^{s-1}_{x}).
\end{align}
Then according to \eqref{estwhole} and \eqref{aaa},   in the distributional sense,
\begin{aligno}
& \mathbf{P} u_\epsilon \to u,~~\reps \mathbf{P}\left( \divg \intv \hat{A} \mathcal{L}h_\epsilon \bdv \right)  \to u \sdot \nabla u - \mu \Delta u.
\end{aligno}
Finally, from  \eqref{estconvergenceMarco}, \eqref{estfluid-j} and \eqref{estlimitn}, we can finally deduce that
\begin{align}
\label{fluidlimitu}
\partial_t u + u\sdot\nabla u - \mu\Delta u + \nabla P =        \curl B \timess B.
\end{align}

{\bf Step 4, the limiting equation of $\theta$.}

By the similar way of deducing \eqref{aaa}, for the approximate temperature equation, we can infer that
\begin{align}
\label{fluid-tem}
\partial_t \left(\tfrac{3}{5}\theta_\epsilon - \tfrac{2}{5}\rho_\epsilon \right)  + \divg(u_\epsilon \theta)- \kappa \Delta \theta_\epsilon    = \epsilon \tfrac{2}{5} j_\epsilon \sdot E_\epsilon + \divg R_\theta(\epsilon),
\end{align}
with
\begin{align}
\label{bbb}
  \tfrac{2}{5}   \intv \hat{B} \sdot \reps \mathcal{L} h_\epsilon \bdv = u_\epsilon\sdot \theta_\epsilon - \kappa \nabla \theta_\epsilon -  R_\theta(\epsilon)
\end{align}
and
\[R_\theta(\epsilon) :=  \tfrac{5}{2}\intv \hat B \sdot \left( R_1(\epsilon) -  \vdot f^\perp_\epsilon +  {\Gamma}(f_\epsilon^\perp,f_\epsilon) + {\Gamma}(f_\epsilon,f_\epsilon^\perp) \right) \bdv,~~\kappa = \tfrac{2}{15} \sum\limits_{ 1 \le i \le 3  }\intv B_{i}\hat{B}_{i}\bdv. \]
By the similar way of deducing \eqref{strongu}, we can infer that
\begin{align}
\label{strongtheta}
\left(\tfrac{3}{5}\theta_\epsilon - \tfrac{2}{5}\rho_\epsilon \right)  \in C((0,+\infty;H^{s-1}_{x});~~ \left(\tfrac{3}{5}\theta_\epsilon - \tfrac{2}{5}\rho_\epsilon \right) \to    \theta,~~  \text{in}~~ C((0,+\infty;H^{s-1}_{x}).
\end{align}
Based on \eqref{estconvergenceMarco} and \eqref{estlimitj}, the right hand of \eqref{fluid-tem} will go to zero in the distributional sense. Finally,  we have
\begin{align}
\partial_t \theta + u \sdot\nabla \theta - \kappa \Delta \theta =0.
\end{align}

{\bf Step 5, the Ohm's law.}

Based on the previous analysis in this section, \eqref{fluid-app} turns to
\begin{align}
\label{nsfp-c}
\begin{cases}
\partial_t u + u\sdot \nabla u - \nu \Delta u + \nabla P = (\curl B) \timess B,\\
\partial_t \theta + u \sdot\nabla \theta - \kappa \Delta \theta =0,\\
\divg u = \divg B= 0,~~ \rho + \theta =0,\\
\partial_t B + \curl E =0.
\end{cases}
\end{align}
We need to represent $E$ in another way. This useful relation hides in the Ohm's law. From \eqref{equationJtilde},
\begin{align*}
  j_\epsilon =  -\epsilon^2 \partial_t \tilde{j}_\epsilon\ + \sigma   E_\epsilon-  \divg  \intv \tilde{v}\otimes v  h_\epsilon  \bdv   + \epsilon   \intv \tilde v  N_1 \bdv .
\end{align*}
For the last two terms in the above equation, recalling the microscopic parts of $f_\epsilon$ and $h_\epsilon$ are $O(\epsilon)$, it follows that
\begin{aligno}
\label{estlimitjj}
 \divg  \intv \tilde{v}\otimes v  h_\epsilon  \bdv & = \divg  \intv \tilde{v}\otimes v  h_\epsilon^\perp  \bdv +\sigma\nabla_x n_\epsilon= \sigma\nabla_x n_\epsilon + O(\epsilon),\\
 \epsilon   \intv \tilde v  N_1 \bdv &  =-   \intv  \left(   v\times B_\epsilon   \sdot  \nabla_v f_\epsilon  \right)  \tilde{v} \bdv + n_\epsilon \intv \Gamma(1,\bdp f_\epsilon) \tilde{v}\bdv + R_4(\epsilon),
\end{aligno}
with
\[ R_4(\epsilon) =  \epsilon\intv   \left( E_\epsilon    \sdot \nabla_v (\m f_\epsilon) \right) \tilde{v} \bd v  +   \intv  \Gamma( h_\epsilon^\perp,f_\epsilon)\tilde{v}\bdv +  \intv  \Gamma( h_\epsilon,f_\epsilon^\perp)\tilde{v}\bdv.  \]
According to  \eqref{estconvergenceMarco},  \eqref{estwhole} and \eqref{estlimitn},   in the distributional sence:
\begin{align}
\divg  \intv \tilde{v}\otimes v  h_\epsilon  \bdv  \to 0,~~\epsilon   \intv \tilde v  N_1 \bdv    \to  \sigma u\timess B.
\end{align}
Based on \eqref{estfluid-j} and \eqref{estlimitj}, in the distributional sense, we have
\begin{align}
\label{estohm}
 j_\epsilon \to j= \curl B=\sigma (    E  +   u\timess B).
\end{align}
Then \eqref{nsfp-c} becomes to MHD system
\begin{align}
\label{mhd}
\begin{cases}
\partial_t u + u\sdot \nabla u - \nu \Delta u + \nabla P = (\curl B) \timess B,\\
\partial_t \theta + u \sdot\nabla \theta - \kappa \Delta \theta =0,\\
\divg u = \divg B= 0,~~ \rho + \theta =0,\\
\partial_t B  - \tfrac{1}{\sigma} \Delta B = \curl(u \times B) .
\end{cases}
\end{align}

\appendix

\section{Formal derivation}
\label{sec-formal}

%\bibliographystyle{jbst}
%\bibliography{uq,be,limits,books,my}

\begin{thebibliography}{10}

\bibitem{twonstomhd}
D.~Ars\'{e}nio, S.~Ibrahim and N.~Masmoudi,
\newblock A derivation of the magnetohydrodynamic system from
  {N}avier-{S}tokes-{M}axwell systems.
\newblock \emph{Arch. Ration. Mech. Anal.} \textbf{216} (2015), 767--812.

\bibitem{diogosrm-2019-vmb-fluid}
D.~Ars\'{e}nio and L.~Saint-Raymond.
\newblock \emph{From the {V}lasov-{M}axwell-{B}oltzmann system to
  incompressible viscous electro-magneto-hydrodynamics. {V}ol. 1}.
\newblock EMS Monographs in Mathematics. European Mathematical Society (EMS),
  Z\"{u}rich (2019).

\bibitem{bgl1991formal}
C.~Bardos, F.~Golse and C.~D. Levermore,
\newblock Fluid dynamic limits of kinetic equations. {I}. {F}ormal derivations.
\newblock \emph{J. Statist. Phys.} \textbf{63} (1991), 323--344.

\bibitem{bgl1993convergence}
C.~Bardos, F.~Golse and C.~D. Levermore,
\newblock Fluid dynamic limits of kinetic equations. {II}. {C}onvergence proofs
  for the {B}oltzmann equation.
\newblock \emph{Comm. Pure Appl. Math.} \textbf{46} (1993), 667--753.

\bibitem{bu1991}
C.~BARDOS and S.~UKAI,
\newblock The classical incompressible navier-stokes limit of the boltzmann
  equation.
\newblock \emph{Mathematical Models and Methods in Applied Sciences}
  \textbf{01} (1991), 235--257.

\bibitem{tools-ns}
F.~Boyer and P.~Fabrie.
\newblock \emph{Mathematical tools for the study of the incompressible
  {N}avier-{S}tokes equations and related models}, \emph{Applied Mathematical
  Sciences}, vol. 183.
\newblock Springer, New York (2013).

\bibitem{briant-2015-be-to-ns}
M.~Briant,
\newblock From the {B}oltzmann equation to the incompressible {N}avier-{S}tokes
  equations on the torus: a quantitative error estimate.
\newblock \emph{J. Differential Equations} \textbf{259} (2015), 6072--6141.

\bibitem{diperna-lions1989cauchy}
R.~J. DiPerna and P.-L. Lions,
\newblock On the {C}auchy problem for {B}oltzmann equations: global existence
  and weak stability.
\newblock \emph{Ann. of Math. (2)} \textbf{130} (1989), 321--366.

\bibitem{dlyz2017cmp}
R.~Duan, Y.~Lei, T.~Yang and H.~Zhao,
\newblock The {V}lasov-{M}axwell-{B}oltzmann system near {M}axwellians in the
  whole space with very soft potentials.
\newblock \emph{Comm. Math. Phys.} \textbf{351} (2017), 95--153.

\bibitem{gsrm2004}
F.~Golse and L.~Saint-Raymond,
\newblock The {N}avier-{S}tokes limit of the {B}oltzmann equation for bounded
  collision kernels.
\newblock \emph{Invent. Math.} \textbf{155} (2004), 81--161.

\bibitem{uvpb2020}
M.~Guo, N.~Jiang and Y.-L. Luo,
\newblock From vlasov-poisson-boltzmann system to incompressible
  navier-stokes-fourier-poisson system: convergence for classical solutions.
\newblock \emph{arXiv preprint arXiv:2006.16514}  (2020).

\bibitem{guo-2003-vmb-invention}
Y.~Guo,
\newblock The {V}lasov-{M}axwell-{B}oltzmann system near {M}axwellians.
\newblock \emph{Invent. Math.} \textbf{153} (2003), 593--630.

\bibitem{guo2006NSlimit}
Y.~Guo,
\newblock Boltzmann diffusive limit beyond the {N}avier-{S}tokes approximation.
\newblock \emph{Comm. Pure Appl. Math.} \textbf{59} (2006), 626--687.

\bibitem{vmbtonsp}
J.~Jang,
\newblock Vlasov-{M}axwell-{B}oltzmann diffusive limit.
\newblock \emph{Arch. Ration. Mech. Anal.} \textbf{194} (2009), 531--584.

\bibitem{jama2012siam}
J.~Jang and N.~Masmoudi,
\newblock Derivation of {O}hm's law from the kinetic equations.
\newblock \emph{SIAM J. Math. Anal.} \textbf{44} (2012), 3649--3669.

\bibitem{vmbtonswu}
N.~Jiang and Y.-L. Luo,
\newblock From {V}lasov-{M}axwell-{B}oltzmann system to two-fluid
  incompressible {N}avier-{S}tokes-{F}ourier-{M}axwell system with ohm's law:
  convergence for classical solutions.
\newblock \emph{arXiv preprint arXiv:1905.04739}  (2019).

\bibitem{vmbtonswh}
N.~Jiang, Y.-L. Luo and T.-F. Zhang,
\newblock Incompressible navier-stokes-fourier-maxwell system with ohm's law
  limit from vlasov-maxwell-boltzmann system: Hilbert expansion approach.
\newblock \emph{arXiv preprint arXiv:2007.02286}  (2020).

\bibitem{ns-limit-2018}
N.~Jiang, C.-J. Xu and H.~Zhao,
\newblock Incompressible {N}avier-{S}tokes-{F}ourier limit from the {B}oltzmann
  equation: classical solutions.
\newblock \emph{Indiana Univ. Math. J.} \textbf{67} (2018), 1817--1855.

\bibitem{jz2020vpbconvergence}
N.~Jiang and X.~Zhang,
\newblock Sensitivity analysis and incompressible {N}avier-{S}tokes-{P}oisson
  limit of {V}lasov-{P}oisson-{B}oltzmann equations with uncertainty.
\newblock \emph{arXiv preprint arXiv:2007.00879}  (2020).

\bibitem{lm2010soft}
C.~D. Levermore and N.~Masmoudi,
\newblock From the {B}oltzmann equation to an incompressible
  {N}avier-{S}tokes-{F}ourier system.
\newblock \emph{Arch. Ration. Mech. Anal.} \textbf{196} (2010), 753--809.

\bibitem{vpb2020limit-spectrum}
H.-L. Li, T.~Yang and M.~Zhong,
\newblock Diffusion limit of the vlasov-poisson-boltzmann system.
\newblock \emph{arXiv preprint arXiv:2007.01461}  (2020).

\bibitem{lm2001acoustic}
P.-L. Lions and N.~Masmoudi,
\newblock From the {B}oltzmann equations to the equations of incompressible
  fluid mechanics. {I}, {II}.
\newblock \emph{Arch. Ration. Mech. Anal.} \textbf{158} (2001), 173--193,
  195--211.

\bibitem{masmoudi-srm2003stokesfourier}
N.~Masmoudi and L.~Saint-Raymond,
\newblock From the {B}oltzmann equation to the {S}tokes-{F}ourier system in a
  bounded domain.
\newblock \emph{Comm. Pure Appl. Math.} \textbf{56} (2003), 1263--1293.

\bibitem{mischler2010asens}
S.~Mischler,
\newblock Kinetic equations with {M}axwell boundary conditions.
\newblock \emph{Ann. Sci. \'Ec. Norm. Sup\'er. (4)} \textbf{43} (2010),
  719--760.

\bibitem{mouhot-2006-homogeneous}
C.~Mouhot,
\newblock Rate of convergence to equilibrium for the spatially homogeneous
  {B}oltzmann equation with hard potentials.
\newblock \emph{Comm. Math. Phys.} \textbf{261} (2006), 629--672.

\bibitem{mouhotneumann-2006-decay}
C.~Mouhot and L.~Neumann,
\newblock Quantitative perturbative study of convergence to equilibrium for
  collisional kinetic models in the torus.
\newblock \emph{Nonlinearity} \textbf{19} (2006), 969--998.

\bibitem{saint2009book}
L.~Saint-Raymond.
\newblock \emph{Hydrodynamic limits of the {B}oltzmann equation}, \emph{Lecture
  Notes in Mathematics}, vol. 1971.
\newblock Springer-Verlag, Berlin (2009).

\bibitem{sr2006-vmb}
R.~M. Strain,
\newblock The {V}lasov-{M}axwell-{B}oltzmann system in the whole space.
\newblock \emph{Comm. Math. Phys.} \textbf{268} (2006), 543--567.

\bibitem{twovpblimits}
Y.~Wang,
\newblock The diffusive limit of the vlasov–boltzmann system for binary
  fluids.
\newblock \emph{SIAM Journal on Mathematical Analysis} \textbf{43} (2011),
  253--301.

\bibitem{mvmbtothree}
X.~Zhang,
\newblock The diffusive limits of two species {V}lasov-{M}axwell-{B}oltzmann
  equations.
\newblock \emph{arXiv preprint arXiv:2103.16881}  (2021).

\end{thebibliography}
\end{document}